\newtheorem{thm}{Theorem}[section]
\newtheorem{lem}[thm]{Lemma}
\newtheorem{cor}[thm]{Corollary}
\newtheorem{Def}[thm]{Definition}
\newtheorem{prop}[thm]{Proposition}
\newcommand\scalemath[2]{\scalebox{#1}{\mbox{\ensuremath{\displaystyle #2}}}}
\title{\bf \large On the divisibility of H-shape trees and their spectral determination}
\author{
{\small Zhen Chen$^{a}$, \ \  Jianfeng Wang$^{a,}$\footnote{Corresponding author.
\newline{\it \hspace*{5mm}Email addresses:} chenzzhen@126.com(Z. Chen), jfwang@sdut.edu.cn (J.F. Wang), mbrunett@unina.it, fbelardo@gmail.com(F.Belardo)}\;, \ \ Maurizio Brunetti$^b$, \ \ Francesco Belardo$^b$}\\[2mm]
\footnotesize $^a$School of Mathematics and Statistics, Shandong University of Technology, Zibo 255049, China\\
\footnotesize $^b$Department of Mathematics and Applications, University of Naples Federico II, Naples, Italy}
\date{ }
\begin{document}

\maketitle

\begin{abstract}

A graph $G$ is divisible by a graph $H$ if the characteristic polynomial of $G$ is divisible by that of $H$.  In this paper, a necessary and sufficient condition for recursive graphs to be divisible by a path is used to show that the H-shape graph  $P_{2,2;n-4}^{2,n-7}$, known to be (for $n$ large enough) the minimizer of the spectral radius among the graphs of order $n$ and diameter $n-5$, is determined by its adjacency spectrum if and only if $n \neq 10,13,15$. \\

\noindent {\it AMS classification:} 05C50\\[1mm]

\noindent {\it Keywords}: Adjacency matrix; Spectral determination;  Characteristic polynomial; Divisibility; Spectral invariant.

\end{abstract}

\baselineskip=0.202in

\section{\large Introduction}\label{S1}
In this paper, we only deal with undirected simple graphs.
Let $G = (V_G,E_G)$ be a graph with order $\nu_G$, size $\varepsilon_G$ and adjacency matrix $A(G)$. The {\it spectrum} ${\rm sp}(G)$ of $G$ is the multiset of eigenvalues of $A(G)$, i.e.\ the roots of  the the characteristic polynomial of $G$ $\phi(G) = \phi(G,\lambda) :=\det(\lambda I - A(G))$. The matrix $A(G)$ is symmetric; thus, its eigenvalues are real and we denote them by $\lambda_1(G) \geqslant \lambda_2(G) \geqslant  \cdots \geqslant \lambda_{\nu_G}(G)$. The maximum eigenvalue $\lambda_1(G)$ of $G$ is called the {\it index} of $G$ and it is often denoted by $\rho(G)$. From the Perron-Froebenius theorem it follows that $\rho(G)$ has multiplicity $1$ and it is also equal to the spectral radius of $G$, i.e.\ the number $\max \{ \lvert \lambda_i (G) \rvert \mid 1 \leqslant i \leqslant \nu_G \}$.

Two non-isomorphic graphs $G$ and $H$ are said to be {\it ($A$-)cospectral mates} if they share the same spectrum. Clearly, this happens if and only if $\phi(G) = \phi(H)$. A graph $G$ is said to be {\it determined by its spectrum} (or DS for short) when $G$ has no cospectral mates or, equivalently, when ${\rm sp}(G) = {\rm sp}(H)$ only if $G$ and $H$ are isomorphic.

An important research area of spectral graph theory is devoted to establish which graphs are
determined by their spectrum. Along the years, this problem has attracted the interest of many researchers \cite{dam,dam1}.

Despite the large amount of achievements already available in literature, there is still much work to do: for instance,
a celebrated Schwenk's theorem states that almost trees are cospectral  \cite{Sch-tree}, but the quest launched by  van Dam and Haemers \cite{dam} for the identification of the trees which are DS is still on. The problem has been attacked for  the T-shaped tree \cite{wang1}, the starlike trees \cite{gha1,lep-gut}, the daggers \cite{lhl} and the trees with spectral radius at most $\sqrt{2+\sqrt{5}}$ \cite{gha-omi-tay,she-hou-zhang}. Few partial results have been obtained for
the H-shape graphs \cite{hu, liu_huang} (see the next paragraph for the definitions).

Let $P_n$ and $C_n$ respectively denote the path and the cycle with $n$ vertices. We label their vertices by $0,1, \dots, n-1$ assuming that consecutive integers correspond to adjacent vertices.
For $0 < m_1 < \cdots < m_t < r-1$, we denote by
$P_{n_1,n_2,\dots,n_t;r}^{m_1, m_2, \dots,m_t}$ the graph obtained from $P_r$ by attaching at its vertex $m_i$ a pendant path of $n_i$ edges  for each $i = 1,2,\dots,t$. Similarly, for $0 \leqslant m_1 < \cdots < m_t \leqslant r-1$,
we denote by $C_{n_1,n_2,\dots,n_t;r}^{m_1,m_2,\dots,m_t}$ the graph obtained from $C_r$ by attaching at its vertex $m_i$ a pendant path of $n_i$ edges  for each $i = 1,2,\dots,t$ (see Fig.~\ref{Fig2}).
After \cite{woo}, the graphs of type $P_{n_1,n_2,\dots,n_t;r}^{m_1, m_2, \dots,m_t}$  and  $C_{n_1,n_2,\dots,n_t;r}^{m_1,m_2,\dots,m_t}$ are respectively known as {\it open} and {\it closed quipus}, and they can be structurally characterized as the trees (resp.,\ unicyclic graphs) with maximum vertex degree $3$ such that the vertices of degree $3$ all lie on a path (resp.,\ a cycle). Open quipus with $t=1$ are also known as T-shape graphs; whereas open quipus with $t=2$ are sometimes called H-shape or $\Pi$-shape trees (see Fig.~\ref{Fig1}). Closed quipus with just one pendant path are called lollipops or tadpole graphs. \\

In this paper we focus on the spectral determination of the graphs in the family
\begin{equation}\label{accan}  \mathscr{H}:= \{ H_n\}_{n\geqslant 10}, \qquad \text{where $H_n =  P_{2,2,n-4}^{2,n-7}$}.
\end{equation}
Our works is part of a larger project concerning the spectral determinations of the graphs $G_{n,D}$'s minimizing the spectral radius in the set of graphs with $n$ vertices and diameter $D$. As  matter of fact, van Dam and Kooij
\cite{dam2} conjectured that the open quipu \[ OQ_{n,e}= P_{\lfloor \frac{e-1}{2} \rfloor, \lceil \frac{e-1}{2} \rceil; n-e+1}^{\lfloor \frac{e-1}{2} \rfloor, n-e- \lceil \frac{e-1}{2} \rceil}\] is one of those minimizer for $D=n-e$ and $n$ large enough, identifying  $G_{n,D}$ when $D \in \{1,2,\lfloor \frac{n}{2} \rfloor, n-3,n-2,n-1\}$.

After \cite{cio-dam-koo-lee,dam2,yuan-shao-liu}, we know that the van-Dam-Kooji conjecture holds
for $1\leq e \leq 5$, whereas it fails for $e\geqslant 6$ (see \cite{cio-dam-koo-lee,lan-lu,lan-lu-shi,lan-shi,sun}).

The spectral determination of $G_{n,D}$ with $D \in \{n-1,n-2,n-3,n-4\}$ has been performed in \cite{gao-wang, gha-omi-tay}. Next theorem is our main result and, as already announced, concerns
\[ H_n=P_{2,2;n-4}^{2,n-7}= OQ_{n,5}= G_{n,n-5}. \]
\begin{thm}\label{main}
The only graphs in the set $\mathscr{H}= \left\{ H_n \mid n \geqslant 10 \right\}$ which are not  DS are $H_{10},H_{13}$ and $H_{15}$.
\end{thm}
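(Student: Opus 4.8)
The plan is to fix an arbitrary graph $G$ with $\phi(G)=\phi(H_n)$ and to compress its possible structure, step by step, through increasingly fine spectral invariants until only $H_n$ (and, for $n\in\{10,13,15\}$, one extra graph) survives. First I would extract the coarse data carried by the spectrum: $\nu_G=n$, $\varepsilon_G=n-1$, the bipartiteness of $G$ (the spectrum of the tree $H_n$ is symmetric about $0$), and the power sums $\operatorname{tr}A(G)^{2k}$ for small $k$. Since a cospectral mate of a tree contains no quadrilateral, $\operatorname{tr}A(G)^4=2\sum_i d_i(G)^2-2\varepsilon_G$, and $\operatorname{tr}A(H_n)^4=6n-2$, so $\sum_i d_i(G)^2=4n-2$; together with $\sum_i d_i(G)=2n-2$ and $\nu_G=n$ this forces the degree sequence $(3^2,\,2^{\,n-6},\,1^4)$. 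A bipartite graph with $n$ vertices and $n-1$ edges is either a tree or is disconnected with positive, hence even, cycle rank. In the tree case the degree sequence makes $G$ an H-shape tree $T(a,b;c;d,e)$: two vertices of degree $3$ joined by a path of length $c\ge1$, bearing pendant paths of lengths $a,b\ge1$ at one end and $d,e\ge1$ at the other, with $a+b+c+d+e=n-1$. One more power sum, $\operatorname{tr}A(G)^6$ (equivalently the next coefficients of $\phi(G)$, which count short subtrees), should already pin the unordered pairs $\{a,b\}$ and $\{d,e\}$ down to a short list of small values, leaving $c$ essentially free.

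For the surviving tree candidates the crucial tool is the announced necessary-and-sufficient condition for a recursive graph to be divisible by a path. For each admissible shape the graphs $T(a,b;c;d,e)$ form a recursive family in the parameter $c$, and so does $\{H_n\}_n$ itself, the long central path being its recursion variable. Expanding $\phi$ through the pendant-path reduction $\phi(G^{(\ell)})=p_\ell\,\phi(G)-p_{\ell-1}\,\phi(G-r)$ (where $p_\ell=\phi(P_\ell)$ and $G^{(\ell)}$ is $G$ with a pendant path of $\ell$ edges glued at $r$), the characteristic polynomial of an H-shape tree becomes a fixed bilinear combination of the $p_i$'s whose only long ingredients are $p_c$ and $p_{c\pm1}$. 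The criterion then computes, for each family, the exact set of $k$ with $\phi(P_k)\mid\phi(\cdot)$ as an arithmetic condition on the long parameter. Since cospectral graphs share that set, equating the two descriptions yields sharp Diophantine relations linking $c$ to $n$; combined with $a+b+c+d+e=n-1$ and the short-pendant constraints, they admit only the trivial solution $(a,b,c,d,e)=(2,2,n-9,2,2)$, i.e.\ $G\cong H_n$, apart from isolated coincidences --- and determining exactly which $n$ allow a coincidence is where $n=10,13,15$ appears. For those three values one then exhibits the extra cospectral graph and checks the equality of characteristic polynomials by the same reduction. As a consistency check in this direction, a short computation shows that $2\in\operatorname{sp}(H_n)$ precisely when $n=15$, which singles $n=15$ out and, via the factorization $\phi(C_{2m})=(\lambda^2-4)\,\phi(P_{m-1})^2$, already points at its mate.

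It then remains to handle disconnected $G$. Here $\rho(G)=\rho(H_n)$; since $H_n\supsetneq\widetilde{E}_6=T(2,2,2)$ we have $\rho(H_n)>2$, while $\rho(H_n)\to2^+$ (the $H_n$ being the spectral-radius minimizers for diameter $n-5$), so $\rho(H_n)<\frac{3}{2}\sqrt2$ for all but finitely many $n$, the remaining small $n$ being inspected by hand. Hence every component of $G$ has spectral radius at most $\frac{3}{2}\sqrt2$ and, by the Woo--Neumaier classification, is of a very restricted type --- essentially an open or closed quipu, up to a bounded list of exceptions; and since a disconnected bipartite graph with $\varepsilon_G=\nu_G-1$ has cycle rank one less than its number of components, exactly one component can carry a cycle and that cycle is even. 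No component can be a pure cycle, because $\phi(C_{2m})$ vanishes at $2$, which for $n\neq15$ is not a root of $\phi(H_n)$ (and for $n=15$ this cycle-component case is exactly what produces the mate). Feeding in $\varepsilon_G=n-1$, bipartiteness, the degree-sequence and low-moment data, and Smith's theorem for the components of spectral radius $\le2$, one reduces $G$ to a single unicyclic closed-quipu component $U$ together with a disjoint union of short paths; a last application of the path-divisibility criterion --- deciding for which $n$ the quotient $\phi(H_n)/\phi(U)$ can be a product of path characteristic polynomials --- rules this out for $n\notin\{10,13,15\}$ and yields the remaining exceptional mates.

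The genuine obstacle is the middle step: turning the equality $\phi(T(a,b;c;d,e))=\phi(H_n)$ into an exact arithmetic statement and proving that its only solutions are the trivial identification and the three coincidences. Everything there hinges on having the recursive-graph/path-divisibility criterion sharp enough to detect those coincidences; by comparison, the enumeration of admissible short pendant pairs, the bookkeeping for the disconnected sub-cases, and the final verification that the claimed graphs really are cospectral mates for $n=10,13,15$ are routine, if somewhat lengthy.
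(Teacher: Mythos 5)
Your opening reduction contains a genuine error: it is \emph{not} true that a cospectral mate of a tree contains no quadrilateral (e.g.\ $K_{1,4}$ and $C_4\cup K_1$ are cospectral), and $\operatorname{tr}A(G)^4=2\sum_i d_i^2-2\varepsilon_G+8\,n_4(G)$ cannot separate $\sum_i d_i^2$ from the number $n_4(G)$ of quadrangles. Consequently the degree sequence is \emph{not} forced to be $(3^2,2^{n-6},1^4)$; the paper has to invoke the Liu--Huang lemma, which (after a more delicate joint analysis) leaves three possible degree sequences, including $(1^2,2^{n-2})$ with one quadrangle (leading to a candidate of the form path $\cup$ cycle) and $(0^1,1^1,2^{n-3},3^1)$ (leading to $P_1\cup$ lollipop). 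Both of these extra branches must be, and in the paper are, explicitly killed (via the fact that no cycle divides $H_n$ for $n\neq 15$, and via a $3$-matching count, respectively); your proposal never sees them.

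The step you yourself flag as ``the genuine obstacle'' is also the one that does not go through as described. The set of paths dividing $\phi(H_n)$ is extremely small (only $P_1$, $P_2$ and $P_5$ ever occur, by the paper's Proposition~\ref{divi2}), so ``which paths divide $\phi(G)$'' is far too coarse an invariant to pin the five parameters of an H-shape tree down to $(2,2,n-9,2,2)$: many non-isomorphic H-shape trees of the same order share the same (often empty) set of path divisors. The paper takes a different and actually effective route for the connected-tree case: a $3$-matching count (Corollary~\ref{3match1}) restricts $T$ to two combinatorial types $\mathcal T_{10}\cup\mathcal T_{11}$, and then $T=H_n$ follows because $H_n$ is the \emph{unique minimizer of the spectral radius} among order-$n$ trees of type $\mathcal T_{10}$ (Proposition~\ref{T8}), with a separate spectral-radius contradiction disposing of $\mathcal T_{11}$. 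The path-divisibility criterion is instead deployed in the disconnected case, and even there it is only one ingredient among many (divisibility by T-shape trees and by cycles, lowest-term computations, and a sequence of spectral-radius estimates for closed quipus), all of which your sketch compresses into a single unproved ``last application of the criterion''. The overall architecture of your plan matches the paper's, but the two load-bearing steps --- the degree-sequence reduction and the identification of the tree component --- are respectively wrong and unsubstantiated.
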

A graph $G$ is said to be {\it divisible} by a graph $H$ if $\phi(H)$ divides  $\phi(G)$.
In some cases divisibility can be useful to establish whether a graph $G$ is DS or not. In fact,
 $G$ is cospectral to the disjoint union of graphs  $\bigsqcup_{i=1}^sG'_s$ if and only if
$\phi (G) = \prod_{i=1}^s \phi(G'_i)$. Moreover, since $\mathbb Z[x]$ is a unique factorization domain, if it happens that, for every connected graph $G'$ dividing $G$,
$\phi(G)/\phi(G')$ is the characteristic polynomial of a graph only if $G'$ is isomorphic to $G$, then $G$ is DS (note that, conventionally, $1$ the characteristic polynomial of the null graph $P_0$, which has no vertices and, {\it a fortiori}, no edges).
This is the motivation for studying in this paper divisibilities graphs.\medskip

We now explain how the paper is organized. In Section 2 we collect the basic tools, most of which are well-known to the scolars. Section 3 is mainly devoted to divisibilities; in particular, we find conditions for sequences of recursive graphs to be divisible by a path and, for a graph in $\mathscr{H}$, to be divisible by a cycle and some specific closed quipus. In Section 4, further spectral properties of  $\mathscr{H}$ and some closed quipus are achieved. They have been selected in view of the proof of Theorem~\ref{main}, given in Section~5.

\begin{figure}
	  	\centering
	  	
  \begin{tikzpicture}[
vertex3_style/.style={fill,circle,inner sep=0pt,minimum size=4pt},
edge_style/.style={ultra thick, black, drop shadow={opacity=0.1}},
edge_style/.style={ultra thick, black, drop shadow={opacity=0.1}},
nonterminal/.style={
rectangle,
minimum size=2mm,
thin,
draw=black,
top color=white, 
bottom color=white!50!white!50, 
font=\itshape
},
]

 \node[nonterminal] (Z) at (-1,2.7) {\footnotesize$P^{n_1,\dots, n_t}_{m_1,\dots,m_t;r}$};
 \node[vertex3_style]   (X1) at (-0.4,0)  [label=below: \footnotesize$0$]{};
\node[vertex3_style]   (X2) at (0.6,0)  [label=below: \footnotesize$m_1$] {};
\node[vertex3_style]   (X3) at (1.6,0)   [label=below: \footnotesize$m_2$]{};
\node[vertex3_style]   (X4) at (2.6,0)  [label=below: \footnotesize$m_t$] {};
\node[vertex3_style]   (X5) at (3.6,0)   [label=below: \footnotesize$r-1$]{};
\path [thick, dotted] (X1) edge (X5);
\node[vertex3_style]   (Y2) at (0.6,.5) {};
\node[vertex3_style]   (Y3) at (1.6,.5) {};
\node[vertex3_style]   (Y4) at (2.6,.5) {};
\node[vertex3_style]   (Z2) at (0.6,1.8) {};
\node[vertex3_style]   (Z3) at (1.6,1.8) {};
\node[vertex3_style]   (Z4) at (2.6,1.8) {};
 \path [thick] (X2) edge (Y2);
 \path [thick] (X3) edge (Y3);
 \path [thick] (X4) edge (Y4);
\path [thick, dotted] (Z2) edge (Y2);
 \path [thick, dotted] (Z3) edge (Y3);
 \path [thick, dotted] (Z4) edge (Y4);
\draw [decorate,decoration={brace,amplitude=7pt,mirror},xshift=0.3pt,yshift=0pt](0.45,1.9)--(0.45,0.4) node[black,midway,xshift=-0.45cm] {\footnotesize $n_1$};
\draw [decorate,decoration={brace,amplitude=7pt,mirror},xshift=0.3pt,yshift=0pt](1.45,1.9)--(1.45,0.4) node[black,midway,xshift=-0.45cm] {\footnotesize $n_2$};
 \draw [decorate,decoration={brace,amplitude=7pt,mirror},xshift=0.3pt,yshift=0pt](2.75,.4)--(2.75,1.9) node[black,midway,xshift=0.45cm] {\footnotesize $n_t$};
	\begin{scope}[xshift=8cm]
 \node[nonterminal] (Z2) at (-2.5,2.7) {\footnotesize$C^{0,n_2,\dots, n_t}_{m_1,\dots,m_t;r}$};
  \def\xmax{2.2}
  \def\ul{0.6}
  \def\R{.8}
  \def\ang{43}
  \coordinate (O) at (0,0.8);
  \coordinate (X) at (\xmax,0);
  \coordinate (R) at (\ang:\R);
 \draw[thick, dotted] (O) circle (\R);
\node[vertex3_style]   (T3) at (0,1.6)  [label=below: \footnotesize$0$]{};
\node[vertex3_style]   (T5) at (-0.8,.8) {};
\node at (1,0.23) {\footnotesize$m_2$};
\node at (-.4,.5) {\footnotesize$m_t$};
\node[vertex3_style]   (T8) at (0.56,0.24) {};
\node[vertex3_style]   (U3) at (0,2.1) {};
\node[vertex3_style]   (V3) at (0,2.9) {};
\path [thick] (T3) edge (U3);
\path [thick, dotted] (U3) edge (V3);
\draw [decorate,decoration={brace,amplitude=6pt,mirror},xshift=0.3pt,yshift=0pt](-0.15,3)--(-0.15,2) node[black,midway,xshift=-0.45cm] {\footnotesize $n_1$};
\node[vertex3_style]   (U8) at (0.96,-.16) {};
\node[vertex3_style]   (V8) at (1.56,-0.76) {};
\path [thick] (T8) edge (U8);
\path [thick, dotted] (U8) edge (V8);
\node[vertex3_style]   (U5) at (-1.4,0.8) {};
\node[vertex3_style]   (V5) at (-2.4,0.8) {};
\path [thick] (T5) edge (U5);
\path [thick, dotted] (U5) edge (V5);
\draw [decorate,decoration={brace,amplitude=6pt,mirror},xshift=0.3pt,yshift=0pt](-1.3,.95)--(-2.5,.95) node[black,midway,yshift=0.45cm] {\footnotesize $n_t$};
\draw [decorate,decoration={brace,amplitude=6pt,mirror},xshift=0.3pt,yshift=0pt](0.8,-.2)--(1.5,-.9) node[black,midway,xshift=-.4cm, yshift=-0.3cm] {\footnotesize $n_2$};
\begin{scope}[xshift=4.5cm, yshift=-.2cm]
\node[vertex3_style]   (H1) at (0,2.8) {};
\node[vertex3_style]   (H2) at (0,2) {};
\node[vertex3_style]   (H3) at (0,1.2) {};
\node[vertex3_style]   (H4) at (0,0) {};
\node[vertex3_style]   (G2) at (-.8,2) {};
\node[vertex3_style]   (I2) at (.8,2) {};
\path [thick] (H1) edge (H3);
\path [thick, dotted] (H3) edge (H4);
\path [thick] (G2) edge (I2);
\end{scope}
\end{scope}

\end{tikzpicture}
\caption{ \label{Fig2}  \small Open quipus, closed quipus and daggers}
\end{figure}
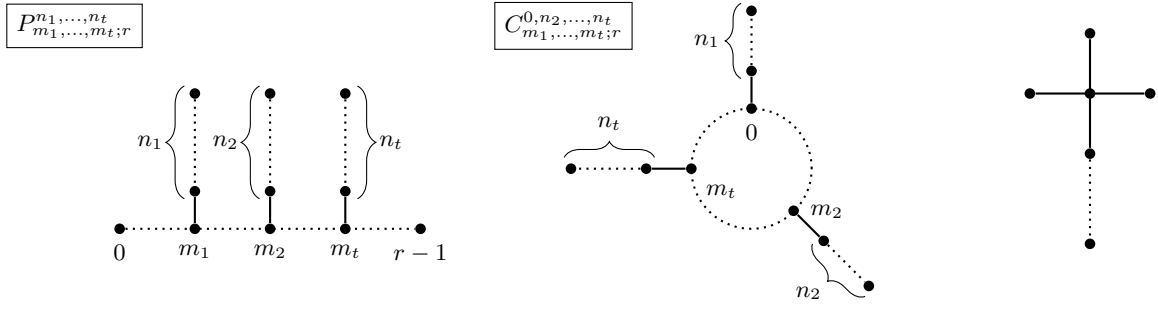
\section{Preliminaries}\label{S2}

\begin{lem}\label{path}{\rm \cite[p. 73]{cve1}} For $n \in \mathbb N$, the following equalities hold.
$$\phi(P_n,\lambda)  = \prod_{i=1}^n \left(\lambda - 2\cos{\frac{\pi i}
{n+1}}\right) = \sum_{r=0}^{\lfloor \frac{n}{2}\rfloor}(-1)^r{n-r \choose r}\lambda^{n-2r}$$
and
$$ \phi(C_n,\lambda)=  \prod_{i=0}^{n-1} \left(\lambda - 2\cos{\frac{2\pi i}
{n}}\right)= -2 +   \sum_{r=0}^{\lfloor \frac{n}{2}\rfloor}(-1)^r \frac{n}{n-r}{n-r \choose r}\lambda^{n-2r}.$$
\end{lem}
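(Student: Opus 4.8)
The plan is to establish four identities: the two product expansions (which amount to listing the eigenvalues of $P_n$ and $C_n$) and the two sum expansions (which amount to computing the coefficients of their characteristic polynomials). I would treat the product formulas by exhibiting eigenvectors directly, and the sum formulas by a three-term recurrence together with elementary binomial identities, linking the cycle to the path through a single edge-deletion relation.

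For the \emph{product formulas} I would first observe that $A(P_n)$ is the tridiagonal $0/1$ matrix and test the candidate eigenvector $v^{(k)}$ with entries $v^{(k)}_j=\sin\frac{kj\pi}{n+1}$ for $j=1,\dots,n$. The identity $\sin\frac{k(j-1)\pi}{n+1}+\sin\frac{k(j+1)\pi}{n+1}=2\cos\frac{k\pi}{n+1}\,\sin\frac{kj\pi}{n+1}$ shows that $A(P_n)v^{(k)}=2\cos\frac{k\pi}{n+1}\,v^{(k)}$, the two boundary rows coming out correctly precisely because the extended entries satisfy $\sin 0=\sin\frac{k(n+1)\pi}{n+1}=0$. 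This yields $n$ distinct eigenvalues $2\cos\frac{k\pi}{n+1}$ $(k=1,\dots,n)$, hence the product form of $\phi(P_n,\lambda)$. For $C_n$ the matrix $A(C_n)$ is circulant, so the vectors $(1,\zeta,\zeta^2,\dots,\zeta^{n-1})$ with $\zeta=e^{2\pi i k/n}$ are eigenvectors with eigenvalue $\zeta+\zeta^{-1}=2\cos\frac{2\pi k}{n}$ $(k=0,\dots,n-1)$, giving the product form of $\phi(C_n,\lambda)$.

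For the \emph{sum formulas} I would expand $\det(\lambda I-A(P_n))$ along its last row to obtain the recurrence $\phi(P_n,\lambda)=\lambda\,\phi(P_{n-1},\lambda)-\phi(P_{n-2},\lambda)$, with base cases $\phi(P_0,\lambda)=1$ and $\phi(P_1,\lambda)=\lambda$. Then I would verify by induction on $n$ that the claimed sum $S_n(\lambda)=\sum_{r=0}^{\lfloor n/2\rfloor}(-1)^r\binom{n-r}{r}\lambda^{n-2r}$ obeys the same recurrence and initial data, the inductive step being exactly Pascal's rule $\binom{n-r}{r}=\binom{n-1-r}{r}+\binom{n-1-r}{r-1}$. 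For the cycle I would use the edge relation $\phi(C_n,\lambda)=\phi(P_n,\lambda)-\phi(P_{n-2},\lambda)-2$, obtained by removing the edge that closes the path into the cycle; substituting the path sum formula and reindexing the $\phi(P_{n-2})$ sum reduces everything to the single identity
\[ \binom{n-r}{r}+\binom{n-1-r}{r-1}=\frac{n}{n-r}\binom{n-r}{r}, \]
which follows from a one-line factorial computation, while the additive $-2$ reappears verbatim.

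The step I expect to be the fussiest is the boundary bookkeeping: making sure the summation limit $\lfloor n/2\rfloor$ is handled uniformly for both parities of $n$, and that the constant term produced by $\phi(P_n)-\phi(P_{n-2})$ when $n$ is even matches the $r=\lfloor n/2\rfloor$ term of the cycle sum, so that the extra $-2$ is neither doubled nor lost. A cleaner alternative for the two sum formulas would invoke the Sachs coefficient theorem, using that $P_n$ has exactly $\binom{n-r}{r}$ $r$-matchings and $C_n$ has $\frac{n}{n-r}\binom{n-r}{r}$, plus the unique Hamiltonian cycle of $C_n$ contributing $-2$ to the constant coefficient; I would fall back on this route if the recurrence manipulations became unwieldy, noting however that it presupposes the coefficient theorem, which is not yet available at this point of the paper.
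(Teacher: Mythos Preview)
Your proposal is correct, but there is nothing to compare it against: the paper does not prove this lemma. It is stated with a bare citation to \cite[p.~73]{cve1} and used as a standard fact, so the ``paper's own proof'' is simply a reference to the literature.

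That said, the argument you outline is the textbook one and would work. The eigenvector computations for $P_n$ (sine vectors) and $C_n$ (circulant structure) are exactly the standard derivations. The three-term recurrence for $\phi(P_n)$ is precisely \eqref{path-poly} in the paper, and your edge-deletion identity $\phi(C_n)=\phi(P_n)-\phi(P_{n-2})-2$ is the Schwenk formula \eqref{schwenky2} applied to any edge of $C_n$. The binomial identity you isolate, $\binom{n-r}{r}+\binom{n-1-r}{r-1}=\tfrac{n}{n-r}\binom{n-r}{r}$, is indeed a one-liner. Your caution about the boundary term at $r=\lfloor n/2\rfloor$ for even $n$ is well placed but resolves cleanly once you note that $\phi(P_{n-2})$ contributes no term at that index, so the constant term of $\phi(C_n)+2$ comes entirely from $\phi(P_n)$ and equals $(-1)^{n/2}\cdot 2$, matching $\tfrac{n}{n-r}\binom{n-r}{r}=2$ at $r=n/2$.

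Your fallback via the Sachs coefficient theorem is also fine; in fact the paper invokes that theorem a few lines later (just before Proposition~\ref{ahah}), so there would be no circularity in using it here.
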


\begin{lem}\label{pathHvD} {\rm \cite[Proposition 1]{dam}} The path $P_n$ is DS for every positive integer $n$.
\end{lem}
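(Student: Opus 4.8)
Let $G$ be a graph cospectral with $P_n$; we must show $G\cong P_n$. The plan is to extract the cheap spectral invariants first and then use the explicit form of $\phi(P_n)$ from Lemma~\ref{path} to recover the degree sequence of $G$. Cospectral graphs agree on the number of vertices, on $\mathrm{tr}\,A^2=\sum_i\lambda_i^2=2\varepsilon$, and on whether the spectrum is symmetric about $0$; hence $\nu_G=n$, $\varepsilon_G=n-1$, and — since by Lemma~\ref{path} the multiset $\{2\cos\frac{\pi i}{n+1}\mid 1\leq i\leq n\}$ of eigenvalues of $P_n$ is symmetric about $0$ (pair $i$ with $n+1-i$) — the graph $G$ is bipartite.

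The first real step is to show $G$ is connected, hence a tree. A graph with $n$ vertices and $n-1$ edges is either a tree or disconnected with at least one cycle, since a forest with $c\geq 2$ components has at most $n-2$ edges. If $G$ is of the second kind, some component of $G$ contains a cycle, which is an even cycle $C_{2k}$ with $k\geq 2$ because $G$ is bipartite; as $C_{2k}\subseteq G$ and the index is monotone under taking subgraphs, $\rho(G)\geq\rho(C_{2k})=2$. But Lemma~\ref{path} gives $\rho(P_n)=2\cos\frac{\pi}{n+1}<2$, contradicting $\mathrm{sp}(G)=\mathrm{sp}(P_n)$. So $G$ is a tree on $n$ vertices.

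The second real step recovers the degrees. Since $G$ is a forest, the Sachs coefficient theorem degenerates to a count of matchings, $\phi(G,\lambda)=\sum_{r\geq 0}(-1)^r m(G,r)\,\lambda^{n-2r}$ with $m(G,r)$ the number of $r$-edge matchings; comparing with $\phi(P_n,\lambda)=\sum_r(-1)^r\binom{n-r}{r}\lambda^{n-2r}$ from Lemma~\ref{path} forces $m(G,r)=\binom{n-r}{r}$ for all $r$, in particular $m(G,2)=\binom{n-2}{2}$. Writing $n_i$ for the number of vertices of degree $i$ and inserting this into $m(G,2)=\binom{\varepsilon_G}{2}-\sum_v\binom{d_v}{2}$ alongside $\sum_i n_i=n$ and $\sum_i i\,n_i=2(n-1)$, a short elimination gives $\sum_i (i-1)(i-2)\,n_i=0$, hence $n_i=0$ for all $i\geq 3$. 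A connected graph on $n$ vertices with maximum degree at most $2$ is the path $P_n$, so $G\cong P_n$; together with the trivial cases $n\leq 2$ this shows $P_n$ is DS.

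I expect the only delicate point to be the passage from ``$\varepsilon_G=n-1$'' to ``$G$ is a tree'': a cospectral mate could a priori be a disconnected graph such as an even cycle plus isolated vertices, and the one fact that excludes every such possibility is the strict inequality $\rho(P_n)<2$ read off from Lemma~\ref{path}. After that, the argument is just bookkeeping with matching numbers.
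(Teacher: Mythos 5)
Your argument is correct, but it is not the one behind the paper's citation: Lemma~\ref{pathHvD} is quoted from van Dam--Haemers, whose proof runs through Smith's classification (Lemma~\ref{smith} here). There one observes that $\rho(P_n)=2\cos\frac{\pi}{n+1}<2$, so every component of a cospectral mate $G$ lies in $\mathscr{G}_{<2}$; since all those graphs are trees and $\varepsilon_G=\nu_G-1$, the graph $G$ is a single Smith tree, and the exceptional trees $P_{1;n-1}^1$ and $P_{2;k-1}^1$ are then ruled out by comparing indices ($2\cos\frac{\pi}{2n-2}$, $2\cos\frac{\pi}{12}$, $2\cos\frac{\pi}{18}$, $2\cos\frac{\pi}{30}$ versus $2\cos\frac{\pi}{n+1}$). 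You use the bound $\rho(P_n)<2$ only to exclude cycle-containing components (which is all that is needed to get connectedness from $\varepsilon_G=n-1$), and you replace the classification step by the matching count: for a forest the coefficients of $\phi$ are the matching numbers, so $m(G,2)=\binom{n-2}{2}$ forces $\sum_i(i-1)(i-2)n_i=0$ and hence maximum degree $2$. Your computation checks out ($\sum_v d_v^2=4n-6$ exactly as for the path), and the trade-off is clear: the classical route is shorter given Lemma~\ref{smith} as a black box, while yours is self-contained apart from Lemma~\ref{path} and the Sachs/matching interpretation of the coefficients, and it is the same $2$-matching technique the paper itself deploys via Lemma~\ref{Match}(i) in Section~5. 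One cosmetic remark: bipartiteness is not actually needed in your connectivity step, since $\rho(C_m)=2$ for every $m\geqslant 3$, odd or even.
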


In order to prove that a graph to be DS, it is useful to find as many as possible algebraic invariants for cospectral graphs which are summarized in the following proposition (for a proof see, for instance, \cite[Lemma 4]{dam}).

\begin{prop}\label{invariant}
Let $G$ and $H$ be two cospectral graphs. Then,\smallskip

\noindent $\mathrm{(i)}$ $\nu_G = \nu_H$ and $\varepsilon_G = \varepsilon_H$.

\noindent $\mathrm{(ii)}$ $G$ is bipartite if and only if $H$ is bipartite.

\noindent $\mathrm{(iii)}$ $G$ is $k$-regular if and only if $H$ is $k$-regular.

\noindent $\mathrm{(iv)}$ $G$ is $k$-regular with girth $g$ if and only if $H$ is
$k$-regular with girth $g$.

\noindent $\mathrm{(v)}$ $G$ and $H$ have the same number of closed walks of
any fixed length.
\end{prop}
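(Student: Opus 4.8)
The plan is to derive all five statements from one identity. Since $G$ and $H$ are cospectral they have the same characteristic polynomial, hence the same multiset $\lambda_1,\dots,\lambda_{\nu_G}$ of eigenvalues, so \emph{every} symmetric function of the eigenvalues is shared. The one I would exploit is the sequence of power sums: for each integer $k\geqslant 0$,
\[
\mathrm{tr}\bigl(A(G)^k\bigr)=\sum_{i}\lambda_i(G)^k=\sum_i\lambda_i(H)^k=\mathrm{tr}\bigl(A(H)^k\bigr).
\]
Because the $(i,i)$-entry of $A^k$ counts the walks of length $k$ from a vertex to itself, $\mathrm{tr}(A^k)$ equals the number of closed walks of length $k$; this is exactly (v), and it (together with a couple of coefficients of $\phi$) drives everything else.

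For (i) I would note that $\nu_G=\deg\phi(G)$, so $\nu_G=\nu_H$, and that $\mathrm{tr}(A(G)^2)=\sum_i\lambda_i(G)^2=2\varepsilon_G$ since $A(G)$ is symmetric with zero diagonal; comparing the $k=2$ traces gives $\varepsilon_G=\varepsilon_H$. For (ii) I would use that a graph is bipartite if and only if it contains no odd cycle, equivalently no closed walk of odd length, i.e.\ $\mathrm{tr}(A^k)=0$ for all odd $k$; since these traces are shared by (v), bipartiteness transfers from $G$ to $H$. For (iii) I would invoke the standard bound $\lambda_1(G)\geqslant 2\varepsilon_G/\nu_G$, with equality precisely when $G$ is regular, the common degree then being $\lambda_1(G)$: as $\lambda_1$, $\nu_G$ and $\varepsilon_G$ are already known to be spectral invariants, $G$ is $k$-regular iff $\lambda_1(G)=k=2\varepsilon_G/\nu_G$ iff the same holds for $H$.

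For (iv) I would assume, thanks to (iii), that $G$ and $H$ are both $k$-regular, and read the girth off the trace sequence. Let $t_\ell(k)$ be the number of closed walks of length $\ell$ based at the root of the infinite $k$-regular tree. The key observation is that any closed walk either cancels down to the trivial walk by removing backtracking steps---a ``tree walk'', whose number at each vertex is $t_\ell(k)$ irrespective of the global shape of $G$---or reduces to a nontrivial closed walk that must traverse a cycle and therefore have length at least $g$. Hence $\mathrm{tr}(A(G)^\ell)=\nu_G\,t_\ell(k)$ for every $\ell<g$, whereas going around a shortest cycle forces $\mathrm{tr}(A(G)^g)>\nu_G\,t_g(k)$; consequently
\[
g=\min\bigl\{\ell\geqslant 1:\ \mathrm{tr}(A(G)^\ell)\neq \nu_G\,t_\ell(k)\bigr\},
\]
a quantity manifestly determined by $\nu_G$, $k$ and the shared traces, so $G$ and $H$ have the same girth.

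I expect the only delicate step to be (iv): one must justify cleanly that closed walks of length below the girth are exactly the tree walks (a backtracking-reduction argument, or equivalently a local tree-likeness of small balls) and that the count strictly increases at $\ell=g$. The remaining four items are immediate once the identity $\mathrm{tr}(A^k)=\sum_i\lambda_i^k$ and the elementary facts about the coefficients of $\phi$ are in place.
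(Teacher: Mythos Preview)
Your argument is correct and is essentially the standard proof of these facts. The paper, however, does not supply its own proof of this proposition: it simply refers the reader to \cite[Lemma~4]{dam} (van Dam and Haemers). So there is no ``paper's approach'' to compare against beyond that citation; what you wrote is precisely the kind of argument one finds behind that reference.

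A couple of minor remarks on presentation. For (ii) you could alternatively use the one-line spectral characterization ``$G$ is bipartite $\iff$ the spectrum is symmetric about $0$'', which is immediate from $\phi(G)=\phi(H)$ without going through closed walks. For (iv), your backtracking-reduction sketch is right: the clean way to phrase it is that any closed walk of length $\ell<g$ lifts to a closed walk in the universal cover (the infinite $k$-regular tree), so its count at each vertex equals $t_\ell(k)$; at $\ell=g$ a shortest cycle provides at least $2g$ extra non-backtracking closed walks, giving the strict inequality. With that wording the ``delicate step'' you flagged becomes routine.
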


\begin{lem}\label{smith}{\rm \cite{cve1,smith}} Let $\mathscr{G}_{<2}$ be the set of connected graphs
 whose index is less than $2$. Then,
$$ \mathscr{G}_{<2}= \left\{P_n(n \geq 1), \, P_{1;n-1}^1(n \geq 4)\} \cup
\{P_{2;k-1}^1 \mid k = 5,6,7 \right\}.$$\end{lem}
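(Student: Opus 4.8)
The plan is to establish the two inclusions separately, with the whole argument resting on the monotonicity of the spectral radius: for any subgraph $H$ of a graph $G$ one has $\rho(H)\le\rho(G)$, and the inequality is strict when $G$ is connected and $H$ is a proper subgraph (both facts are immediate from Perron--Frobenius, since $0\le A(H)\le A(G)$ entrywise and $A(G)$ is irreducible). The graphs listed in $\mathscr{G}_{<2}$ are exactly the simply-laced Dynkin diagrams: the paths $P_n$ (type $A_n$), the trees $P_{1;n-1}^1$ (type $D_n$), and the trees $P_{2;k-1}^1$ for $k=5,6,7$ (types $E_6,E_7,E_8$). Their natural gatekeepers are the connected graphs of index exactly $2$, namely the cycles $C_n$ and the extended (affine) diagrams $\widetilde{D}_m$ ($m\ge4$), $\widetilde{E}_6$, $\widetilde{E}_7$, $\widetilde{E}_8$; here $\widetilde{D}_4=K_{1,4}$, for $m\ge5$ the tree $\widetilde{D}_m$ is a path carrying two pendant edges at each endpoint, and each $\widetilde{E}_k$ arises from $E_k$ by lengthening one arm by a single edge.

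First I would check that every listed graph has index below $2$. Lemma~\ref{path} gives $\rho(P_n)=2\cos\frac{\pi}{n+1}<2$ and $\rho(C_n)=2$. Each remaining listed graph is a proper subgraph of a connected graph of index $2$ (one has $D_n\subsetneq\widetilde{D}_n$ and $E_k\subsetneq\widetilde{E}_k$, deleting a single leaf in each case), so strict monotonicity yields $\rho<2$. This reduces the forward direction to the single fact that the extended diagrams have index exactly $2$. For this I would exhibit on each of them a positive vector $x$ with $\sum_{w\sim v}x_w=2x_v$ at every vertex $v$: a connected graph admitting a positive eigenvector for an eigenvalue $\mu$ has $\rho=\mu$ by Perron--Frobenius. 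The required labels are the classical Coxeter marks (for $\widetilde{D}_m$ one labels the four leaves $1$ and all remaining vertices $2$; on $\widetilde{E}_6$ one labels the centre $3$, the three mid-arm vertices $2$ and the three tips $1$; and analogously for $\widetilde{E}_7,\widetilde{E}_8$), and verifying the eigenvector equation at each vertex is a finite routine check.

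For the reverse inclusion I would strip forbidden subgraphs from an arbitrary connected $G$ with $\rho(G)<2$. Since $\rho(C_n)=2$, monotonicity forbids any cycle, so $G$ is a tree. Since $\rho(K_{1,4})=\rho(\widetilde{D}_4)=2$, no vertex has degree exceeding $3$. If $G$ had two vertices $u,v$ of degree $3$, then the $u$--$v$ path together with one further edge into each of the two branches hanging at $u$ and at $v$ would form a copy of some $\widetilde{D}_m$ ($m\ge5$), again forcing $\rho(G)\ge2$; hence $G$ has at most one vertex of degree $3$. Thus $G$ is either a path $P_n$ or a starlike tree with a single branch vertex and three arms of lengths $a\le b\le c$.

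In this last case the extended $E$-diagrams complete the classification. Since truncating arms realises subgraph containment, the starlike tree with arms $(a,b,c)$ contains the one with arms $(a',b',c')$ whenever $a\ge a'$, $b\ge b'$, $c\ge c'$. Forbidding $\widetilde{E}_6=(2,2,2)$ forces $a=1$; forbidding $\widetilde{E}_7=(1,3,3)$ then forces $b\le2$; and forbidding $\widetilde{E}_8=(1,2,5)$ forces $c\le4$ when $b=2$ (while $b=1$ imposes no further bound). The admissible triples are therefore exactly $(1,1,c)$ with $c\ge1$, giving the family $D_n=P_{1;n-1}^1$, and $(1,2,c)$ with $c\in\{2,3,4\}$, giving $E_6,E_7,E_8=P_{2;k-1}^1$ for $k=5,6,7$; together with the path case this is precisely the stated list. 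I expect the index-$2$ computation for the extended diagrams, underpinning both directions, to be the only place requiring genuine verification, and even that collapses to exhibiting the explicit positive eigenvectors above; one further point needing care is that two degree-$3$ vertices really do yield a $\widetilde{D}_m$ subgraph, which uses the degree bound $\le3$ to guarantee exactly two spare edges at $u$ and at $v$.
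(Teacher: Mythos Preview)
Your argument is correct and follows the classical route to Smith's theorem: verify that the extended Dynkin diagrams $\widetilde{A}_n=C_n$, $\widetilde{D}_m$, $\widetilde{E}_6$, $\widetilde{E}_7$, $\widetilde{E}_8$ all have index exactly $2$ (via explicit positive Perron eigenvectors), and then use strict monotonicity of the index under proper connected subgraphs to force any connected $G$ with $\rho(G)<2$ to be a tree of maximum degree $3$ with at most one branch vertex, after which the three $\widetilde{E}$-exclusions pin down the arm lengths. The steps are all sound, including the point you flagged about two degree-$3$ vertices producing a $\widetilde{D}_m$ subgraph.

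There is, however, nothing to compare against: the paper does not prove Lemma~\ref{smith}. It is quoted verbatim from the literature (references \cite{cve1,smith}) as a known preliminary result, with no argument supplied. What you have written is a faithful reconstruction of the standard proof one finds in those sources. If anything, your write-up is more detailed than what typically appears in textbook accounts, since you spell out both the Perron--Frobenius input and the explicit Coxeter labels rather than simply asserting that the affine diagrams have index $2$.
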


\begin{lem}\label{smith1}{\rm \cite{cve1,smith}}   Let $\mathscr{G}_{2}$ be the set of connected graphs
 whose index is $2$. Then,
$$\mathscr{G}_2 = \left\{C_n \; (n \geq 3), \, P_{1,1;n-2}^{1,n-4} \; (n \geq 6),\,  K_{1,4}, \, P_{2;5}^2,\,
P_{1;8}^2, \, P_{1;7}^3 \right\}.$$\end{lem}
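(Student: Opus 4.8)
The statement is the classical theorem of Smith identifying the connected graphs of index exactly $2$ with the affine (extended) Dynkin diagrams $\widetilde A_{n-1}=C_n$, $\widetilde D_{n-1}=P^{1,n-4}_{1,1;n-2}$, $\widetilde D_4=K_{1,4}$, $\widetilde E_6=P^{2}_{2;5}$, $\widetilde E_7=P^{3}_{1;7}$ and $\widetilde E_8=P^{2}_{1;8}$. The plan is to prove the two inclusions separately, leaning on Lemma~\ref{smith} for the finite Dynkin diagrams. For the inclusion ``$\supseteq$'' I would check that each listed graph $F$ has $\rho(F)=2$ by exhibiting a \emph{positive} eigenvector for the eigenvalue $2$: the all-ones vector for the $2$-regular graph $C_n$, and the affine marks (Kac labels) for the trees. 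Concretely, one assigns $1$ to the leaves and $2$ to the interior vertices for $K_{1,4}$ and the double forks $P^{1,n-4}_{1,1;n-2}$; central value $3$ with legs $2,1$ for $P^2_{2;5}$; values $1,2,3,4,3,2,1$ along the path and $2$ on the pendant for $P^3_{1;7}$; and $6$ at the trivalent vertex with legs $5,4,3,2,1$, $4,2$ and $3$ for $P^2_{1;8}$. In every case a one-line local check yields $A(F)x=2x$ with $x>0$, and since by Perron--Frobenius the index is the only eigenvalue admitting a positive eigenvector, $\rho(F)=2$.

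The engine for the reverse inclusion is the strict monotonicity of the index: if $G$ is connected and $F\subsetneq G$ is a proper subgraph, then $\rho(F)<\rho(G)$. This is the standard consequence of Perron--Frobenius applied to nonnegative matrices $0\le B\le A$ with $A$ irreducible and $B\ne A$, after padding $F$ with isolated vertices (which do not affect the positive index $\rho(G)=2$). Let $G$ be connected with $\rho(G)=2$. A vertex of degree $\ge 5$ would give $K_{1,5}\subseteq G$ with $\rho(K_{1,5})=\sqrt5>2$, impossible; hence $\Delta(G)\le 4$. If $\Delta(G)=4$ then $K_{1,4}\subseteq G$, and since $\rho(K_{1,4})=2=\rho(G)$, monotonicity forces $G=K_{1,4}$. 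If $G$ contains a cycle $C_k$ then likewise $G=C_k$. Thus we may assume $G$ is a tree with $\Delta(G)\le 3$; it is not a path (paths lie in $\mathscr G_{<2}$ by Lemma~\ref{smith}), so it has at least one vertex of degree $3$.

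The crux is then the tree analysis. Suppose $G$ has two vertices $u,v$ of degree $3$. Picking one edge from each of the two branches that leave $u$ off the $u$--$v$ path, likewise at $v$, together with the entire $u$--$v$ path, exhibits a double fork $P^{1,k-4}_{1,1;k-2}$ (with $k\ge 6$) as a subgraph; by monotonicity $G$ equals it. If instead $G$ has exactly one vertex of degree $3$, then $G$ is a starlike tree with three legs of $a\le b\le c$ edges, and a short decision tree settles it: if $a,b,c\ge 2$ then $P^2_{2;5}\subseteq G$, whence $G=P^2_{2;5}$; otherwise $a=1$, and if $b,c\ge 3$ then $P^3_{1;7}\subseteq G$, whence $G=P^3_{1;7}$; otherwise $b\le 2$, and if $b=2$, $c\ge 5$ then $P^2_{1;8}\subseteq G$, whence $G=P^2_{1;8}$. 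The only surviving leg patterns are $(1,1,c)$ and $(1,2,c)$ with $c\le 4$, i.e.\ exactly the finite Dynkin trees $P^1_{1;n-1}$ and $P^1_{2;k-1}$ of Lemma~\ref{smith}, all of index $<2$ and hence excluded by $\rho(G)=2$. This exhausts all cases and closes the ``$\subseteq$'' inclusion.

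The genuinely delicate part is this last tree analysis: one must argue that any two trivalent vertices already force a double-fork subgraph, and that a single trivalent vertex is pinned down by the three exceptional spiders, all the while making sure that each extracted subgraph is \emph{literally} a member of the list so that strict monotonicity can collapse $G$ onto it (a longer leg or extra edge would produce a \emph{proper} containment and the contradiction $\rho(G)>2$). By comparison, the verification of the positive eigenvectors in the forward direction and the degree/cycle reductions are routine; the careful bookkeeping of leg lengths is where the attention is needed.
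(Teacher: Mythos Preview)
The paper does not supply a proof of Lemma~\ref{smith1}; it is simply quoted from \cite{cve1,smith}. Your argument is correct and is essentially the classical one: exhibit the affine Dynkin marks as positive Perron eigenvectors for the eigenvalue $2$ on each listed graph, and for the converse use the strict monotonicity of the index under proper subgraph inclusion (this is exactly Lemma~\ref{proper} in the paper, so you may cite it rather than rederive it from Perron--Frobenius). The reductions $\Delta(G)\le 4$, then $G=K_{1,4}$ or $G=C_k$ or $G$ a caterpillar-free tree with $\Delta=3$, followed by the double-fork extraction when there are at least two trivalent vertices and the short leg-length decision tree when there is exactly one, are all sound; the leftover starlike patterns $(1,1,c)$ and $(1,2,c)$ with $c\le 4$ are precisely the trees of Lemma~\ref{smith}, closing the argument.
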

Throughout the paper we denote by $\mathfrak h$ the number $\sqrt{2+\sqrt{5}}$, known in literature as the (adjacency)-Hoffman limit value.

\begin{lem}\label{bro}{\rm \cite{bro,cvek}} The set $\mathscr{G}_{\left(2,\mathfrak h\right)}$ of  connected graphs
 whose index is in the interval
$\left(2,\mathfrak h\right)$ only contains T-shape and H-shape graphs. More precisely, $\mathscr{G}_{\left(2,\mathfrak h\right)} = \mathcal T \cup \mathcal H$, where
$${\mathcal T} = \left\{ P_{c;4}^1 \; \middle\vert \;  c > 5 \right\}  \cup \left\{ P_{c;b+2}^1  \; \middle\vert \;  b>2, \;  c > 3 \right\} \cup  \left\{ P_{c;5}^2  \; \middle\vert \;  c > 2 \right\} \cup \left\{ P_{3;6}^2 \right\}$$
and
$$ {\mathcal H} = \left\{ P_{1,1;5}^{1,2}, \;\,  P_{1,1;9}^{2,6}, \;\, P_{1,1;11}^{2,7}, \;\,  P_{1,1;14}^{3,10}, \;\,  P_{1,1;16}^{3,11} \right\} \cup \left\{  P_{1,1;a+b+c+1}^{a,a+b} \; \Big\vert \;   a > 0, c >
0, b \geq b^{*}(a,c)  \right\},$$
with $b^{*}(a, c) = \begin{cases} \, c \qquad \qquad \;\, \text {for $a=1$},\\
                                                  \, c+3 \qquad \;\;\, \text {for $a=2$},\\
                                                  \, a+c+2 \quad \text{for $a>2$}.
\end{cases}$
\end{lem}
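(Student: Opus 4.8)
The plan is to derive the classification from the two Smith-type results already recorded as Lemmas~\ref{smith} and~\ref{smith1}, combined with the strict monotonicity of the index under proper connected subgraphs (if $H$ is a proper connected subgraph of a connected graph $G$, then $\rho(H)<\rho(G)$). By those lemmas a connected graph has index $\leq 2$ exactly when it is a subgraph of one of the maximal index-$2$ graphs $C_n$, $P_{1,1;n-2}^{1,n-4}$, $K_{1,4}$, $P_{2;5}^2$, $P_{1;8}^2$, $P_{1;7}^3$ (the extended Dynkin diagrams); hence $\mathscr{G}_{(2,\mathfrak h)}$ consists precisely of the connected graphs that are \emph{not} subgraphs of any of these (so that $\rho>2$) and that additionally satisfy $\rho<\mathfrak h$. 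The first thing I would do is pin down why $\mathfrak h$ is the right cutoff: attaching one pendant vertex at an interior vertex of the bi-infinite path admits a decaying eigenvector $\psi_n=t^{|n|}$ with $\lambda=t+t^{-1}$, and the eigenvalue equation at the branch vertex forces $\lambda^4-4\lambda^2-1=0$, i.e.\ $\lambda^2=2+\sqrt5=\mathfrak h^2$. Thus $\mathfrak h$ is exactly the limit of the indices of the one-pendant families as an arm grows, and $\mathfrak h^2=2+\sqrt5$ is the algebraic identity I would use throughout.

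Next I would perform three structural reductions, each realized by a minimal forbidden subgraph of index $\geq\mathfrak h$. To force maximum degree $\leq 3$: a vertex of degree $\geq 5$ yields a $K_{1,5}$, with index $\sqrt5>\mathfrak h$, while a vertex of degree $4$ in any connected graph other than $K_{1,4}$ itself yields the tree obtained from $K_{1,4}$ by subdividing one edge, whose index satisfies $\lambda^4-5\lambda^2+3=0$, giving $\lambda^2=(5+\sqrt{13})/2>2+\sqrt5=\mathfrak h^2$; and $K_{1,4}$ has index $2$, which is excluded. To force acyclicity: $C_n$ has index exactly $2$, and any connected graph properly containing a cycle contains a cycle carrying a pendant edge (or a chorded cycle), whose index exceeds $\mathfrak h$ and decreases to $\mathfrak h$ as the girth grows (for instance $C_6$ plus a pendant edge has $\lambda^2=3+\sqrt2>\mathfrak h^2$). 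To force at most two vertices of degree $3$ I would exhibit the smallest open quipus with three branch vertices, check that their indices exceed $\mathfrak h$, and propagate upward by monotonicity; Lemmas~\ref{smith} and~\ref{smith1} already guarantee there is no such graph of index $\leq 2$, so the only question for them is clearing $\mathfrak h$. A tree with $\Delta\leq 3$ and at most two degree-$3$ vertices is automatically a T-shape ($t=1$) or an H-shape ($t=2$), so these reductions leave exactly the families that appear in the statement.

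It then remains to decide, among the T-shape graphs $P_{c;r}^{m}$ and the H-shape graphs $P_{n_1,n_2;r}^{m_1,m_2}$, which have index strictly between $2$ and $\mathfrak h$. For this quantitative core I would compute characteristic polynomials through the recursion $\phi(P_n)=\lambda\,\phi(P_{n-1})-\phi(P_{n-2})$ (Lemma~\ref{path}) together with the substitution $\lambda=t+t^{-1}$, which renders $\phi(P_n)$ in Chebyshev form and permits evaluating $\phi(G)$ at $\lambda=\mathfrak h$ in closed form via $\mathfrak h^2=2+\sqrt5$. For a T-shape the free parameter is the arm length $c$, and the pair of requirements $\rho>2$ and $\rho<\mathfrak h$ reduces to sign conditions on $\phi(G,2)$ and $\phi(G,\mathfrak h)$, singling out the four families of $\mathcal T$ (the generic one being the arm attached next to an endpoint, whose index increases to $\mathfrak h$ as $c\to\infty$). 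Writing an H-shape as $P_{n_1,n_2;a+b+c+1}^{a,a+b}$ (end-tails of lengths $a$ and $c$, bridge of length $b$ between the two branch vertices), the same sign analysis forces $n_1=n_2=1$ and a lower bound on $b$ in terms of $a$ and $c$, producing the generic family $b\geq b^*(a,c)$ together with the five sporadic graphs $P_{1,1;5}^{1,2}$, $P_{1,1;9}^{2,6}$, $P_{1,1;11}^{2,7}$, $P_{1,1;14}^{3,10}$, $P_{1,1;16}^{3,11}$.

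I expect the main obstacle to be precisely this last step: establishing the threshold $b^*(a,c)$ with its three regimes $a=1$, $a=2$, $a>2$, and proving completeness of the sporadic list. The difficulty is twofold. First, one must verify the correct monotonicity of $\rho$ in each of $a,b,c$ and confirm that each family approaches $\mathfrak h$ strictly from below, so that the cutoff is governed by a single sign change of $\phi(\cdot,\mathfrak h)$; the asymmetry between the bridge length $b$ and the end-tail lengths, and the special behaviour when a branch vertex lies within distance two of an endpoint, is what splits $b^*$ into cases. Second, one must rule out, by a finite but delicate evaluation, any further small H-shapes slipping into the interval beyond those listed. Here a clean closed-form sign analysis of $\phi(G,\mathfrak h)$ using $\mathfrak h^2=2+\sqrt5$, rather than case-by-case numerics, is what would make the argument both rigorous and complete.
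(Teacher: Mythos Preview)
The paper does not prove this lemma at all: it is stated with the attribution \texttt{\{\textbackslash rm \textbackslash cite\{bro,cvek\}\}} and no proof is given. It is quoted verbatim from Brouwer--Neumaier and Cvetkovi\'c--Doob--Gutman, and the paper uses it only as a black box (for instance in Propositions~\ref{index} and~\ref{second}). So there is no ``paper's own proof'' to compare your proposal against.

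That said, your outline is essentially the strategy of the original references: kill high-degree vertices and cycles with minimal forbidden subgraphs whose index is at least $\mathfrak h$, reduce to caterpillars with at most two branch vertices, and then use the Chebyshev substitution $\lambda=t+t^{-1}$ together with $\mathfrak h^2=2+\sqrt5$ to decide the sign of $\phi(G,\mathfrak h)$. Your own assessment of where the real work lies --- the threshold $b^*(a,c)$ and the completeness of the sporadic list --- is accurate; this is exactly the delicate case analysis carried out in \cite{bro,cvek}, and your sketch stops short of it. One caution: the monotonicity of $\rho$ in the H-shape parameters is not uniform (lengthening the bridge $b$ \emph{decreases} $\rho$ by Lemma~\ref{internalpath}(ii), while lengthening a tail increases it), so the ``single sign change'' picture you invoke needs to be set up direction by direction, and this is precisely what forces the piecewise definition of $b^*$.
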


\begin{lem}\label{woo}{\rm \cite[Theorem 1]{woo}}
The connected graphs with spectral radii in the interval $\left( \, {\mathfrak h}, \,  3\sqrt{2}/2 \, \right]$
are either open quipus or closed quipus or daggers (see Fig. \ref{Fig2}).
\end{lem}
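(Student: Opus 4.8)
This is the Woo--Neumaier classification, so the plan is to prove the stated inclusion: every connected $G$ with $\rho(G)\in(\mathfrak h,3\sqrt{2}/2]$ lies in one of the three families. The engine throughout is \emph{interlacing} --- every induced subgraph $H$ of $G$ satisfies $\rho(H)\le\rho(G)$ --- so any graph whose index exceeds $3\sqrt{2}/2$ is a forbidden induced subgraph. A first consequence comes from $\rho(G)\ge\sqrt{\Delta(G)}$ together with $(3\sqrt{2}/2)^2=9/2$: every vertex has degree at most $4$. Equivalently, $K_{1,5}$, whose index is $\sqrt{5}>3\sqrt{2}/2$, is forbidden.

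The second ingredient is the Hoffman--Smith subdivision lemma: subdividing an edge lying on an \emph{internal path} (all internal vertices of degree $2$) joining two vertices of degree $\ge 3$ strictly decreases the index once it exceeds $2$. Since our graphs have $\rho>\mathfrak h>2$, lengthening internal paths only lowers $\rho$; consequently the graphs that first breach the bound $3\sqrt{2}/2$ have \emph{short} internal paths, and locating the minimal forbidden configurations becomes a finite computation, with the relevant indices evaluated via Lemma~\ref{path} or by small determinants. I would compile this short list: a graph carrying two independent cycles, a dumbbell, or a theta-subgraph compact enough to have index above $\mathfrak h$; a tree possessing a vertex whose deletion leaves three components each carrying a vertex of degree $\geq 3$ (the non-collinear-branch obstruction); and a vertex of degree $4$ carrying two branches that are both too long.

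The classification then proceeds by cases on the cyclomatic number $c(G)$. The cycle obstructions force $c(G)\le 1$: the finite analysis shows that every bicyclic (or higher) connected graph either stays compact, and then has index $>3\sqrt{2}/2$, or is sparse enough that its index has already dropped to $\le\mathfrak h$, so none survives in the open interval. If $c(G)=1$, excluding the branch-off-cycle obstruction forces all degree-$3$ vertices onto the unique cycle with $\Delta\le 3$, giving a closed quipu. If $c(G)=0$ and $\Delta\le 3$, the non-collinear-branch obstruction forces all degree-$3$ vertices onto a single path, giving an open quipu. The only remaining case is a tree with a necessarily unique vertex of degree $4$: the degree-$4$ obstruction then confines three of its branches to short pendant paths and leaves the fourth as the long tail, which is precisely the dagger of Fig.~\ref{Fig2}.

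The main obstacle is the finite but delicate threshold analysis underlying the forbidden list. The qualitative tools --- interlacing and Hoffman--Smith --- cheaply explain why only short internal paths matter, but they do not by themselves locate the cut-offs. Establishing, for instance, that no bicyclic graph slips into the narrow band $(\mathfrak h,3\sqrt{2}/2]$ (a genuine gap phenomenon sitting between two limit points of graph indices), or the exact admissible lengths of the pendant branches at a degree-$4$ vertex, requires evaluating the indices of the boundary graphs precisely and checking that the relevant monotone families converge on the correct side of each threshold. This boundary bookkeeping is the technical heart of the argument, and the place where the three families are seen to be exactly the survivors.
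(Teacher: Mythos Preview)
The paper does not prove this lemma at all: it is simply quoted from Woo and Neumaier \cite{woo} and used as a black box, so there is no ``paper's own proof'' to compare against. Your sketch is a faithful high-level outline of the strategy Woo and Neumaier actually employ --- bound $\Delta$ via $\rho\ge\sqrt{\Delta}$, use interlacing to compile a finite forbidden-subgraph list, invoke Hoffman--Smith to reduce to short internal paths, and split by cyclomatic number --- and you are candid that the genuine work (the threshold computations that rule out bicyclic graphs and pin down the dagger shape) is not carried out here. As a plan it is correct; as a proof it remains a sketch, which is entirely appropriate given that the paper itself defers to the cited source.
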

We write $H \subseteq G$ (resp., $H \subset G$) if $H$ is a subgraph (resp., proper subgraph) of the graph $G$.

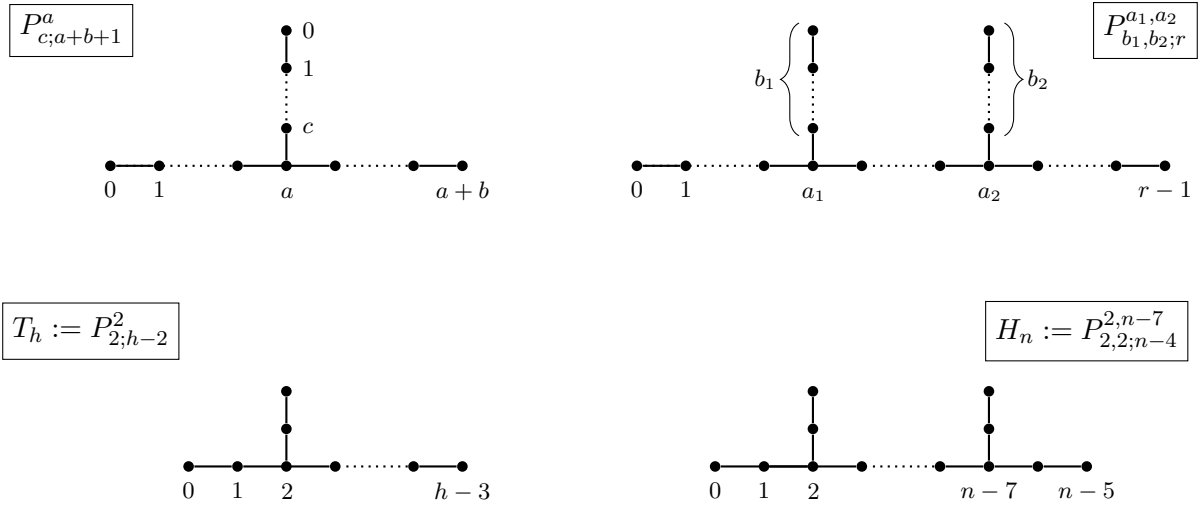
\begin{figure}
	  	\centering

  \begin{tikzpicture}[
vertex3_style/.style={fill,circle,inner sep=0pt,minimum size=4pt},
edge_style/.style={ultra thick, black, drop shadow={opacity=0.1}},
edge_style/.style={ultra thick, black, drop shadow={opacity=0.1}},
nonterminal/.style={
rectangle,
minimum size=2mm,
thin,
draw=black,
top color=white, 
bottom color=white!50!white!50, 
font=\itshape
},
 x=1.3cm,
]
 \node[vertex3_style]   (A1) at (0.2,0)  [label=below: \footnotesize$0$]{};
\node[vertex3_style]   (B1) at (0.7,0)  [label=below: \footnotesize$1$] {};
\node[vertex3_style]   (C1) at (1.5,0)  {};
\path [thick, dotted] (C1) edge (A1);
 \path [thick] (A1) edge (B1);
\node[vertex3_style]   (D1) at (2,0)  [label=below: \footnotesize$\;\;\;\;\,a\phantom{+b}$] {};
\node[vertex3_style]   (E1) at (2.5,0) {};
\node[vertex3_style]   (F1) at (3.3,0)   {};
\node[vertex3_style]   (G1) at (3.8,0)   [label=below: \footnotesize$a+b$] {};
 \path [thick] (C1) edge (E1);
 \path [thick, dotted] (F1) edge (E1);
 \path [thick] (F1) edge (G1);
 \node[vertex3_style]   (H1) at (2,0.5)  [label=right: \footnotesize$c$]{};
 \node[vertex3_style]   (I1) at (2,1.3)  [label=right: \footnotesize$1$]{};
 \node[vertex3_style]   (L1) at (2,1.8)  [label=right: \footnotesize$0$]{};
 \path [thick] (L1) edge (I1);
 \path [thick, dotted] (I1) edge (H1);
 \path [thick] (H1) edge (D1);
 \node[nonterminal] (Z) at (-.2,1.8) {$ P^a_{c;a+b+1}$};

\begin{scope}[xshift=7cm]
 \node[vertex3_style]   (A1) at (0.2,0)  [label=below: \footnotesize$0$]{};
\node[vertex3_style]   (B1) at (0.7,0)  [label=below: \footnotesize$1$] {};
\node[vertex3_style]   (C1) at (1.5,0)  {};
\path [thick, dotted] (C1) edge (A1);
 \path [thick] (A1) edge (B1);
\node[vertex3_style]   (D1) at (2,0)  [label=below: \footnotesize$\;\;\;\;\,a_1\phantom{+b}$] {};
\node[vertex3_style]   (E1) at (2.5,0) {};
\node[vertex3_style]   (F1) at (3.3,0)   {};
\node[vertex3_style]   (G1) at (3.8,0)   [label=below: \footnotesize$\;\;\;\;\,a_2\phantom{+b}$] {};
\node[vertex3_style]   (W1) at (4.3,0)  {};
\node[vertex3_style]   (W2) at (5.1,0)  {};
\node[vertex3_style]   (W3) at (5.6,0)  [label=below: \footnotesize$r-1$] {};
 \path [thick] (C1) edge (E1);
 \path [thick, dotted] (F1) edge (E1);
 \path [thick] (F1) edge (G1);
 \node[vertex3_style]   (H1) at (2,0.5) {};
 \node[vertex3_style]   (I1) at (2,1.3)  {};
 \node[vertex3_style]   (L1) at (2,1.8)  {};
 \path [thick] (L1) edge (I1);
 \path [thick, dotted] (I1) edge (H1);
 \path [thick] (H1) edge (D1);
 \node[nonterminal] (Z) at (5.4,1.8) {$ P^{a_1,a_2}_{b_1,b_2;r}$};
\draw [decorate,decoration={brace,amplitude=7pt,mirror},xshift=0.3pt,yshift=0pt](1.85,1.9)--(1.85,0.4) node[black,midway,xshift=-0.45cm] {\footnotesize $b_1$};
 \path [thick] (G1) edge (W1);
 \path [thick, dotted] (W2) edge (W1);
 \path [thick] (W2) edge (W3);
 \node[vertex3_style]   (H2) at (3.8,0.5) {};
 \node[vertex3_style]   (I2) at (3.8,1.3)  {};
 \node[vertex3_style]   (L2) at (3.8,1.8)  {};
 \path [thick] (L2) edge (I2);
 \path [thick, dotted] (I2) edge (H2);
 \path [thick] (H2) edge (G1);
\draw [decorate,decoration={brace,amplitude=7pt,mirror},xshift=0.3pt,yshift=0pt](3.95,0.4)--(3.95,1.9) node[black,midway,xshift=0.45cm] {\footnotesize $b_2$};
\end{scope}
 \begin{scope}[yshift=-4cm]

\node[vertex3_style]   (B1) at (1,0)  [label=below: \footnotesize$0$] {};
\node[vertex3_style]   (C1) at (1.5,0) [label=below: \footnotesize$1$]  {};
\path [thick] (C1) edge (B1);
\node[vertex3_style]   (D1) at (2,0)  [label=below: \footnotesize$\;\;\;\;\,2\phantom{+b}$] {};
\node[vertex3_style]   (E1) at (2.5,0) {};
\node[vertex3_style]   (F1) at (3.3,0)   {};
\node[vertex3_style]   (G1) at (3.8,0)   [label=below: \footnotesize$h-3$] {};
 \path [thick] (C1) edge (E1);
 \path [thick, dotted] (F1) edge (E1);
 \path [thick] (F1) edge (G1);
 \node[vertex3_style]   (H1) at (2,0.5) {};
 \node[vertex3_style]   (I1) at (2,1)  {};
 \path [thick] (I1) edge (H1);
 \path [thick] (H1) edge (D1);
 \node[nonterminal] (Z) at (0,1.8) {$ T_h:= P^2_{2;h-2}$};

\begin{scope}[xshift=7cm]
\node[vertex3_style]   (B1) at (1,0)  [label=below: \footnotesize$0$] {};
\node[vertex3_style]   (C1) at (1.5,0)[label=below: \footnotesize$1$]  {};
\node[vertex3_style]   (D1) at (2,0)  [label=below: \footnotesize$\;\;\;\;\,2\phantom{+b}$] {};
\path [thick] (D1) edge (B1);
\node[vertex3_style]   (E1) at (2.5,0) {};
\node[vertex3_style]   (F1) at (3.3,0)   {};
\node[vertex3_style]   (G1) at (3.8,0)   [label=below: \footnotesize$n-7$] {};
\node[vertex3_style]   (W1) at (4.3,0)  {};
\node[vertex3_style]   (W2) at (4.8,0) [label=below: \footnotesize$n-5$]  {};
 \path [thick] (C1) edge (E1);
 \path [thick, dotted] (F1) edge (E1);
 \path [thick] (F1) edge (G1);
 \node[vertex3_style]   (H1) at (2,0.5) {};
 \node[vertex3_style]   (I1) at (2,1)  {};
 \path [thick] (I1) edge (H1);
 \path [thick] (H1) edge (D1);
 \node[nonterminal] (Z) at (4.8,1.8) {$ H_n:=P^{2,n-7}_{2,2;n-4}$};
 \path [thick] (G1) edge (W1);
 \path [thick] (W2) edge (W1);
 \node[vertex3_style]   (H2) at (3.8,0.5) {};
 \node[vertex3_style]   (I2) at (3.8,1)  {};
\path [thick] (I2) edge (H2);
 \path [thick] (H2) edge (G1);
\end{scope}

\end{scope}

\end{tikzpicture}
\caption{ \label{Fig1}  \small T-shape and H-shape graphs}
\end{figure}

\begin{lem}\label{proper}{\rm \cite[Theorems 0.6 and 0.7]{cve1}}
Let $G$ be a connected graph and $H \subset G$. Then $\rho(H) < \rho(G)$.
\end{lem}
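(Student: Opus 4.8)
The statement is a classical consequence of Perron--Frobenius theory, so the plan is to reduce it to a monotonicity property of the spectral radius of nonnegative matrices and exploit the symmetry of adjacency matrices. First I would pass from graphs to matrices. Let $A = A(G)$ and let $B$ be the adjacency matrix of $H$, padded with zero rows and columns on the vertices of $V_G \setminus V_H$ so that both $A$ and $B$ are indexed by $V_G$. Zero-padding only appends null eigenvalues, so $\rho(B) = \rho(H)$ and $\rho(A) = \rho(G)$. Because every edge of $H$ is an edge of $G$ we have $0 \leqslant B \leqslant A$ entrywise, and because $H \subset G$ is proper we have $B \neq A$: if $E_H \subsetneq E_G$ there is an edge of $G$ absent from $H$, while if $V_H \subsetneq V_G$ any deleted vertex has degree at least $1$ in the connected graph $G$ and hence a nonzero row in $A$ but a zero row in $B$. (The only excluded situation is $G = K_1$, which carries no proper subgraph with an edge; we assume $G$ has at least one edge, so $\rho(G) > 0$.)

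The core is then the matrix fact: if $A$ is nonnegative and irreducible, $0 \leqslant B \leqslant A$ and $B \neq A$, then $\rho(B) < \rho(A)$. Connectedness of $G$ is exactly the irreducibility of $A$, so Perron--Frobenius applies: $\rho(A)$ is a simple eigenvalue, it admits a strictly positive eigenvector, and it is the only eigenvalue of $A$ possessing a nonnegative eigenvector. For the weak inequality I would use that $B = A(H)$ is symmetric and nonnegative, so $\rho(B)$ equals its largest eigenvalue, attained by a nonnegative unit eigenvector $y \geqslant 0$ with $By = \rho(B)\,y$. Then
\[ \rho(B) = y^{\top} B y \leqslant y^{\top} A y \leqslant \max_{\|v\|=1} v^{\top} A v = \rho(A), \]
where the first inequality uses $B \leqslant A$ together with $y \geqslant 0$, and the last equality is the Rayleigh characterization of $\lambda_1(A) = \rho(A)$.

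The substantive work is the strict inequality, and this is where both irreducibility and the proper-subgraph hypothesis enter. I would argue by contradiction: if $\rho(B) = \rho(A)$, the displayed chain forces simultaneously $y^{\top}(A - B)y = 0$ and $y^{\top} A y = \rho(A)$. The latter says $y$ is a top eigenvector of $A$, i.e.\ a nonnegative eigenvector for $\rho(A)$; by the Perron--Frobenius uniqueness clause $y$ must be a positive multiple of the Perron vector, hence $y > 0$ entrywise. But then $y^{\top}(A - B)y = \sum_{i,j} y_i (A-B)_{ij}\, y_j = 0$ with every summand nonnegative and every $y_i > 0$ forces $(A - B)_{ij} = 0$ for all $i,j$, that is $A = B$, contradicting $B \neq A$. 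Therefore $\rho(B) < \rho(A)$, i.e.\ $\rho(H) < \rho(G)$.

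I expect the main obstacle to be precisely this strictness step. The weak inequality holds for \emph{any} nonnegative $B \leqslant A$, but upgrading it to a strict one is genuinely false without irreducibility (for a disconnected $G$ one could delete edges in a component not achieving the spectral radius). The argument must therefore route through the simplicity and strict positivity of the Perron eigenvector guaranteed by the connectedness of $G$, and the delicate point is the logical order: one must first use the equality $y^{\top}Ay = \rho(A)$ to force $y$ to be the (strictly positive) Perron vector, and only then read off $A = B$ from $y^{\top}(A-B)y = 0$. The zero-padding reduction and the degenerate single-vertex case are minor bookkeeping that I would dispatch at the outset.
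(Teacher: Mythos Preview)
Your proof is correct and is the standard Perron--Frobenius argument; the paper itself does not prove this lemma but merely cites \cite[Theorems~0.6 and~0.7]{cve1}, where exactly this reasoning (monotonicity of the spectral radius under $0 \leqslant B \leqslant A$, with strictness from irreducibility and the positivity of the Perron vector) is carried out. There is nothing to compare: you have supplied the classical proof that the cited reference contains.
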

For $v \in V_G$, let $G-v$ be the graph obtained from $G$ by deleting $v$ and its incident edges.
\begin{lem}\label{interlace}{\rm \cite[Theorem 0.10]{cve1}}
Let $\lambda_1 \geqslant \lambda_2 \geqslant \cdots \geqslant \lambda_n$ and $\mu_1
\geqslant \mu_2 \geqslant \cdots \geqslant \mu_{n-1}$ be the eigenvalues of the graphs $G$ and $G-v$ respectively.
Then, $\lambda_1 \geqslant \mu_1
\geqslant \lambda_2 \geqslant \mu_2 \geqslant \cdots \geqslant \mu_{n-1} \geqslant
\lambda_n$.
\end{lem}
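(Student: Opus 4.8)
The plan is to recognize that the statement is nothing but Cauchy's interlacing theorem applied to the adjacency matrix. Labelling the vertices so that $v$ is the last one, the matrix $A(G-v)$ is exactly the leading $(n-1)\times(n-1)$ principal submatrix of $A=A(G)$ obtained by deleting the row and column indexed by $v$. Writing $B=A(G-v)$, it therefore suffices to prove the following purely linear-algebraic fact: if $A$ is a real symmetric $n\times n$ matrix with eigenvalues $\lambda_1\geqslant\cdots\geqslant\lambda_n$ and $B$ is a principal submatrix of order $n-1$ with eigenvalues $\mu_1\geqslant\cdots\geqslant\mu_{n-1}$, then $\lambda_k\geqslant\mu_k\geqslant\lambda_{k+1}$ for $k=1,\dots,n-1$. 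I would prove this at this level of generality and then read off the lemma by specializing $A=A(G)$ and $B=A(G-v)$.

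The cleanest route is the Courant--Fischer variational characterization. For a nonzero $x$ put $R_A(x)=\frac{x^{\mathsf T}Ax}{x^{\mathsf T}x}$, and let $W=\{x\in\mathbb R^n : x_n=0\}$. The key elementary observation is that for $x=(y,0)\in W$ one has $x^{\mathsf T}Ax=y^{\mathsf T}By$ and $x^{\mathsf T}x=y^{\mathsf T}y$, so $R_A$ restricted to $W$ reproduces $R_B$ on $\mathbb R^{n-1}$. To obtain $\lambda_k\geqslant\mu_k$ I would use the max--min forms
$$\lambda_k=\max_{\dim S=k}\ \min_{0\neq x\in S}R_A(x),\qquad \mu_k=\max_{\dim U=k}\ \min_{0\neq y\in U}R_B(y),$$
take the optimal $k$-dimensional $U^\ast\subseteq\mathbb R^{n-1}$ for $\mu_k$, embed it as $\{(y,0):y\in U^\ast\}\subseteq W$, and use this as a feasible competitor in the maximization defining $\lambda_k$. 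For $\mu_k\geqslant\lambda_{k+1}$ I would instead invoke the dual min--max forms,
$$\lambda_{k+1}=\min_{\dim T=n-k}\ \max_{0\neq x\in T}R_A(x),\qquad \mu_k=\min_{\dim V=n-k}\ \max_{0\neq y\in V}R_B(y),$$
embedding the optimal $(n-k)$-dimensional subspace $V^\ast\subseteq\mathbb R^{n-1}$ for $\mu_k$ into $W$ as a feasible competitor for $\lambda_{k+1}$. Chaining the two inequalities gives $\lambda_k\geqslant\mu_k\geqslant\lambda_{k+1}$, which is precisely the asserted interlacing.

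An alternative that fits the characteristic-polynomial spirit of this paper is available when the eigenvalues are generic. Since the adjacency matrix has zero diagonal, $A=\begin{pmatrix}B & b\\ b^{\mathsf T}&0\end{pmatrix}$, and the Schur-complement identity yields $\phi(G,\lambda)=\phi(G-v,\lambda)\,f(\lambda)$ with $f(\lambda)=\lambda-b^{\mathsf T}(\lambda I-B)^{-1}b=\lambda-\sum_{i}\frac{(u_i^{\mathsf T}b)^2}{\lambda-\mu_i}$, where $\{u_i\}$ is an orthonormal eigenbasis of $B$. The residues $(u_i^{\mathsf T}b)^2$ are nonnegative and $f'(\lambda)=1+\sum_i\frac{(u_i^{\mathsf T}b)^2}{(\lambda-\mu_i)^2}>0$, so $f$ increases monotonically from $-\infty$ to $+\infty$ across each interval between consecutive poles $\mu_i$, forcing exactly one root of $f$ (hence of $\phi(G)$) in each such interval and one beyond each extreme; this gives strict interlacing. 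The main delicate point in this second approach, and essentially the only obstacle in the whole proof, is passing from the strict generic inequalities to the weak inequalities of the statement when some $\mu_i$ coincide or some residue $(u_i^{\mathsf T}b)^2$ vanishes; this is handled by a standard continuity/limiting argument using continuity of eigenvalues under small symmetric perturbations. In the Courant--Fischer route there is no such issue, and the only care needed is the bookkeeping of subspace dimensions; I would therefore present the variational proof as primary and mention the rational-function computation as an instructive companion.
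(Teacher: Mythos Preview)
Your argument is correct: the Courant--Fischer variational characterization yields the interlacing inequalities cleanly, and your alternative via the Schur complement and the secular function is also valid (with the continuity caveat you note). There is nothing to compare against, since the paper does not supply its own proof of this lemma---it is quoted from \cite[Theorem~0.10]{cve1} as a standard background result.
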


Denoted by $d(v)$ the vertex degree of $v \in V_G$ in $G$, an {\it internal path} of $G$ is a (possibly closed) walk $v_1 \dots v_k$ such the $\min \{d(v_1), d(v_k)\} \geqslant 3$ and $d(v_i) = 2$ for $2 \leqslant i \leqslant k-1$.
We also recall that {\it subdividing an edge $uv \in E_G$} means  inserting a new vertex $z$ in $V_G$ and replacing $uv$ with $uz$ and $zw$.
The two parts of the following lemma come from Lemma \ref{proper} and \cite{hof} respectively.
\begin{lem}\label{internalpath}
Let $uv$ be an edge of a connected graph $G$ and let $G_{uv}$ be the graph
obtained from $G$ by subdividing the edge $uv \in E_G$. Then,\smallskip

\noindent $\mathrm{(i)}$ if $uv$ is not in an internal path of $G$ and $G$ is not a cycle, then
$\rho(G_{uv}) > \rho(G)$;\smallskip

\noindent
$\mathrm{(ii)}$ if $uv$ belongs to an internal path of $G$ and $G \not \in \left\{
P_{1,1;n-2}^{1,n-4} \mid  n \geqslant 6 \right\}$, then $\rho(G_{uv}) < \rho(G)$.
\end{lem}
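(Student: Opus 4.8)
The plan is to handle the two parts separately, deriving (i) directly from Lemma~\ref{proper} and reducing (ii) to a sign computation on the characteristic polynomial, which is the substance of Hoffman--Smith \cite{hof}.

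\emph{Part (i).} Assume $uv$ lies on no internal path and $G\neq C_n$. First I would look at the maximal path $W\colon w_0 w_1\cdots w_m$ of consecutive degree-two vertices containing $uv$. By maximality both $w_0$ and $w_m$ have degree $\neq 2$; since $G$ is not a cycle this chain does not close up, so the endpoints genuinely exist, and if both had degree $\geq 3$ then $uv$ would lie on the internal path $w_0\cdots w_m$, against the hypothesis. Hence at least one endpoint, say $w:=w_m$, is a leaf. Subdividing $uv$ lengthens $W$ by the single new vertex $z$, whereas deleting the leaf $w$ shortens it again by one; since the rest of the graph is untouched and $w$ is still a leaf in $G_{uv}$, one checks that $G_{uv}-w\cong G$. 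As $G_{uv}$ is connected and $G_{uv}-w$ is a proper subgraph, Lemma~\ref{proper} gives $\rho(G)=\rho(G_{uv}-w)<\rho(G_{uv})$.

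\emph{Part (ii), locating $\rho(G)$.} Now suppose $uv$ lies on an internal path and $G\notin\{P_{1,1;n-2}^{1,n-4}\mid n\geqslant 6\}$. The first step is to show $\rho(G)>2$. By Lemmas~\ref{smith} and~\ref{smith1} the connected graphs of index at most $2$ are completely listed: inspecting the list, every graph of index $<2$ carries at most one vertex of degree $\geq 3$ and hence has no internal path, while among the graphs of index exactly $2$ the only ones possessing two vertices of degree $\geq 3$ (equivalently, an internal path) are the $P_{1,1;n-2}^{1,n-4}$. Since $G$ has an internal path but is excluded from that family, I conclude $\rho:=\rho(G)>2$.

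\emph{Part (ii), reduction and crux.} Write $z$ for the subdivision vertex, so that $G_{uv}-z=G-uv$. Interlacing (Lemma~\ref{interlace}) gives $\lambda_2(G_{uv})\leq\rho(G_{uv}-z)=\rho(G-uv)$, and $\rho(G-uv)<\rho(G)$ by Lemma~\ref{proper}; hence $\lambda_2(G_{uv})<\rho$. Consequently, on the interval $(\lambda_2(G_{uv}),+\infty)$ the monic polynomial $\phi(G_{uv},\cdot)$ is positive precisely to the right of its largest root, so that
\[
\rho(G_{uv})<\rho \iff \phi(G_{uv},\rho)>0 .
\]
It therefore suffices to prove $\phi(G_{uv},\rho)>0$. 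In the bridge case one expands $\phi$ along the internal path using the recurrence $\phi(F_{m+1})=\lambda\,\phi(F_m)-\phi(F_{m-1})$ satisfied by the family $F_m$ of graphs that differ only in the length $m$ of that path; this yields $\phi(G_{uv},\rho)=\rho\,\phi(G,\rho)-\phi(G',\rho)=-\phi(G',\rho)$, where $G'$ is $G$ with the internal path shortened by one edge, and the desired positivity becomes $\phi(G',\rho)<0$, which propagates the claim to a strictly shorter internal path and sets up an induction on its length. The main obstacle is the base of this induction, together with the case in which the internal path lies on a cycle, where Schwenk-type cycle corrections enter the expansion; this is exactly the delicate sign analysis performed in \cite{hof}, and it is there that the excluded family $P_{1,1;n-2}^{1,n-4}$ surfaces as the single boundary case in which $\rho$ stays equal to $2$ and the strict inequality degenerates into an equality.
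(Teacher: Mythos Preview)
The paper does not supply its own proof of this lemma: the sentence immediately preceding it simply records that part~(i) follows from Lemma~\ref{proper} and part~(ii) from \cite{hof}. Your proposal is aligned with exactly these attributions, and for part~(i) you go further by spelling out a correct explicit argument realising the reduction to Lemma~\ref{proper}.

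One small comment on your sketch of~(ii). The reduction to the sign of $\phi(G_{uv},\rho)$ via interlacing, and the identity $\phi(G_{uv},\rho)=-\phi(G',\rho)$ obtained from the linear recurrence along the internal path, are both fine. However, the phrase ``propagates the claim to a strictly shorter internal path and sets up an induction'' is not yet a clean inductive step: what you need at that point is $\phi(G_{m-1},\rho_m)<0$, whereas the natural inductive hypothesis is $\rho_m<\rho_{m-1}$, and passing from the latter to the former requires knowing that $\rho_m>\lambda_2(G_{m-1})$, which you have not established. Since you explicitly hand this residual analysis (and the cycle case) to \cite{hof}, exactly as the paper does, this does not constitute a gap relative to the paper's treatment.
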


Let $v$ be a vertex of a graph $G$. As it is usual, we denote by $N_G(v)$ the {\it neighborhhod } of $v$, i.e.\ the set $\{ w \in V_G \mid vw \in E_G \}$.
\begin{lem}\label{formula1}{\rm \cite[Theorems 2 and 3]{Sch}} Let $\mathscr{C}(v)$ (resp., $\mathscr{C}(e)$) be the set
of all cycles of a graph  $G$ containing the vertex $v \in V_G$ (resp., the edge $e=uw \in E_G$).  The following identities of polynomials

\begin{equation}\label{schwenky1}
\phi(G)= \lambda \phi(G-v) - \sum_{v' \in N_G(v)}\phi(G-v-v')-2\sum_{C \in \mathscr{C}(v)}\phi(G-V(C))\end{equation}
and
\begin{equation}\label{schwenky2}
\phi(G) = \phi(G-e) - \phi(G-u-w) -2\sum_{C \in
\mathscr{C}(e)}\phi(G-V(C))
\end{equation}
hold for every $v \in V_G$  and for every $e=uw \in E_G$ (note that $\phi(H)=1$ if $H$ is the null graph $K_0$).
\end{lem}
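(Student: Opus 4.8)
The plan is to prove both identities directly from the Leibniz (permutation) expansion of the determinant, classifying the contributing permutations by how they interact with the vertex $v$ (resp.\ the edge $e$). Fix a labelling $V_G=\{1,\dots,n\}$ with $n=\nu_G$ and put $M:=\lambda I - A(G)$; since $G$ is simple and loopless, $M_{ii}=\lambda$, $M_{ij}=-1$ when $ij\in E_G$, and $M_{ij}=0$ otherwise. Then
\[
\phi(G)=\det M=\sum_{\sigma\in S_n}\operatorname{sgn}(\sigma)\prod_{i=1}^n M_{i,\sigma(i)},
\]
and a nonzero summand requires $i\,\sigma(i)\in E_G$ for every non-fixed $i$. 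Because this signed weight factors as a product over the disjoint cycles of $\sigma$, each fixed point contributes a factor $\lambda$, while a nontrivial $k$-cycle $(i_1\cdots i_k)$ forces $i_1i_2,\dots,i_ki_1\in E_G$ and hence traces a $k$-cycle of $G$ (a single edge when $k=2$). The computation I would perform once and reuse is the \emph{sign of a cycle}: a $k$-cycle has $\operatorname{sgn}=(-1)^{k-1}$ and its entry product is $(-1)^k$ (each of the $k$ edge-entries equals $-1$), so every nontrivial cycle contributes exactly $(-1)^{k-1}(-1)^k=-1$, independently of its length. Furthermore, for $k\geq 3$ each cycle subgraph $C\subseteq G$ arises from exactly two permutation-cycles, namely its two orientations, whereas a single edge arises from exactly one transposition.

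For the vertex identity \eqref{schwenky1} I would partition $S_n$ according to the cycle of $\sigma$ containing $v$. If $\sigma(v)=v$, the factor $\lambda$ splits off and the rest sums to $\det(\lambda I - A(G-v))$, giving $\lambda\,\phi(G-v)$. If $v$ lies in a transposition $(v\,v')$, then necessarily $v'\in N_G(v)$; the cycle contributes $-1$ and the remaining permutation ranges over $G-v-v'$, giving $-\sum_{v'\in N_G(v)}\phi(G-v-v')$. If $v$ lies in a cycle of length $\geq 3$, this is a cycle $C\in\mathscr{C}(v)$ contributing $-1$, its two orientations giving the factor $2$, and the remaining vertices ranging over $G-V(C)$, so this class yields $-2\sum_{C\in\mathscr{C}(v)}\phi(G-V(C))$. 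Summing the three disjoint, exhaustive classes gives \eqref{schwenky1}.

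For the edge identity \eqref{schwenky2} I would compare $\phi(G)$ with $\phi(G-e)$ termwise. Since $A(G)$ and $A(G-e)$ differ only in positions $(u,w)$ and $(w,u)$, any $\sigma$ with $\sigma(u)\neq w$ and $\sigma(w)\neq u$ contributes identically to both sums, while any $\sigma$ with $\sigma(u)=w$ or $\sigma(w)=u$ contributes $0$ to $\phi(G-e)$ because the corresponding entry of $\lambda I - A(G-e)$ vanishes. Hence $\phi(G)-\phi(G-e)$ is the sum over precisely those $\sigma$ using the entry $(u,w)$ or $(w,u)$: the transposition $(u\,w)$, contributing $-\phi(G-u-w)$, together with the cycles of length $\geq 3$ through $e$, each contributing $-1$ and, via its two orientations, $-2\,\phi(G-V(C))$ summed over $C\in\mathscr{C}(e)$. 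Rearranging yields \eqref{schwenky2}.

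The main obstacle I anticipate is the sign-and-orientation bookkeeping: verifying that the cycle sign is the length-independent constant $-1$, that the two orientations of an undirected cycle of length $\geq 3$ are distinct permutation-cycles contributing the factor $2$ while a single edge contributes only the factor $1$, and that the partition of permutations is genuinely disjoint and exhaustive—in particular, in the edge case, that the transposition $(u\,w)$ is counted once and not conflated with the longer cycles through $e$ (in one orientation $\sigma(u)=w$, in the other $\sigma(w)=u$). Once this accounting is settled, the remaining steps are routine, and the convention $\phi(K_0)=1$ keeps the empty-product terms consistent whenever $G-V(C)$ or $G-u-w$ has no vertices.
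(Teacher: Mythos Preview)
Your argument is correct: the Leibniz expansion of $\det(\lambda I-A(G))$ together with the cycle-by-cycle factorisation, the sign computation $(-1)^{k-1}(-1)^k=-1$ for each nontrivial permutation-cycle, and the orientation count (two for graph cycles of length $\geq 3$, one for a single edge) give exactly the three-way split in \eqref{schwenky1} and, after comparing $\phi(G)$ with $\phi(G-e)$ termwise, the decomposition \eqref{schwenky2}. The bookkeeping you flag as the main obstacle is handled correctly; in particular, for the edge formula every $\sigma$ with $\sigma(u)=w$ or $\sigma(w)=u$ either has both (the transposition, counted once) or exactly one (one of the two orientations of a longer cycle through $e$), so the partition is disjoint and exhaustive.

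As for comparison: the paper does not prove Lemma~\ref{formula1} at all---it simply quotes the Schwenk formul\ae\ from \cite[Theorems~2 and~3]{Sch} and moves on. Your write-up therefore supplies a self-contained proof where the paper gives only a citation; the Leibniz-expansion approach you take is the standard one and matches what Schwenk does in the original reference.
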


Equations \eqref{schwenky1} and \eqref{schwenky2} are usually called {\it Schwenk formul\ae} and could be used as an alternative to direct calculations to prove the following identities:
\begin{equation}\label{035}\phi(H_{10}) = \phi(P_2 \cup C_{1,1;6}^{0,3});\quad  \phi(H_{13}) = \phi(P_{2;4}^1 \cup C_{1;6}^0);\quad
\phi(H_{15}) = \phi(P_{1,2,6}^{1,3} \cup C_6),
\end{equation}
(the $H_n$'s have been defined in \eqref{accan}).

Let $k$ be a positive integer. We recall that a $k$-matching in a graph $G$ is a set of $k$ independent edges.
We denote by $M_k(G)$ the number of $k$-matchings in a graph $G$, and by $\Delta_G$ its maximum vertex degree. The following result follows from the Sachs's Coefficient Theorem for characteristic polynomials of graphs (see \cite[Theorem 1.3]{cve1}).
\begin{prop}\label{ahah}
Let $G$ and $H$ be two cospectral graphs. \smallskip

\noindent $\mathrm{(i)}$
 If neither $G$ nor $H$ contain quadrangles as subgraphs, then $M_2(G) = M_2(H)$. \smallskip

\noindent$\mathrm{(ii)}$  If neither $G$ nor $H$ contain quadrangles or hexagons as subgraphs, then $M_3(G) = M_3(H)$.
\end{prop}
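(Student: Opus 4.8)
The plan is to extract $M_2$ and $M_3$ from two individual coefficients of $\phi(G,\lambda)$, each of which is evidently a spectral invariant. Writing $\phi(G,\lambda)=\sum_{i=0}^{\nu_G}a_i\lambda^{\nu_G-i}$, the Sachs Coefficient Theorem \cite[Theorem 1.3]{cve1} gives $a_i=\sum_{U}(-1)^{p(U)}2^{c(U)}$, where $U$ ranges over the \emph{basic figures} on $i$ vertices (subgraphs every component of which is an edge or a cycle), $p(U)$ is the number of components and $c(U)$ the number of cyclic ones. Since $G$ and $H$ are cospectral, $a_i(G)=a_i(H)$ for every $i$, so it suffices to locate $M_2$ inside $a_4$ and $M_3$ inside $a_6$ and to show that the remaining contributions either vanish or are themselves determined.

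For part $\mathrm{(i)}$, a basic figure on $4$ vertices is either a $2$-matching ($p=2$, $c=0$) or a quadrangle ($p=1$, $c=1$); hence $a_4=M_2(G)-2q(G)$, where $q(G)$ is the number of $4$-cycles. As $q(G)=q(H)=0$ by hypothesis, I obtain $M_2(G)=a_4(G)=a_4(H)=M_2(H)$, which settles $\mathrm{(i)}$.

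For part $\mathrm{(ii)}$, the basic figures on $6$ vertices are precisely a $3$-matching ($p=3$, $c=0$), a hexagon ($p=1$, $c=1$), a quadrangle together with a disjoint edge ($p=2$, $c=1$), and a pair of vertex-disjoint triangles ($p=2$, $c=2$). Therefore
\[ a_6=-M_3(G)-2h(G)+2s(G)+4D(G), \]
where $h,s,D$ count hexagons, $(C_4\cup K_2)$-figures, and unordered pairs of vertex-disjoint triangles respectively. Quadrangle-freeness forces $s(G)=s(H)=0$ and hexagon-freeness forces $h(G)=h(H)=0$, so $a_6=-M_3+4D$ for both graphs, and consequently $M_3(G)-M_3(H)=4\,(D(G)-D(H))$.

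The hard part is thus to prove that the number $D$ of vertex-disjoint triangle pairs is itself a spectral invariant under these hypotheses. My plan runs as follows. Because no quadrangle occurs, two distinct triangles cannot share an edge (a shared edge would close a $C_4$), so every pair of distinct triangles is either vertex-disjoint or meets in exactly one vertex; counting the latter at the shared vertex gives $D=\binom{t}{2}-\sum_{v}\binom{t_v}{2}$, with $t=\tfrac16\operatorname{tr}(A^3)$ the (determined) number of triangles and $t_v=\tfrac12(A^3)_{vv}$ the number of triangles through $v$. It then remains to show that $\sum_v t_v^2$ is determined, and for this I would expand the spectral invariant $\operatorname{tr}(A^6)$ (a closed-walk count, hence equal for $G$ and $H$ by Proposition~\ref{invariant}$\mathrm{(v)}$) into its graphlet contributions: quadrangle-freeness makes any two vertices share at most one neighbour, which collapses the common-neighbour sums and renders the low-order degree power-sums spectral (for instance $\operatorname{tr}(A^4)=2\sum_v d(v)^2-2\varepsilon_G$), while hexagon-freeness removes the genuine $C_6$ term, so that $\sum_v t_v^2$ is the only still-undetermined graphlet weight; solving for it yields $D(G)=D(H)$ and hence $M_3(G)=M_3(H)$. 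The delicate step I expect to consume the real work is precisely this exhaustive accounting of $\operatorname{tr}(A^6)$ — verifying that, after the two freeness reductions, every term other than $\sum_v t_v^2$ is pinned down by $\varepsilon_G$, the triangle count, and lower closed-walk counts.
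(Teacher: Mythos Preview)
Your approach via Sachs's Coefficient Theorem is exactly the paper's: its entire proof is the one-line citation of \cite[Theorem~1.3]{cve1}. Part~(i) is complete and correct as you wrote it. For part~(ii) you have actually been more careful than the paper: you correctly isolate the extra term $4D(G)$ in $a_6$ coming from unordered pairs of vertex-disjoint triangles, a contribution the paper's citation silently ignores. Your plan to recover $D(G)=D(H)$ by extracting $\sum_v t_v^{\,2}$ from $\operatorname{tr}(A^6)$ is plausible, but --- as you yourself concede --- the bookkeeping is not carried out, and it is not light: the closed-$6$-walk expansion also drags in quantities like $\sum_v d(v)^3$, $\sum_{uv\in E}(d(u)-1)(d(v)-1)$, and pentagon- and paw-shaped walk contributions, each of which would itself have to be shown spectrally determined under only $C_4$- and $C_6$-freeness before you could solve for $\sum_v t_v^{\,2}$. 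So a genuine gap remains in your argument for the proposition \emph{as stated}.

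The cleanest closure --- and what the paper is tacitly relying on --- is to observe that every invocation of this proposition in the paper is to a graph cospectral with the tree $H_n$; such a graph is bipartite by Proposition~\ref{invariant}(ii), hence triangle-free, so $D=0$ on both sides and $a_6=-M_3$ directly from Sachs. In other words, either add ``triangle-free'' (or ``bipartite'') to the hypotheses --- which is what the one-line Sachs citation genuinely proves and all the paper ever needs --- or be prepared to execute the full $\operatorname{tr}(A^6)$ accounting you outlined if you really want the statement in the generality in which it is written.
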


\begin{lem}\label{Match}
Let $G$ be a graph with $N$ triangles and degree sequence
$(d_1,d_2,\dots,d_{\nu_G})$. \smallskip

\noindent{\rm (i)}
The number of 2-matchings in $G$ is
$\displaystyle M_2(G) = \binom{\varepsilon_G}{2} - \sum_{i=1}^{\nu_G}\binom{d_i}{2}.$ \smallskip

\noindent {\rm (ii)}\label{2match} If $T$ is a tree with maximal degree $\Delta_T = 3$, then $ \displaystyle M_2(T) = \frac{\nu_T^2-5\nu_T}{2}+3-k_T,$
where $k_T$ is the number of vertices of degree $3$ in $V_T$.

If, instead, $G$ is a closed quipu, then  $ \displaystyle M_2(G) = \frac{\nu_G^2- 3\nu_G}{2} - k$. \smallskip

\noindent{\rm (iii)}\label{3match}
The number of 3-matchings in $G$ is
$$M_3(G) = \binom{\varepsilon_G}{3}
- (\varepsilon_G - 2)\sum_{i=1}^{\nu_G}\binom{d_i}{2} +
2\sum_i\binom{d_i}{3} + \sum_{ij \in E(G)}(d_i - 1)(d_j - 1) - N.$$
\end{lem}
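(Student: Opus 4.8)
The plan is to prove all three parts by elementary double counting over small edge-subsets, disposing of (i) and (ii) quickly and then reducing (iii) to a short inclusion--exclusion. For part (i), I would start from the observation that a $2$-matching is exactly an unordered pair of edges with no common endpoint. Since there are $\binom{\varepsilon_G}{2}$ pairs of edges in total, it suffices to subtract those that do share an endpoint. In a simple graph two distinct edges meet in at most one vertex, so each vertex $v$ of degree $d_v$ accounts for exactly $\binom{d_v}{2}$ such adjacent pairs and no pair is counted twice; hence $M_2(G) = \binom{\varepsilon_G}{2} - \sum_i \binom{d_i}{2}$.

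For part (ii), I would simply feed the relevant degree sequences into the formula just obtained. If $T$ is a tree with $\Delta_T = 3$, writing $n_1,n_2,n_3$ for the numbers of vertices of degree $1,2,3$ and combining $\varepsilon_T = \nu_T - 1$ with the handshake identity $n_1 + 2n_2 + 3n_3 = 2(\nu_T - 1)$, one solves $n_2 = \nu_T - 2 - 2k_T$ and $n_3 = k_T$, so that $\sum_i \binom{d_i}{2} = n_2 + 3n_3 = \nu_T - 2 + k_T$; substituting into (i) and simplifying gives $M_2(T) = \frac{\nu_T^2 - 5\nu_T}{2} + 3 - k_T$. The closed-quipu case is identical except that a unicyclic graph satisfies $\varepsilon_G = \nu_G$, which changes the handshake relation to $n_1 + 2n_2 + 3n_3 = 2\nu_G$ and yields $\sum_i \binom{d_i}{2} = \nu_G + k$; inserting this into (i) produces $M_2(G) = \frac{\nu_G^2 - 3\nu_G}{2} - k$.

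The substantial part is (iii), which I would handle by classifying the $3$-subsets of $E(G)$ according to the number $j \in \{0,1,2,3\}$ of \emph{adjacent pairs} (pairs of edges sharing a vertex) they contain, writing $N_j$ for the corresponding counts. Two linear relations are immediate: $N_0 + N_1 + N_2 + N_3 = \binom{\varepsilon_G}{3}$, and, counting incidences between adjacent pairs and the $3$-subsets containing them, $N_1 + 2N_2 + 3N_3 = (\varepsilon_G - 2)\sum_i \binom{d_i}{2}$, since each adjacent pair extends to a $3$-subset in $\varepsilon_G - 2$ ways. As $M_3(G) = N_0$, eliminating $N_1$ gives $M_3(G) = \binom{\varepsilon_G}{3} - (\varepsilon_G - 2)\sum_i \binom{d_i}{2} + N_2 + 2N_3$.

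It remains to evaluate $N_2$ and $N_3$ geometrically. A $3$-subset with all three pairs adjacent is either a star $K_{1,3}$ (three edges through a common vertex, contributing $\sum_i \binom{d_i}{3}$) or a triangle (contributing $N$), so $N_3 = \sum_i \binom{d_i}{3} + N$. A $3$-subset with exactly two adjacent pairs is a path $P_4$, whose middle edge is the one meeting both others; counting these by their middle edge $uw$ gives $(d_u - 1)(d_w - 1)$ choices for the two end edges, from which one discards the cases where both ends land on a common neighbour of $u$ and $w$, since these close into a triangle already tallied in $N_3$. Each triangle on $uw$ produces exactly one discarded configuration, and each triangle is met by three of its edges, so the total correction is $3N$, whence $N_2 = \sum_{uw \in E(G)}(d_u - 1)(d_w - 1) - 3N$. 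Substituting both expressions into the displayed identity and collecting the $N$-terms via $-3N + 2N = -N$ yields exactly the stated formula. The one place demanding care is precisely this triangle bookkeeping: the same triangles occur both inside $N_3$ and as the degenerate ``$P_4$'' configurations removed from $N_2$, and keeping these two contributions consistent is what makes the final constant emerge as $-N$.
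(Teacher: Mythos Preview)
Your argument is correct in all three parts. The paper treats this lemma almost entirely by citation: it gives the same one-line counting for (i), refers to \cite{wang1} for (ii), and to \cite{far} for (iii), so your self-contained inclusion--exclusion (classifying edge-triples by the number of adjacent pairs and handling the triangle overcount) supplies the proof that the paper outsources, and your degree-sequence computations for (ii) are exactly what one would do to unpack those references.
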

\begin{proof} Part (i) is elementary: the number $M_2(G)$ is obtained by subtracting the number of pairs of dependent edges from the total number of pairs of edges. Part (ii) is essentially Lemma 2.10 in \cite{wang1}. For Part (iii), see \cite[Theorem 1]{far}.
\end{proof}

The several types of H-shape trees are depicted in Fig. \ref{Fig3}, where dotted lines correspond to paths with at least two edges. In other words, the set of the H-shape trees is the disjoint union $\bigsqcup_{i=1}^{12} {\mathcal T_i}$, where ${\mathcal T}_1 = \left\{  P_{1,1;a}^{1,a-2} \;  \big\vert \; a \geqslant 5   \right\}$, ${\mathcal T}_2 = \left\{  P_{1,1;4}^{1,2}\right\}$, ${\mathcal T}_3= \left\{  P_{1,1;a}^{1,b} \;  \big\vert \; a \geqslant 6, \; 2<b<a-2   \right\}$. and so on.

 By the above lemma and calculations, we can classify the H-shape trees by means of the number of their 3-matchings.

\begin{cor}\label{3match1} Let $T$ be an H-shape tree with $n$ vertices, and let
$f(n) = (n^3 - 12n^2 + 35n)/6$. Then,
$$M_3(T) =
\begin{cases}
f(n), & \mbox{for} \quad T \in {\mathcal T}_1;\\
f(n) + 1, & \mbox{for}  \quad T \in {\mathcal T}_2 \cup {\mathcal T}_3;\\
f(n) + 2, & \mbox{for}   \quad T \in {\mathcal T}_4 \cup {\mathcal T}_5  \cup {\mathcal T}_6; \\
f(n) + 3, & \mbox{for}   \quad T \in {\mathcal T}_7 \cup {\mathcal T}_8  \cup {\mathcal T}_{9}; \\
f(n) + 4, & \mbox{for}   \quad T \in {\mathcal T}_{10} \cup {\mathcal T}_{11}; \\
f(n) + 5, & \mbox{for}   \quad T \in {\mathcal T}_{12}.
\end{cases}
$$
\end{cor}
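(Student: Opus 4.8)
The goal is the explicit formula for $M_3(T)$ across the twelve families $\mathcal T_1,\dots,\mathcal T_{12}$ of H-shape trees, so the plan is to reduce everything to a single master computation using Lemma~\ref{Match}(iii) and then observe that the only term in that formula which is sensitive to the combinatorial ``shape'' of the tree (i.e. to the placement of the two degree-$3$ vertices along the central path) is $\sum_{ij\in E(T)}(d_i-1)(d_j-1)$. First I would record the invariants common to every H-shape tree on $n$ vertices: it is a tree, so $\varepsilon_T=n-1$ and $N=0$ (no triangles); its degree sequence consists of two $3$'s, four $1$'s, and $n-6$ entries equal to $2$. Hence $\sum_i\binom{d_i}{2}=2\binom{3}{2}+(n-6)\binom{2}{2}=n$ and $\sum_i\binom{d_i}{3}=2\binom{3}{3}=2$. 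Plugging these into Lemma~\ref{Match}(iii) gives
\[
M_3(T)=\binom{n-1}{3}-(n-3)\,n+4+\sum_{ij\in E(T)}(d_i-1)(d_j-1).
\]
A direct expansion shows $\binom{n-1}{3}-n(n-3)+4=\tfrac{n^3-12n^2+35n}{6}+S_0$ for the appropriate constant absorbed into $S_0$; more cleanly, I would just compute the whole right-hand side for the ``generic'' tree $T\in\mathcal T_1$ (the one where the two $3$-vertices are at distance $1$ along $P_a$ with single pendant edges and where $a=n-2$), verify it equals $f(n)=(n^3-12n^2+35n)/6$, and use that as the calibration point.

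\medskip

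The heart of the argument is then the evaluation of $\Sigma(T):=\sum_{ij\in E(T)}(d_i-1)(d_j-1)$. Every edge contributes $1$ unless at least one endpoint is a pendant vertex (contribution $0$) or has degree $3$. So $\Sigma(T)=\bigl(\varepsilon_T-\#\{\text{edges with a pendant endpoint}\}\bigr)+\bigl(\text{corrections at the two degree-}3\text{ vertices}\bigr)$. An H-shape tree has exactly four pendant vertices, hence exactly four pendant edges, except in the degenerate cases where a pendant edge is itself incident to a degree-$3$ vertex; I would handle the bookkeeping by instead classifying, for each of the two degree-$3$ vertices $u$ and $w$, how many of its three neighbours are themselves pendant vertices, how many have degree $2$, and whether the third $3$-vertex is a neighbour (only possible when $u,w$ are adjacent, i.e. in $\mathcal T_1\cup\mathcal T_2$). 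Writing $u$'s neighbours as having degrees $(e_1,e_2,e_3)$, the edges at $u$ contribute $\sum_{k}(3-1)(e_k-1)=2\sum_k(e_k-1)$, and similarly at $w$, with the shared edge (if $uw\in E$) counted once. Carrying this out over the twelve configurations in Fig.~\ref{Fig3} — parametrised by how many of the four ``arms'' emanating from $\{u,w\}$ have length $1$ versus length $\ge2$, and by whether $u,w$ are adjacent or separated by a path of length $\ge2$ — produces $\Sigma(T)-\Sigma(T_{\mathcal T_1})\in\{0,1,2,3,4,5\}$, and the partition into the six value-classes matches exactly the grouping $\{\mathcal T_1\},\{\mathcal T_2,\mathcal T_3\},\{\mathcal T_4,\mathcal T_5,\mathcal T_6\},\{\mathcal T_7,\mathcal T_8,\mathcal T_9\},\{\mathcal T_{10},\mathcal T_{11}\},\{\mathcal T_{12}\}$ claimed in the statement.

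\medskip

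The main obstacle is purely organizational rather than conceptual: one must be certain that the twelve families $\mathcal T_1,\dots,\mathcal T_{12}$ are listed so that within each of the six value-classes the members genuinely share the same local picture at $\{u,w\}$, and that the ``and so on'' in the text before the corollary does not hide a case where, say, a short arm of length $1$ at one $3$-vertex combines with adjacency of $u$ and $w$ to shift the count in an unexpected way. I would therefore first fix notation for the four arm-lengths $(p_1,p_2;q_1,q_2)$ (two arms at $u$, two at $w$, each $\ge1$) and the central distance $d(u,w)\ge1$, note that $n=p_1+p_2+q_1+q_2+d(u,w)+1$ (with a correction when $d(u,w)=1$ so that no vertex is double-counted), enumerate which tuples force a ``degenerate'' incidence, and only then read off $\Sigma$; the cross-check is that substituting into the boxed formula for $M_3(T)$ must reproduce $f(n)$ plus the stated constant for every tuple, and in particular must be independent of $n$ in the correction term, which it will be because $\Sigma$ depends only on the degrees adjacent to $u$ and $w$, not on the arm lengths once they are $\ge2$.
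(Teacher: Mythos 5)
Your proposal is correct and follows essentially the same route as the paper, which proves the corollary by feeding the invariants $\varepsilon_T=n-1$, $N=0$ and the degree sequence $(3^2,2^{n-6},1^4)$ into Lemma~\ref{Match}(iii) and then evaluating $\sum_{ij\in E(T)}(d_i-1)(d_j-1)$ case by case over the twelve local configurations of Fig.~\ref{Fig3}. One small slip: your parenthetical description of the calibration tree in $\mathcal T_1$ (two degree-$3$ vertices at distance $1$) actually describes $\mathcal T_2$; in $\mathcal T_1$ they are at distance at least $2$, though your own check that the calibration yields $f(n)$ would catch this.
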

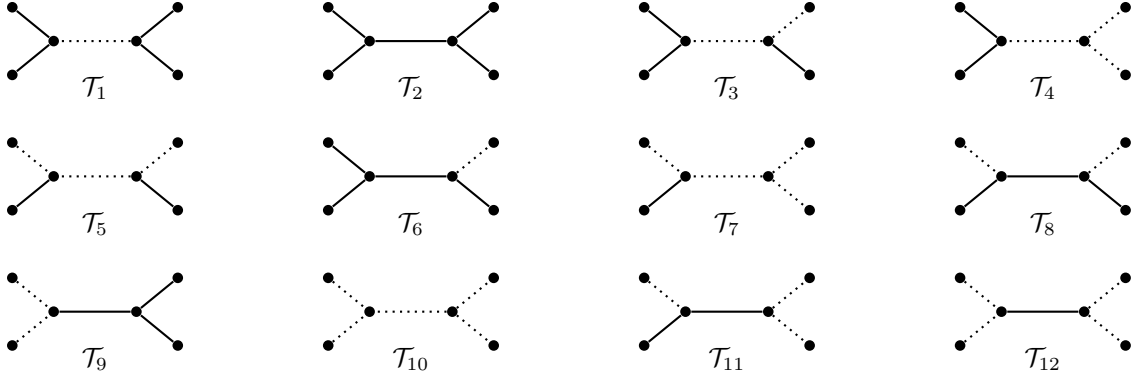
\begin{figure}
	  	\centering

  \begin{tikzpicture}[
vertex3_style/.style={fill,circle,inner sep=0pt,minimum size=4pt},
edge_style/.style={ultra thick, black, drop shadow={opacity=0.1}},
edge_style/.style={ultra thick, black, drop shadow={opacity=0.1}},
nonterminal/.style={
rectangle,
minimum size=2mm,
thin,
draw=black,
top color=white, 
bottom color=white!50!white!50, 
font=\itshape
},
 x=1.1cm,
y=.9cm
]

\node[vertex3_style]   (A1) at (-0.5,-0.5) {};
\node[vertex3_style]   (B1) at (-.5,.5) {};
\node[vertex3_style]   (C1) at (0,0) {};
\node[vertex3_style]   (D1) at (1,0) {};
\node[vertex3_style]   (E1) at (1.5,.5) {};
\node[vertex3_style]   (F1) at (1.5,-.5) {};
\path [thick] (A1) edge (C1);
\path [thick] (B1) edge (C1);
\path [thick, dotted] (D1) edge (C1);
\path [thick] (D1) edge (E1);
\path [thick] (D1) edge (F1);
\node at (.5, -0.7) {\small$\mathcal T_1$};
\begin{scope}[xshift=4.2cm]
\node[vertex3_style]   (A1) at (-0.5,-0.5) {};
\node[vertex3_style]   (B1) at (-.5,.5) {};
\node[vertex3_style]   (C1) at (0,0) {};
\node[vertex3_style]   (D1) at (1,0) {};
\node[vertex3_style]   (E1) at (1.5,.5) {};
\node[vertex3_style]   (F1) at (1.5,-.5) {};
\path [thick] (A1) edge (C1);
\path [thick] (B1) edge (C1);
\path [thick] (D1) edge (C1);
\path [thick] (D1) edge (E1);
\path [thick] (D1) edge (F1);
\node at (.5, -0.7) {\small$\mathcal T_2$};
\end{scope}
\begin{scope}[xshift=8.4cm]
\node[vertex3_style]   (A1) at (-0.5,-0.5) {};
\node[vertex3_style]   (B1) at (-.5,.5) {};
\node[vertex3_style]   (C1) at (0,0) {};
\node[vertex3_style]   (D1) at (1,0) {};
\node[vertex3_style]   (E1) at (1.5,.5) {};
\node[vertex3_style]   (F1) at (1.5,-.5) {};
\path [thick] (A1) edge (C1);
\path [thick] (B1) edge (C1);
\path [thick, dotted] (D1) edge (C1);
\path [thick, dotted] (D1) edge (E1);
\path [thick] (D1) edge (F1);
\node at (.5, -0.7) {\small$\mathcal T_3$};
\end{scope}
\begin{scope}[xshift=12.6cm]
\node[vertex3_style]   (A1) at (-0.5,-0.5) {};
\node[vertex3_style]   (B1) at (-.5,.5) {};
\node[vertex3_style]   (C1) at (0,0) {};
\node[vertex3_style]   (D1) at (1,0) {};
\node[vertex3_style]   (E1) at (1.5,.5) {};
\node[vertex3_style]   (F1) at (1.5,-.5) {};
\path [thick] (A1) edge (C1);
\path [thick] (B1) edge (C1);
\path [thick, dotted] (D1) edge (C1);
\path [thick, dotted] (D1) edge (E1);
\path [thick, dotted] (D1) edge (F1);
\node at (.5, -0.7) {\small$\mathcal T_4$};
\end{scope}
\begin{scope}[yshift=-1.8cm]
\node[vertex3_style]   (A1) at (-0.5,-0.5) {};
\node[vertex3_style]   (B1) at (-.5,.5) {};
\node[vertex3_style]   (C1) at (0,0) {};
\node[vertex3_style]   (D1) at (1,0) {};
\node[vertex3_style]   (E1) at (1.5,.5) {};
\node[vertex3_style]   (F1) at (1.5,-.5) {};
\path [thick] (A1) edge (C1);
\path [thick, dotted] (B1) edge (C1);
\path [thick, dotted] (D1) edge (C1);
\path [thick, dotted] (D1) edge (E1);
\path [thick] (D1) edge (F1);
\node at (.5, -0.7) {\small$\mathcal T_5$};
\end{scope}
\begin{scope}[xshift=4.2cm, yshift=-1.8cm]
\node[vertex3_style]   (A1) at (-0.5,-0.5) {};
\node[vertex3_style]   (B1) at (-.5,.5) {};
\node[vertex3_style]   (C1) at (0,0) {};
\node[vertex3_style]   (D1) at (1,0) {};
\node[vertex3_style]   (E1) at (1.5,.5) {};
\node[vertex3_style]   (F1) at (1.5,-.5) {};
\path [thick] (A1) edge (C1);
\path [thick] (B1) edge (C1);
\path [thick] (D1) edge (C1);
\path [thick, dotted] (D1) edge (E1);
\path [thick] (D1) edge (F1);
\node at (.5, -0.7) {\small$\mathcal T_6$};
\end{scope}
\begin{scope}[xshift=8.4cm, yshift=-1.8cm]
\node[vertex3_style]   (A1) at (-0.5,-0.5) {};
\node[vertex3_style]   (B1) at (-.5,.5) {};
\node[vertex3_style]   (C1) at (0,0) {};
\node[vertex3_style]   (D1) at (1,0) {};
\node[vertex3_style]   (E1) at (1.5,.5) {};
\node[vertex3_style]   (F1) at (1.5,-.5) {};
\path [thick, dotted] (B1) edge (C1);
\path [thick] (A1) edge (C1);
\path [thick, dotted] (D1) edge (C1);
\path [thick, dotted] (D1) edge (E1);
\path [thick, dotted] (D1) edge (F1);
\node at (.5, -0.7) {\small$\mathcal T_7$};
\end{scope}
\begin{scope}[xshift=12.6cm, yshift=-1.8cm]
\node[vertex3_style]   (A1) at (-0.5,-0.5) {};
\node[vertex3_style]   (B1) at (-.5,.5) {};
\node[vertex3_style]   (C1) at (0,0) {};
\node[vertex3_style]   (D1) at (1,0) {};
\node[vertex3_style]   (E1) at (1.5,.5) {};
\node[vertex3_style]   (F1) at (1.5,-.5) {};
\path [thick] (A1) edge (C1);
\path [thick, dotted] (B1) edge (C1);
\path [thick] (D1) edge (C1);
\path [thick, dotted] (D1) edge (E1);
\path [thick] (D1) edge (F1);
\node at (.5, -0.7) {\small$\mathcal T_8$};
\end{scope}
\begin{scope}[yshift=-3.6cm]
\node[vertex3_style]   (A1) at (-0.5,-0.5) {};
\node[vertex3_style]   (B1) at (-.5,.5) {};
\node[vertex3_style]   (C1) at (0,0) {};
\node[vertex3_style]   (D1) at (1,0) {};
\node[vertex3_style]   (E1) at (1.5,.5) {};
\node[vertex3_style]   (F1) at (1.5,-.5) {};
\path [thick, dotted] (A1) edge (C1);
\path [thick, dotted] (B1) edge (C1);
\path [thick] (D1) edge (C1);
\path [thick] (D1) edge (E1);
\path [thick] (D1) edge (F1);
\node at (.5, -0.7) {\small$\mathcal T_9$};
\end{scope}
\begin{scope}[xshift=4.2cm, yshift=-3.6cm]
\node[vertex3_style]   (A1) at (-0.5,-0.5) {};
\node[vertex3_style]   (B1) at (-.5,.5) {};
\node[vertex3_style]   (C1) at (0,0) {};
\node[vertex3_style]   (D1) at (1,0) {};
\node[vertex3_style]   (E1) at (1.5,.5) {};
\node[vertex3_style]   (F1) at (1.5,-.5) {};
\path [thick, dotted] (A1) edge (C1);
\path [thick, dotted] (B1) edge (C1);
\path [thick, dotted] (D1) edge (C1);
\path [thick, dotted] (D1) edge (E1);
\path [thick, dotted] (D1) edge (F1);
\node at (.5, -0.7) {\small$\mathcal T_{10}$};
\end{scope}
\begin{scope}[xshift=8.4cm, yshift=-3.6cm]
\node[vertex3_style]   (A1) at (-0.5,-0.5) {};
\node[vertex3_style]   (B1) at (-.5,.5) {};
\node[vertex3_style]   (C1) at (0,0) {};
\node[vertex3_style]   (D1) at (1,0) {};
\node[vertex3_style]   (E1) at (1.5,.5) {};
\node[vertex3_style]   (F1) at (1.5,-.5) {};
\path [thick] (A1) edge (C1);
\path [thick, dotted] (B1) edge (C1);
\path [thick] (D1) edge (C1);
\path [thick, dotted] (D1) edge (E1);
\path [thick, dotted] (D1) edge (F1);
\node at (.5, -0.7) {\small$\mathcal T_{11}$};
\end{scope}
\begin{scope}[xshift=12.6cm, yshift=-3.6cm]
\node[vertex3_style]   (A1) at (-0.5,-0.5) {};
\node[vertex3_style]   (B1) at (-.5,.5) {};
\node[vertex3_style]   (C1) at (0,0) {};
\node[vertex3_style]   (D1) at (1,0) {};
\node[vertex3_style]   (E1) at (1.5,.5) {};
\node[vertex3_style]   (F1) at (1.5,-.5) {};
\path [thick, dotted] (A1) edge (C1);
\path [thick, dotted] (B1) edge (C1);
\path [thick] (D1) edge (C1);
\path [thick, dotted] (D1) edge (E1);
\path [thick, dotted] (D1) edge (F1);
\node at (.5, -0.7) {\small$\mathcal T_{12}$};
\end{scope}
\end{tikzpicture}
\caption{ \label{Fig3}  \small The several types of H-shape trees}
\end{figure}

At the end of this section, we consider the {\it lowest term} $\vartheta(G)$ of a graph $G$, i.e.\  the non-zero monomial of lowest degree in its characteristic polynomial. Clearly, $\vartheta(G)$ is an algebraic invariant shared by all graphs which are cospectral to $G$. The degree of $\vartheta(G)$ is also equal to the {\it nullity} $\eta(G)$, i.e.\ the multiplicity of $0$ in ${\rm sp}(G)$. The proofs of the following three propositions are postponed to Appendix A.

\begin{prop}\label{lollipop-low}
For each pair $(s,t)$ of positive integers,
$$\vartheta(C_{4s-2\epsilon_1;4t+2\epsilon_2}^0) =
\begin{cases}
-4t(s+t)\lambda^2, & \mbox{if } \quad (\epsilon_1,\epsilon_2) = (0,0); \\
-4,  & \mbox{if } \quad (\epsilon_1,\epsilon_2) = (0,1);\\
2t(2s+2t-1)\lambda^2, & \mbox{if } \quad (\epsilon_1,\epsilon_2) = (1,0);\\
\phantom{-}4, & \mbox{if } \quad (\epsilon_1,\epsilon_2) = (1,1).
\end{cases}
$$
\end{prop}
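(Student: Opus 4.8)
The plan is to reduce everything to paths and cycles, whose low‑order coefficients are given explicitly by Lemma~\ref{path}. Write $C_{m;r}^{0}$ for the lollipop obtained by attaching to vertex $0$ of $C_r$ a pendant path with $m$ edges, say $0v_1v_2\cdots v_m$. Applying the edge version \eqref{schwenky2} of Schwenk's formula to the bridge $e=0v_1$, and using that $e$ lies on no cycle, that $C_{m;r}^{0}-e=C_r\sqcup P_m$, and that $C_{m;r}^{0}-0-v_1=P_{r-1}\sqcup P_{m-1}$, one obtains
\[
\phi(C_{m;r}^{0},\lambda)=\phi(C_r,\lambda)\,\phi(P_m,\lambda)-\phi(P_{r-1},\lambda)\,\phi(P_{m-1},\lambda).
\]
In the case at hand $r=4t+2\epsilon_2$ and $m=4s-2\epsilon_1$ are even, so $r-1$ and $m-1$ are odd.

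The next step is to read off from the formulas in Lemma~\ref{path} the bottom coefficient (and, when it vanishes, the next one) of each factor, using the elementary fact that $\phi(P_n,\lambda)$ and $\phi(C_n,\lambda)$ are combinations of monomials all of the same parity as $n$. For paths, $\vartheta(P_{2k})=(-1)^k$ and $\vartheta(P_{2k+1})=(-1)^k(k+1)\lambda$, whence $\vartheta(P_{4s})=1$, $\vartheta(P_{4s-2})=-1$, $\vartheta(P_{4t-1})=-2t\lambda$, $\vartheta(P_{4t+1})=(2t+1)\lambda$, $\vartheta(P_{4s-1})=-2s\lambda$ and $\vartheta(P_{4s-3})=(2s-1)\lambda$. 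For the even cycles occurring here, $\vartheta(C_{4t+2})=-4$, whereas $\phi(C_{4t},\lambda)$ has vanishing constant term and its expansion begins with $\vartheta(C_{4t})=-\tfrac{4t}{2t+1}\binom{2t+1}{2}\lambda^2=-4t^2\lambda^2$, coming from the $r=2t-1$ term of the cycle formula.

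Finally the four cases are settled by a short computation. If $\epsilon_2=1$, then $\phi(C_r)\phi(P_m)$ has a nonzero constant term, namely $-4$ if $\epsilon_1=0$ and $(-4)(-1)=4$ if $\epsilon_1=1$, while $\phi(P_{r-1})\phi(P_{m-1})$ is divisible by $\lambda^2$ (a product of two series starting in degree $1$); hence $\vartheta(C_{m;r}^{0})$ equals that constant, which is the second and fourth rows of the statement. If $\epsilon_2=0$, the constant terms cancel and the coefficient of $\lambda^2$ in $\phi(C_{m;r}^{0},\lambda)$ equals the coefficient of $\lambda^2$ in $\phi(C_{4t})\phi(P_m)$ minus the product of the linear coefficients of $\phi(P_{4t-1})$ and $\phi(P_{m-1})$; substituting the values above gives $-4t^2-4st=-4t(s+t)$ when $\epsilon_1=0$ and $4t^2+2t(2s-1)=2t(2s+2t-1)$ when $\epsilon_1=1$, i.e.\ the first and third rows.

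The genuinely delicate point, and the one I would treat most carefully, is the $\epsilon_2=0$ subcase: there the leading constant of $\phi(C_{4t})$ really is $0$, so one must be sure to pass to the $\lambda^2$‑term both in the cycle formula and in the full product, and to rule out any lower‑degree contribution. The parity observation — $\phi(C_r)\phi(P_m)$ involves only even powers of $\lambda$, and $\phi(P_{r-1})\phi(P_{m-1})$ only even powers $\geq 2$ — is exactly what makes this bookkeeping automatic.
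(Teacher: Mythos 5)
Your proof is correct and follows essentially the same route as the paper's: both apply the edge version of Schwenk's formula to the bridge joining the pendant path to the cycle, obtaining $\phi(C_{m;r}^0)=\phi(C_r)\phi(P_m)-\phi(P_{r-1})\phi(P_{m-1})$, and then read off the lowest terms of the path and cycle factors from Lemma~\ref{path}. Your explicit attention to the $\epsilon_2=0$ subcase (where the constant term of $\phi(C_{4t})$ vanishes and one must track the $\lambda^2$ coefficient of the full product) is a welcome clarification of a point the paper's table-based bookkeeping handles only implicitly.
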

The next proposition gives the lowest term of the graphs in the family
\begin{equation}\label{tienne}  \mathscr{T}= \{T_n \}_{n \geqslant 6} \qquad \text{where $T_n:= P_{2;n-2}^2$.}
\end{equation}
\begin{prop}\label{T22c-low} Let $\mathscr{T}$ be the family of T-shape graphs defined in \eqref{tienne} and,
for each  integer $n\geqslant 6$, let
let $s$ be the integer such that $n=4s+\epsilon$ with $\epsilon \in \{0,1,2,3\}$. Then, for each $T_n \in \mathscr{T}$,
\begin{equation}\label{tetateta}
\vartheta(T_n) =  \begin{cases}
\phantom{-}1, &\mbox{if{\;}  $\epsilon= 0$;}\\[.5mm]
\phantom{-}(2s+1)\lambda, &\mbox{if{\;}  $\epsilon= 1$;}\\[.5mm]
-1,\phantom{nullnull} &\mbox{if{\;} $\epsilon= 2$;}\\[.5mm]
-2(s+1)\lambda, &\mbox{if{\;}  $\epsilon= 3$;}
\end{cases}
\end{equation}
\end{prop}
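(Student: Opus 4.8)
$\textbf{Proof proposal for Proposition~\ref{T22c-low}.}$

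The plan is to compute $\phi(T_n,\lambda)$ explicitly enough to read off its lowest-degree term, exploiting the recursive structure of $T_n=P_{2;n-2}^2$: a path on the vertices $0,1,\dots,n-3$ (so with $n-2$ vertices) together with a pendant path of two edges attached at vertex $2$. First I would apply the Schwenk formula \eqref{schwenky1} at the degree-$1$ endpoint of the pendant path, or better still the edge version \eqref{schwenky2} at the edge joining vertex $n-3$ to the rest, to obtain a two-term linear recurrence in $n$. Concretely, deleting the pendant endpoint $w$ and its neighbour gives
\[
\phi(T_n) = \lambda\,\phi(T_{n-1}') - \phi(T_{n-1}''),
\]
where the reduced graphs are again of the form ``path plus a $P_2$ attached at vertex $2$'' but with one fewer vertex on the main path; chasing this down I expect to land on a clean recurrence of the shape $\phi(T_n)=\lambda\phi(T_{n-1})-\phi(T_{n-2})$ valid for $n$ large, exactly the Chebyshev-type recurrence satisfied by paths (Lemma~\ref{path}). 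In fact the cleanest route is: delete vertex $n-3$ (a pendant vertex of the main path) via \eqref{schwenky1}; then $\phi(T_n)=\lambda\phi(T_{n-1})-\phi(T_{n-2})$ for $n\geqslant 8$, since $T_n-\{n-3\}=T_{n-1}$ and $T_n-\{n-3,n-4\}=T_{n-2}$ once $n-4\geqslant 4$, i.e.\ the deletions do not disturb the branch vertex.

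Next I would pass to lowest terms. Writing $\vartheta(T_n)=c_n\lambda^{\eta_n}$, the recurrence $\phi(T_n)=\lambda\phi(T_{n-1})-\phi(T_{n-2})$ shows that the nullity either stays put or jumps, depending on whether the lowest terms of $\lambda\phi(T_{n-1})$ and $\phi(T_{n-2})$ have the same degree and cancel. Since $\deg\vartheta(\lambda\phi(T_{n-1}))=\eta_{n-1}+1$, comparison with $\eta_{n-2}$ governs everything, and the expected pattern $\eta_n$ periodic of period $4$ (taking values $0,1,0,1$ as $n\equiv 0,1,2,3\pmod 4$, matching the claimed exponents of $\lambda$ in \eqref{tetateta}) should drop out: when $\eta_{n-1}+1=\eta_{n-2}$ the two lowest terms have equal degree and one checks they do \emph{not} fully cancel (their coefficients, by the induction hypothesis, are $c_{n-1}$ and $c_{n-2}$ and one verifies $c_{n-1}\neq c_{n-2}$ in those cases, so $c_n=-c_{n-2}$ up to sign bookkeeping), whereas when $\eta_{n-1}+1<\eta_{n-2}$ the term from $\lambda\phi(T_{n-1})$ wins and $c_n=c_{n-1}$, $\eta_n=\eta_{n-1}+1$; the remaining case $\eta_{n-1}+1>\eta_{n-2}$ gives $c_n=-c_{n-2}$, $\eta_n=\eta_{n-2}$. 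Feeding the four residue classes through these alternatives reproduces \eqref{tetateta}: the constant cases $\epsilon\in\{0,2\}$ give $\vartheta=\pm1$, and the linear cases $\epsilon\in\{1,3\}$ give coefficients that grow linearly in $s$, namely $(2s+1)\lambda$ and $-2(s+1)\lambda$.

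To pin down the coefficients precisely (signs and the linear growth $2s+1$, $2(s+1)$) I would set up the induction with base cases $n=6,7,8,9$ computed by hand — e.g.\ $T_6=P_{2;4}^2$, for which $\phi$ is a short explicit polynomial — and then verify that the recursion $c_n=-c_{n-2}$ in the ``cancellation-free degree jump'' situations and $c_n=c_{n-1}$ in the pure-jump situations is consistent with the closed forms $c_{4s}=1$, $c_{4s+1}=2s+1$, $c_{4s+2}=-1$, $c_{4s+3}=-2(s+1)$. The arithmetic of the linear cases is where the coefficient $2s+1$ versus $2(s+1)$ distinction is delicate, because it comes from \emph{adding} (not subtracting) two lowest terms of equal degree that happen to have the same sign, so the count accumulates; I would track this through the recurrence $c_{4(s+1)+1}=c_{4s+1}+(\text{contribution from }c_{4s+3})$ carefully. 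The main obstacle, then, is not the structural part — the recurrence is immediate from Schwenk — but the bookkeeping of when lowest-degree terms cancel versus reinforce across the four residue classes, and getting the two linear coefficients exactly right; everything else is a routine induction anchored by small base cases, and one may alternatively cross-check the final formulas against $\phi(P_{n-3})$-type closed forms via Lemma~\ref{path} since $T_n$ differs from a path only by the pendant $P_2$ at vertex $2$.
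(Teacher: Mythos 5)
Your argument is correct in substance, but it takes a genuinely different route from the paper's. The paper applies the edge version of Schwenk's formula, \eqref{schwenky2}, exactly once --- at the edge joining the branch vertex to the quasi-pendant vertex of the short arm --- obtaining the closed decomposition $\phi(T_n)=(\lambda^2-1)\bigl(\phi(P_{n-2})-\lambda\phi(P_{n-5})\bigr)$; since $\vartheta(\lambda^2-1)=-1$, the lowest term of $T_n$ is then read off directly from the known lowest terms of paths (Proposition~\ref{veryzoz}, itself immediate from Lemma~\ref{path}), with no induction on $n$. You instead derive the Chebyshev-type recurrence $\phi(T_n)=\lambda\phi(T_{n-1})-\phi(T_{n-2})$ via \eqref{schwenky1} at the far pendant vertex of the long arm and induct on lowest terms. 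This does work: with $\eta_n=0$ for $n$ even and $\eta_n=1$ for $n$ odd, the degree comparison shows that for even $n$ the term $-\vartheta(T_{n-2})$ dominates, giving $c_{4s}=-c_{4s-2}=1$ and $c_{4s+2}=-c_{4s}=-1$, while for odd $n$ the two candidate lowest terms have equal degree and combine as $c_n=c_{n-1}-c_{n-2}$ --- note that this, and not ``$c_n=-c_{n-2}$ up to sign bookkeeping'' as you wrote, is the correct rule in the equal-degree case --- yielding $c_{4s+1}=1-(-2s)=2s+1$ and $c_{4s+3}=-1-(2s+1)=-2(s+1)$, with no full cancellation since $c_{n-1}\neq c_{n-2}$ throughout. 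Anchored at the base cases $n=6,7$ (where one checks directly that $\vartheta(T_6)=-1$ and $\vartheta(T_7)=-4\lambda$), this reproduces \eqref{tetateta}. The paper's route buys brevity, since everything reduces to the path formula \eqref{path-low}; yours is more self-contained but pays for it with the cancellation bookkeeping you rightly flag as the delicate step, and that is precisely where your draft needs the one correction noted above.
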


\begin{prop}\label{DK-low}
For each integer $n\geqslant 10$, let $H_n$ be the H-shape graph defined in \eqref{accan}, and let $s$ be the integer such that $n=4s+\epsilon$ with $\epsilon \in \{0,1,2,3\}$. Then,
\begin{equation}\label{DKDK}
\vartheta(H_n) =\begin{cases}
\phantom{-}1, &\mbox{if{\;} $n = 4s$;}\\[.5mm]
\phantom{-}(2s+1)\lambda, &\mbox{if{\;}  $n = 4s+1$;}\\[.5mm]
-1,&\mbox{if{\;}  $n = 4s+2$;}\\[.5mm]
-(2s+2)\lambda, &\mbox{if{\;}  $n = 4s+3$.}\\[.3mm]
\end{cases}
\end{equation}
\end{prop}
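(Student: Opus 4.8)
The plan is to exploit that $H_n$, being a tree, is a forest, so that the Sachs Coefficient Theorem collapses $\phi(H_n,\lambda)$ to its matching polynomial
$$\phi(H_n,\lambda)=\sum_{k\ge 0}(-1)^k M_k(H_n)\,\lambda^{\,n-2k}.$$
Hence, writing $m$ for the matching number of $H_n$ (the largest $k$ with $M_k(H_n)\neq 0$), we get $\eta(H_n)=n-2m$ and
$$\vartheta(H_n)=(-1)^{m}\,M_m(H_n)\,\lambda^{\,n-2m},$$
so the whole statement reduces to the combinatorial task of computing $m$ and the number $M_m(H_n)$ of maximum matchings. Throughout I label $H_n$ as in Fig.~\ref{Fig1}: the base path is $0,1,\dots,n-5$, the pendant $P_2$'s are attached at the vertices $2$ and $n-7$, and I write $2',2''$ (resp.\ $(n-7)',(n-7)''$) for the two vertices of the first (resp.\ second) pendant path, with $2\sim 2'\sim 2''$.

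First I would determine $m$. Since trivially $m\le\lfloor n/2\rfloor$, it suffices to display a matching of that size: the outer pendant edges $2'2''$ and $(n-7)'(n-7)''$ together with the consecutive base-path edges $\{01,\,23,\,45,\dots\}$ cover every vertex when $n$ is even and all but the vertex $n-5$ when $n$ is odd, and form $\lfloor n/2\rfloor$ edges either way. Thus $m=\lfloor n/2\rfloor$, so $\eta(H_n)=0$ for $n$ even and $\eta(H_n)=1$ for $n$ odd, which already fixes the $\lambda$-degree of $\vartheta(H_n)$ in each congruence class. For $n$ even a maximum matching is a perfect matching, and I claim it is unique: the leaves $0,\,n-5,\,2'',\,(n-7)''$ force the edges $01$, $(n-6)(n-5)$, $2'2''$, $(n-7)'(n-7)''$; vertex $2$ is then forced onto $3$, which forces $45$, then $67$, and so on, with no freedom left. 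Hence $M_m(H_n)=1$ and $\vartheta(H_n)=(-1)^{n/2}$, equal to $1$ for $n\equiv 0$ and $-1$ for $n\equiv 2\pmod 4$, as required.

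It remains to treat $n$ odd, where $\eta(H_n)=1$. I would invoke that a forest has at most one perfect matching (two of them would produce a cycle in their symmetric difference), and that each maximum matching of $H_n$ omits exactly one vertex $v$ and restricts to a perfect matching of $H_n-v$, and conversely; therefore
$$M_m(H_n)=\#\{\,v\in V(H_n)\ :\ H_n-v \text{ has a perfect matching}\,\}.$$
To evaluate the right-hand side I go vertex by vertex. If $v\in\{1,\,n-6,\,2',\,(n-7)'\}$, then $H_n-v$ has an isolated vertex, hence no perfect matching. For every other $v$, deleting $v$ breaks $H_n$ into components each of which is a path or a T-shape graph $T_k$, and the decisive fact is that $P_k$ and $T_k$ possess a perfect matching precisely when $k$ is even — for $T_k$ this is the identity $\eta(T_k)=0$, which by Proposition~\ref{T22c-low} holds if and only if $k$ is even. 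A pass through the cases then shows that $H_n-v$ has a perfect matching exactly when $v\in\{2'',\,(n-7)''\}$ or $v$ is an even-labelled vertex of the base path, and there are $2+\bigl(\tfrac{n-5}{2}+1\bigr)=\tfrac{n+1}{2}$ such $v$. Consequently $\vartheta(H_n)=(-1)^{(n-1)/2}\,\tfrac{n+1}{2}\,\lambda$, which is $(2s+1)\lambda$ for $n=4s+1$ and $-(2s+2)\lambda$ for $n=4s+3$.

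I expect the main obstacle to be precisely this last case analysis: near the branch vertices $2$ and $n-7$ the components of $H_n-v$ are genuine T-shape graphs rather than plain paths, so one must feed Proposition~\ref{T22c-low} the correct parameters, and the first few values of $n$ — where the middle path joining the two branch vertices is short and some components degenerate — should be checked directly. An alternative, matching-free route: applying Schwenk's identity \eqref{schwenky1} at a degree-$2$ vertex of the middle path gives, for $n\geq 13$,
$$\phi(H_n)=\bigl(\lambda\phi(P_5)-\phi(P_2)^2\bigr)\,\phi(T_{n-6})-\phi(P_5)\,\phi(T_{n-7})$$
with $\lambda\phi(P_5)-\phi(P_2)^2=\lambda^6-5\lambda^4+5\lambda^2-1$, from which $\vartheta(H_n)$ follows via $\vartheta(T_{n-6})$ and $\vartheta(T_{n-7})$ (Proposition~\ref{T22c-low}, with a little care when $n$ is odd, as then the two summands contribute to the same power of $\lambda$), after the base cases $n\in\{10,11,12\}$ are handled by direct computation.
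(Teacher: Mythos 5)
Your argument is correct, and your primary route is genuinely different from the paper's. The paper applies the edge version of Schwenk's formula \eqref{schwenky2} to the edge joining a branch vertex of $H_n$ to its quasi-pendant neighbour, obtaining $\phi(H_n)=(\lambda^2-1)\left(\phi(T_{n-2})-\lambda\phi(T_{n-5})\right)$ for all $n>10$, and then reads off $\vartheta(H_n)$ as the lowest term of $\lambda\vartheta(T_{n-5})-\vartheta(T_{n-2})$ via Proposition~\ref{T22c-low}; your ``alternative, matching-free route'' at the end is essentially this same idea applied one vertex further along the middle path (and it needs $n\geqslant 13$ plus three base cases instead of one). Your main proof, by contrast, never manipulates characteristic polynomials: it uses that for a forest $\phi$ coincides with the matching polynomial, so $\vartheta(H_n)=(-1)^m M_m(H_n)\lambda^{n-2m}$, identifies $m=\lfloor n/2\rfloor$, and counts maximum matchings through the uniqueness of perfect matchings in forests. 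This buys a transparent combinatorial meaning for the coefficients (they literally count near-perfect matchings, and your values agree with the explicit $g_{10}$ and $g_{11}$ in \eqref{ast}), at the price of a vertex-by-vertex case analysis; the paper's route is shorter and mechanical once Proposition~\ref{T22c-low} is available, and it reuses the same two-line template as the proofs of Propositions~\ref{lollipop-low} and~\ref{T22c-low}. One small inaccuracy in your sketch: for $v\in\{0,\,n-5,\,2'',\,(n-7)''\}$ the graph $H_n-v$ is connected and is neither a path nor a $T_k$, so the blanket claim that the components of $H_n-v$ are always paths or $T_k$'s fails there; those four vertices must instead be handled by the direct leaf-forcing argument you already use in the even case. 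Since all four do admit perfect matchings, your final count $(n+1)/2$, and hence the stated formula, is unaffected.
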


\section{Divisibility between graphs}\label{S3}

Despite its manifest applications in several areas of spectral graph theory, divisibility between graphs  has not yet attracted the attention it deserves. In fact, literature on the subject is still scarce. We can only mention the paper  \cite{gha1} by Ghareghani et al., in which the divisibility of a starlike tree by a path is investigated. Here, we   establish a necessary
and sufficient condition for a larger class of graphs to be divisible by a path.
\begin{Def} {\rm A graph $G$ is said to be recursive if it belongs to a sequence of graphs $\{ G_n\}_{n \geqslant h}$ such that $\nu_{G_n}=n$ and
\begin{equation}\label{recu}\phi(G_{n+2})= \lambda     \phi(G_{n+1}) -  \phi(G_{n}).
\end{equation}}
\end{Def}

From  \eqref{schwenky1} it is easily seen that the existence of a pendant vertex  $v \in V(G)$ is a sufficient condition for $G$ to be recursive. If fact, if $u$ is the only neighbour of $v$ in $G$, we can consider the sequence $\{F_n\}_{n\geqslant \nu_G-2}$
such that $F_{\nu_G-2}=G-u-v$,  $F_{\nu_G-1}=G-v$, $F_{\nu_G}=G$, and $F_n$
for $n> \nu_G$ is obtained from $G$ by the coalescence of $v$ with an ending vertex of the path $P_{n-\nu_G+1}$. Yet, as  Proposition~\ref{p22n-4} in this section will show, not all families of recursive graphs are obtained by
attaching to a fixed graph a path of arbitrarily large length.
\begin{lem}\label{pathprime}
For each $n>1$ the polynomials $\phi(P_n)$ and $\phi(P_{n-1})$ are coprime.
\end{lem}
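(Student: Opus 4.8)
The plan is to argue by contradiction using the explicit product formula for $\phi(P_n)$ recorded in Lemma~\ref{path}, namely $\phi(P_n,\lambda)=\prod_{i=1}^n\bigl(\lambda-2\cos\tfrac{\pi i}{n+1}\bigr)$. Since $\mathbb{R}[\lambda]$ is a Euclidean domain, it suffices to show that $\phi(P_n)$ and $\phi(P_{n-1})$ have no common root. A common root $\alpha$ would be a number of the form $2\cos\tfrac{\pi i}{n+1}=2\cos\tfrac{\pi j}{n}$ for some $1\le i\le n$ and $1\le j\le n-1$; equivalently $\tfrac{i}{n+1}=\tfrac{j}{n}$ (the cosines agree only when the arguments agree, as both lie in $(0,\pi)$). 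This forces $in=j(n+1)$, so $n\mid j(n+1)$, hence $n\mid j$ since $\gcd(n,n+1)=1$. But $1\le j\le n-1$ makes $n\mid j$ impossible, a contradiction. Therefore no common root exists and the two polynomials are coprime.

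An alternative, perhaps cleaner, route is purely recursive and avoids trigonometry: from the three-term recurrence $\phi(P_{n+1})=\lambda\,\phi(P_n)-\phi(P_{n-1})$ together with the base cases $\phi(P_0)=1$, $\phi(P_1)=\lambda$, one shows by induction on $n$ that $\gcd\bigl(\phi(P_n),\phi(P_{n-1})\bigr)=1$ in $\mathbb{Q}[\lambda]$. Indeed, if $d(\lambda)$ divides both $\phi(P_n)$ and $\phi(P_{n-1})$, then by the recurrence $d(\lambda)$ divides $\phi(P_{n-2})=\lambda\,\phi(P_{n-1})-\phi(P_n)$, and descending we reach $d(\lambda)\mid\gcd(\phi(P_1),\phi(P_0))=\gcd(\lambda,1)=1$, so $d$ is a unit. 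This is the Euclidean-algorithm observation that consecutive terms of the Chebyshev-like recursion stay coprime, with the computation initialized at the pair $(\phi(P_1),\phi(P_0))=(\lambda,1)$.

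The argument is entirely elementary; there is no genuine obstacle. The only point requiring a word of care is the justification, in the first approach, that $2\cos\theta=2\cos\theta'$ with $\theta,\theta'\in(0,\pi)$ forces $\theta=\theta'$, which is immediate from the strict monotonicity of $\cos$ on $[0,\pi]$; in the second approach one must merely be careful that the descent terminates at $n=1$ with the correct base pair. I would present the recursive version, as it is shortest and meshes with the recurrence notation already in play in this section.
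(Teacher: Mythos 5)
Your second (recursive) argument is exactly the paper's proof: the authors derive $\phi(P_n)=\lambda\phi(P_{n-1})-\phi(P_{n-2})$ from Schwenk's formula and then induct from the coprime base pair $\phi(P_0)=1$, $\phi(P_1)=\lambda$, which is the Euclidean descent you describe. Your first, root-based argument via $in=j(n+1)$ and $\gcd(n,n+1)=1$ is also correct, but since you yourself would present the recursive version, the proposal matches the paper's approach.
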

\begin{proof} Lemma \ref{formula1} allows to retrieve quite easily the well-known recursive formula
\begin{equation}\label{path-poly}
\phi(P_n) = \lambda\phi(P_{n-1}) - \phi(P_{n-2}).
\end{equation}
which holds for all $n\geqslant 2$. Since $\phi(P_0)=1$ and $\phi(P_1)=\lambda$ are coprime, the result follows from \eqref{path-poly} by using a  straightforward inductive argument on $n$.
\end{proof}
\begin{lem}\label{mink} Let $\{ g_n(\lambda)\}_{n \geqslant 0}$ be a sequence of polynomials, whose elements satisfy
\begin{equation}\label{recupoly} g_{n+2}(\lambda)= \lambda     g_{n+1}(\lambda) -  g_{n}(\lambda) \qquad
\text{for all $n \geqslant 0$.}\end{equation}
 Then,\smallskip

\noindent $\mathrm{(i)}$
$g_n(\lambda) = \phi(P_k)\, g_{n-k}(\lambda) - \phi(P_{k-1})\, g_{n-k-1}(\lambda)$ for $1 \leqslant k \leqslant n-1$;\smallskip

\noindent $\mathrm{(ii)}$
for each positive $i$, $\phi(P_n) \mid g_{n+1+i}(\lambda)$ if and only if $\phi(P_n) \mid
g_i (\lambda)$.
\end{lem}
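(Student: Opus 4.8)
The plan is to establish part (i) by induction on $k$, using in tandem the recurrence \eqref{recupoly} satisfied by the $g_n$'s and the analogous recurrence \eqref{path-poly} for the path polynomials; part (ii) will then drop out of the $k=n$ instance of (i) combined with the coprimality recorded in Lemma \ref{pathprime}.

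For part (i), I would fix $n$ and induct on $k\in\{1,\dots,n-1\}$. The base case $k=1$ is nothing but \eqref{recupoly}, since $\phi(P_1)=\lambda$ and $\phi(P_0)=1$ give $\phi(P_1)g_{n-1}-\phi(P_0)g_{n-2}=\lambda g_{n-1}-g_{n-2}=g_n$. For the inductive step, assuming $g_n=\phi(P_k)g_{n-k}-\phi(P_{k-1})g_{n-k-1}$ with $k+1\leqslant n-1$, I would substitute $g_{n-k}=\lambda g_{n-k-1}-g_{n-k-2}$ (legitimate because $n-k\geqslant 2$), regroup the terms multiplying $g_{n-k-1}$, and recognise $\lambda\phi(P_k)-\phi(P_{k-1})=\phi(P_{k+1})$ via \eqref{path-poly}; this yields exactly $g_n=\phi(P_{k+1})g_{n-k-1}-\phi(P_k)g_{n-k-2}$, closing the induction.

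For part (ii), I would apply (i) with $k=n$ to the index $n+1+i$ — permissible since $i\geqslant 1$ forces $n\leqslant (n+1+i)-1$ — obtaining the single identity
\[
g_{n+1+i}(\lambda)=\phi(P_n)\,g_{i+1}(\lambda)-\phi(P_{n-1})\,g_i(\lambda).
\]
From here the ``if'' direction is immediate: if $\phi(P_n)\mid g_i$ then both summands on the right are multiples of $\phi(P_n)$. For the ``only if'' direction, $\phi(P_n)\mid g_{n+1+i}$ forces $\phi(P_n)\mid\phi(P_{n-1})g_i$; since $\phi(P_n)$ is monic one may pass to the PID $\mathbb{Q}[\lambda]$, where Lemma \ref{pathprime} furnishes $a,b$ with $a\phi(P_n)+b\phi(P_{n-1})=1$, so that $g_i=a\phi(P_n)g_i+b\bigl(\phi(P_{n-1})g_i\bigr)$ is a multiple of $\phi(P_n)$, and Gauss's lemma transports the conclusion back to $\mathbb{Z}[\lambda]$ should one wish.

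There is no genuine difficulty to overcome: the only things demanding attention are keeping the subscripts non-negative and the range of $k$ consistent throughout the induction in part (i), so that \eqref{recupoly} is always applicable, and being explicit about the ring underlying the Bézout step in part (ii) — which is harmless once one remembers that path polynomials are monic.
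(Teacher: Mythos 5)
Your proof is correct and follows essentially the same route as the paper: part (i) by induction on $k$ using \eqref{recupoly} together with the path recurrence $\phi(P_{k+1})=\lambda\phi(P_k)-\phi(P_{k-1})$, and part (ii) by specializing (i) and invoking the coprimality of consecutive path polynomials from Lemma \ref{pathprime}. The only (immaterial) differences are that you unfold the innermost term $g_{n-k}$ rather than $g_{n-1},g_{n-2}$ in the inductive step, and in (ii) you take $k=n$ (isolating $\phi(P_{n-1})g_i$) where the paper takes $k=n+1$ (isolating $\phi(P_{n+1})g_i$).
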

\begin{proof} For sake of conciseness we set $g_i:=g_i(\lambda)$.
We prove (i) arguing inductively on $k$. Since $\phi(P_0) = 1$ and $\phi(P_1) = \lambda$, for $k=1$ (i) immediately comes from \eqref{recupoly}.
 Assume now that (i) holds for $k \leqslant l-1 <n-2$.
By \eqref{recu} and the induction hypothesis,
\begin{align*}
g_n  &= \lambda g_{n-1} - g_{n-2}\\[.2em]
&= \lambda \left(\phi(P_{l-1})g_{n-l} - \phi(P_{l-2}) g_{n-l-1}\right)  - \left(\phi(P_{l-2})g_{n-l}  - \phi(P_{l-3})
g_{n-l-1}\right)\\[.2em]
&= \left(\lambda\phi(P_{l-1}) - \phi(P_{l-2}) \right) g_{n-l} -
\left(\lambda\phi(P_{l-2}) - \phi(P_{l-3}) \right) g_{n-l-1} \\[.2em]
&= \phi(P_l)g_{n-l} - \phi(P_{l-1})g_{n-l-1},
\end{align*}
which finishes the proof of (i). Note that (i) implies the identity of polynomials
$$g_{n+1+i}=
\phi(P_{n+1})g_i - \phi(P_n)g_{i-1} \qquad (i \geqslant 2),$$ from which (ii) follows immediately since, by Lemma~\ref{pathprime}, $\phi(P_n)$ and $\phi(P_{n+1})$ are coprime.
\end{proof}
\begin{thm}\label{divi1}
Let $\{ G_n\}_{n \geqslant h}$ be a sequence of recursive graphs, whose elements satisfy \eqref{recu}, and let $g_n := \phi(G_n,\lambda)$. Then,\smallskip

\noindent $\mathrm{(i)}$
$g_n = \phi(P_k)\, g_{n-k} - \phi(P_{k-1})\, g_{n-k-1}$ for $1 \leqslant k \leqslant n-1-h$;\smallskip

\noindent $\mathrm{(ii)}$
for each integer $i\geqslant 2$, $\phi(P_n) \mid g_{n+1+i}$ if and only if $\phi(P_n) \mid
g_i $.
\end{thm}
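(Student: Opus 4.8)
The plan is to reduce Theorem~\ref{divi1} directly to Lemma~\ref{mink}. By definition of a recursive graph, the sequence $\{g_n\}_{n \geqslant h}$ of characteristic polynomials $g_n = \phi(G_n,\lambda)$ satisfies $g_{n+2} = \lambda g_{n+1} - g_n$ for all $n \geqslant h$; this is exactly the recurrence \eqref{recupoly} studied in Lemma~\ref{mink}, the only discrepancy being that there the sequence is indexed from $0$ while here it is indexed from $h$. So the whole content of the theorem is already carried by Lemma~\ref{mink}, and what remains is merely a shift of index.

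Concretely, I would first set $\widetilde g_m := g_{m+h}$ for $m \geqslant 0$. Then $\{\widetilde g_m\}_{m \geqslant 0}$ satisfies \eqref{recupoly}, so Lemma~\ref{mink}(i) gives $\widetilde g_m = \phi(P_k)\,\widetilde g_{m-k} - \phi(P_{k-1})\,\widetilde g_{m-k-1}$ for $1 \leqslant k \leqslant m-1$. Substituting $m = n-h$ and unwinding the definition of $\widetilde g$ yields part~(i), the constraint $1 \leqslant k \leqslant m-1$ turning precisely into $1 \leqslant k \leqslant n-1-h$.

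For part~(ii) I would argue as in the proof of Lemma~\ref{mink}(ii): applying part~(i) (already established) with $k$ replaced by $n+1$ and the running index equal to $n+1+i$ gives the polynomial identity $g_{n+1+i} = \phi(P_{n+1})\, g_i - \phi(P_n)\, g_{i-1}$, valid whenever all indices lie in the admissible range. Since $\phi(P_n)$ trivially divides $\phi(P_n)\, g_{i-1}$, this identity shows that $\phi(P_n) \mid g_{n+1+i}$ if and only if $\phi(P_n) \mid \phi(P_{n+1})\, g_i$; and by Lemma~\ref{pathprime} the polynomials $\phi(P_n)$ and $\phi(P_{n+1})$ are coprime, so this is equivalent to $\phi(P_n) \mid g_i$.

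There is no real obstacle in this argument: the only point that demands a bit of attention is the bookkeeping of the index ranges under the shift $m \mapsto m+h$, in order to be sure that every $g_j$ occurring in the identities has $j \geqslant h$, hence is genuinely the characteristic polynomial of a member of the family $\{G_n\}_{n\geqslant h}$.
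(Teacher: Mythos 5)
Your proposal is correct and coincides with the paper's own argument: the paper proves Theorem~\ref{divi1} in one line, declaring it ``immediate from \eqref{recu} and Lemma~\ref{mink}'', which is exactly your reduction via the index shift $m\mapsto m+h$. Your write-up merely makes explicit the bookkeeping (and the repetition of the coprimality argument from Lemma~\ref{pathprime} for part~(ii)) that the paper leaves implicit.
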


\begin{proof}
Immediate from \eqref{recu} and Lemma~\ref{mink}.
\end{proof}

\begin{prop}\label{p22n-4} The set $\mathscr{H} = \left \{ H_n \right\}_{n \geqslant 10}$ defined in \eqref{accan} is a sequence of recursive graphs. In other words,
\begin{equation}\label{nah}\phi(H_n) = \lambda\phi(H_{n-1}) - \phi(H_{n-2}) \quad \text{for $n \geqslant 12$}.
\end{equation}
\end{prop}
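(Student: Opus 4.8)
The plan is to reduce the identity~\eqref{nah} to the (immediate) recursiveness of the T-shape family $\mathscr{T}=\{T_n\}_{n\geqslant 6}$ from~\eqref{tienne}. First I would relabel the vertices of $H_n$ in the symmetric fashion of Fig.~\ref{Fig1}: write the base path as $w_1w_2\cdots w_{n-4}$, with a pendant $P_2$ hung at $w_3$ and a pendant $P_2$ (say with vertices $p_1,p_2$, where $w_{n-6}\sim p_1\sim p_2$) hung at $w_{n-6}$. For $n\geqslant 12$ the two branch vertices $w_3,w_{n-6}$ are distinct, $w_{n-7}$ has degree $2$, and the part of $H_n$ sitting to the right of $w_{n-6}$ is rigid, namely the path $p_2\,p_1\,w_{n-6}\,w_{n-5}\,w_{n-4}\cong P_5$.

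Next I would apply Schwenk's edge formula~\eqref{schwenky2} to the edge $e=w_{n-7}w_{n-6}$. As $H_n$ is a tree there are no cyclic terms, so $\phi(H_n)=\phi(H_n-e)-\phi(H_n-w_{n-7}-w_{n-6})$. Deleting $e$ disconnects $H_n$ into the $P_5$ above and the path $w_1\cdots w_{n-7}$ carrying the pendant $P_2$ at $w_3$, which is $P_{2;n-7}^{2}=T_{n-5}$; deleting both $w_{n-7}$ and $w_{n-6}$ leaves $P_{2;n-8}^{2}=T_{n-6}$ together with the two now-detached copies of $P_2$. This gives, for $n\geqslant 12$,
\begin{equation}\label{keyHn}
\phi(H_n)=\phi(P_5)\,\phi(T_{n-5})-\phi(P_2)^2\,\phi(T_{n-6}).
\end{equation}

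Now $\mathscr{T}$ is a recursive family: $T_n=P_{2;n-2}^{2}$ has a pendant vertex at the outer end of its spine, so~\eqref{schwenky1} gives $\phi(T_n)=\lambda\phi(T_{n-1})-\phi(T_{n-2})$; running this backwards one and two steps below $n=6$ forces $\phi(T_5)=\phi(P_5)$ and $\phi(T_4)=\phi(P_2)^2$ (exactly the graphs the Schwenk reduction of $T_7$ and $T_6$ produces), so the three-term recursion holds for all $h\geqslant 6$, and with these conventions~\eqref{keyHn} is in fact valid already from $n\geqslant 10$. It then remains to substitute: since~\eqref{keyHn} writes $\phi(H_n)$ as a $\mathbb{Z}[\lambda]$-linear combination of two shifted copies of the recursive sequence $\{\phi(T_h)\}$ with coefficients independent of $n$, it satisfies the same recursion. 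Explicitly, for $n\geqslant 12$,
\[
\lambda\phi(H_{n-1})-\phi(H_{n-2})=\phi(P_5)\bigl(\lambda\phi(T_{n-6})-\phi(T_{n-7})\bigr)-\phi(P_2)^2\bigl(\lambda\phi(T_{n-7})-\phi(T_{n-8})\bigr)=\phi(P_5)\phi(T_{n-5})-\phi(P_2)^2\phi(T_{n-6})=\phi(H_n).
\]

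The only point requiring care is bookkeeping: one must correctly identify the connected components of $H_n-e$ and of $H_n-w_{n-7}-w_{n-6}$ — which is precisely what pins down the hypothesis $n\geqslant 12$, so that $w_3$ remains an interior branch vertex of the left fragment — together with the two genuinely small cases $n=12,13$, where the indices $n-5,\dots,n-8$ descend to $5$ or $4$ and the conventions $T_5=P_5$, $T_4=P_2\sqcup P_2$ must be invoked (equivalently, $\phi(H_{12})$ and $\phi(H_{13})$ can be computed directly from~\eqref{schwenky2} and checked against $\lambda\phi(H_{11})-\phi(H_{10})$ and $\lambda\phi(H_{12})-\phi(H_{11})$). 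I do not anticipate any genuine obstacle beyond this routine case-management.
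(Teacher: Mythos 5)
Your proof is correct and follows essentially the same route as the paper: the paper applies Schwenk's vertex formula at the branch vertex $n-7$ to get $\phi(H_n)=\bigl(\lambda\phi(P_2)^2-2\phi(P_1)\phi(P_2)\bigr)\phi(T_{n-5})-\phi(P_2)^2\phi(T_{n-6})$, which is exactly your identity since $\lambda\phi(P_2)^2-2\phi(P_1)\phi(P_2)=\phi(P_5)$, and then substitutes the three-term recursion for the $T_h$'s and regroups just as you do. The only cosmetic difference is at the boundary: the paper disposes of $n=12,13$ by direct computation and runs the main argument for $n\geqslant 14$, whereas you extend the $T$-sequence downward with the conventions $\phi(T_5)=\phi(P_5)$, $\phi(T_4)=\phi(P_2)^2$, which is an equally valid bookkeeping choice.
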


\begin{proof} Along the proof, we make use of the graphs in the family $\mathscr{T}$ defined in \eqref{tienne}. Equation~\eqref{nah} follows from a direct computation when $n \in \{12,13\}$ .
Let now $n \geqslant 14$. Applying \eqref{schwenky1} with respect to the vertex with label $n-7$ in Fig. \ref{Fig1}, we obtain
\begin{align*}
\phi(H_n) &=  \left(\lambda\phi(P_2)^2-2\phi(P_1)\phi(P_2)\right)\phi(T_{n-5})-\phi(P_2)^2\phi(T_{n-6})\\[.2em]
&=\left(\lambda\phi(P_2)^2-2\phi(P_1)\phi(P_2)\right)\left(\lambda\phi(T_{n-6})-\phi(T_{n-7})\right)
-
\phi(P_2)^2\left(\lambda\phi(T_{n-7})-\phi(T_{n-8})\right)\\[.2em]
&=\lambda\left((\lambda\phi(P_2)^2-2\phi(P_1)\phi(P_2))\phi(T_{n-6})-\phi(P_2)^2\phi(T_{n-7})\right)\\
&\phantom{aaaaaaaaaaaaaa}-\left((\lambda\phi(P_2)^2-2\phi(P_1)\phi(P_2))\phi(T_{n-7})-\phi(P_2)^2\phi(T_{n-8})\right)\\[.2em]
&=\lambda\phi(H_{n-1}) - \phi(H_{n-2}).
\end{align*}
Thus, \eqref{nah} is proved.
\end{proof}
In the next proposition, we consider the sequence of polynomials $\mathcal R :=\{g_i\}_{i\geqslant 0}$, where
\begin{align}
g_0  &:= -\lambda^{14} + 9\lambda^{12} -28\lambda^{10} +
35\lambda^8 -13\lambda^6 - 5\lambda^4 + 2\lambda^2 + 1, \nonumber\\
g_1 &:= -\lambda^{13} + 8\lambda^{11} - 21\lambda^9 +
20\lambda^7 - 3\lambda^5 - 4\lambda^3 + \lambda,\nonumber\\
g_2 &:= -\lambda^{12} + 7\lambda^{10} -
15\lambda^8 + 10\lambda^6 + \lambda^4 - \lambda^2 - 1,\nonumber\\
g_3 &:= -\lambda^{11} + 6\lambda^9 -
10\lambda^7 + 4\lambda^5 + 3\lambda^3 - 2\lambda,\nonumber\\
g_4  &:= -\lambda^{10} +
5\lambda^8 - 6\lambda^6 + 2\lambda^4 - \lambda^2 + 1,\nonumber\\
g_5  &:= -\lambda^9 + 4\lambda^7 - 2\lambda^5 - 4\lambda^3 +
3\lambda, \label{ast}\\
g_6  &:= -\lambda^8 + 4\lambda^6 - 6\lambda^4 + 4\lambda^2 - 1,\nonumber\\
g_7  &:= -4\lambda^5 + 8\lambda^3 - 4\lambda,\nonumber\\
g_8  &:= \lambda^8 - 8\lambda^6 + 14\lambda^4 - 8\lambda^2 + 1,\nonumber\\
g_9 &:= \lambda^9 -
8\lambda^7 + 18\lambda^5 - 16\lambda^3 + 5\lambda, \nonumber \\
g_{10}&:= \phi(H_{10})=\lambda^{10}-9\lambda^8+26\lambda^6-30\lambda^4+13\lambda^2-1, \nonumber \\
g_{11}&:= \phi(H_{11})=\lambda^{11}-10\lambda^9+34\lambda^7-48\lambda^5+29\lambda^3-6\lambda, \nonumber \\
g_n  &:= \phi(H_n){\;}\mbox{for}{\;}n \geq 12. \nonumber
\end{align}
\begin{lem}\label{mink2} The polynomials of the sequence $\mathcal R$  satisfy \eqref{recupoly}.
\end{lem}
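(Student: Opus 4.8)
The plan is to verify the recurrence \eqref{recupoly} for the sequence $\mathcal R=\{g_i\}_{i\geqslant 0}$ by separating the index $n$ into two ranges. For $n\geqslant 10$ there is in fact nothing new to do: by the definitions in \eqref{ast} one has $g_{10}=\phi(H_{10})$ and $g_{11}=\phi(H_{11})$, while $g_m=\phi(H_m)$ for every $m\geqslant 12$; hence, for $n\geqslant 10$, the three polynomials $g_{n+2},g_{n+1},g_n$ are exactly $\phi(H_{n+2}),\phi(H_{n+1}),\phi(H_n)$, and the identity $g_{n+2}=\lambda g_{n+1}-g_n$ is precisely \eqref{nah} read at the index $n+2\geqslant 12$. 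Since \eqref{nah} was established in Proposition~\ref{p22n-4}, all these cases are settled (including the boundary case $n=10$, where the explicitly listed $g_{10},g_{11}$ first meet the range governed by that proposition).

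It then remains to check the finitely many identities $g_{n+2}=\lambda g_{n+1}-g_n$ for $0\leqslant n\leqslant 9$, i.e.\ the ten relations $g_2=\lambda g_1-g_0,\ g_3=\lambda g_2-g_1,\ \dots,\ g_{11}=\lambda g_{10}-g_9$. Here $g_0,\dots,g_{11}$ are all given by the explicit closed forms in \eqref{ast}, so each relation reduces to a short polynomial computation: multiply the middle polynomial by $\lambda$, subtract the preceding one, and match the result with the succeeding one term by term. For example, $\lambda g_8-g_7=\lambda(\lambda^8-8\lambda^6+14\lambda^4-8\lambda^2+1)-(-4\lambda^5+8\lambda^3-4\lambda)=\lambda^9-8\lambda^7+18\lambda^5-16\lambda^3+5\lambda=g_9$, and the remaining nine cases are entirely analogous. (Equivalently, one may observe that $g_0,\dots,g_9$ are precisely what one obtains by running \eqref{recupoly} backwards from the initial data $g_{11}=\phi(H_{11})$, $g_{10}=\phi(H_{10})$, which makes the consistency automatic; but the direct coefficient check is the most transparent to present.)

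I do not expect a genuine obstacle here: the content is just the combination of Proposition~\ref{p22n-4} with ten routine expansions of the polynomials in \eqref{ast}. The only point requiring care is the bookkeeping at the junction $n=10$, to be sure that the explicitly listed polynomials $g_{10}$ and $g_{11}$ are consistent both with the backward continuation (this is the case $n=9$ of the direct check, which yields $g_{11}=\lambda g_{10}-g_9$) and with Proposition~\ref{p22n-4} (which supplies $g_{12}=\lambda g_{11}-g_{10}$). Accordingly, I would organize the proof as: (1) note that for $n\geqslant 10$ the claim coincides with \eqref{nah}; (2) verify the ten remaining identities directly from the closed forms in \eqref{ast}.
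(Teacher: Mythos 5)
Your proposal is correct and follows essentially the same route as the paper: the cases $n\geqslant 10$ are reduced to \eqref{nah} (Proposition~\ref{p22n-4}), and the cases $0\leqslant n\leqslant 9$ are settled by direct inspection of the explicit polynomials in \eqref{ast}. Your extra care at the junction $n=10$ and the sample verification $\lambda g_8-g_7=g_9$ are consistent with, and slightly more detailed than, the paper's two-line argument.
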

\begin{proof} For $0 \leqslant n \leqslant 9$ it suffices a direct inspection. For $n \geqslant 10$, the equality $\eqref{recupoly}$ is equivalent to \eqref{nah}.
\end{proof}

\begin{prop}\label{divi2}
For $m \geqslant 1$ and $n \geqslant 10$, $P_m$ divides  $H_n$ if and only if
$$ (m,n) \in \left\{ (1,2k+1) \mid k \geqslant 5 \right\} \cup \left\{ (2,n) \mid n \geqslant 10 \right\} \cup \left\{ (5,6k+5) \mid k \geqslant 1 \right\}.$$
\end{prop}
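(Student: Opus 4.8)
The plan is to exploit Theorem~\ref{divi1}(ii) applied to the recursive sequence $\mathcal R=\{g_i\}_{i\ge 0}$ (which satisfies \eqref{recupoly} by Lemma~\ref{mink2}, and whose tail agrees with $\{\phi(H_n)\}_{n\ge 10}$), reducing the divisibility question $\phi(P_m)\mid \phi(H_n)$ to a \emph{finite} computation. Indeed, Theorem~\ref{divi1}(ii) says that for $i\ge 2$ we have $\phi(P_m)\mid g_{m+1+i}$ if and only if $\phi(P_m)\mid g_i$; iterating, $\phi(P_m)\mid g_n$ is equivalent to $\phi(P_m)\mid g_r$, where $r$ is the residue of $n$ obtained by repeatedly subtracting $m+1$ until one lands in a fixed window of small indices (say $0\le r\le m+1$, or just among $g_0,\dots,g_{m+1}$). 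Since $\deg g_0=14$, for $m\ge 14$ no $g_r$ with $r$ in the relevant small-index window can be divisible by $\phi(P_m)$ except when $g_r=0$, which never happens; and for $m\ge 15$ one checks $\phi(P_m)\nmid g_i$ for all the finitely many base cases because of degree reasons together with the explicit list in \eqref{ast}. So the whole problem collapses to checking, for each $m\in\{1,2,\dots,14\}$, exactly which residue classes of $n\pmod{m+1}$ make $\phi(P_m)$ divide the corresponding base polynomial $g_r$.

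\textbf{Carrying out the base cases.} For each small $m$ I would compute $\gcd(\phi(P_m),g_i)$ for $i=0,1,\dots,m+1$ (or as many as needed to see the periodic pattern), using the factorizations one can read off from \eqref{ast}. The key arithmetic facts driving the answer: $\phi(P_1)=\lambda$, so $\phi(P_1)\mid g_i$ iff $g_i$ has zero constant term, i.e.\ iff $i$ is odd (visible directly in \eqref{ast} and consistent with the recursion); pulling back through the period-$2$ reduction gives $\phi(P_1)\mid g_n$ iff $n$ is odd, but the problem restricts to $n\ge 10$, so one must further intersect with $n\ge 10$, and a direct check of $g_{11}=\phi(H_{11})$ shows $\lambda\mid g_{11}$ while the statement claims $(1,11)$ is \emph{not} in the list --- so in fact the claimed set uses $k\ge 5$, i.e.\ $n=2k+1\ge 11$; I would double-check the boundary here against $g_9,g_{11},g_{13}$. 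Next, $\phi(P_2)=\lambda^2-1$: one verifies $\lambda^2-1\mid g_0$ and $\lambda^2-1\mid g_1$ (both have the form $(\lambda^2-1)\cdot(\text{something})$ — check the value at $\lambda=\pm1$), hence by the period-$3$ reduction $\phi(P_2)\mid g_n$ for \emph{all} $n$, giving the family $\{(2,n)\mid n\ge 10\}$. For $m=5$, $\phi(P_5)=\lambda^5-\cdots$; here one checks $\phi(P_5)\mid g_i$ exactly for $i$ in one residue class mod $6$, and identifies which $g_r$ among $g_0,\dots,g_5$ is divisible by $\phi(P_5)$ — the answer forces $n\equiv 5\pmod 6$, and again the boundary $n=11$ (i.e.\ $k=1$) must be confirmed by direct division of $g_{11}$. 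For the remaining $m\in\{3,4,6,7,8,\dots,14\}$ one shows no residue class works, by evaluating $\gcd$'s and ruling out divisibility via the small degrees of $g_7,g_8$ and the explicit coefficients.

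\textbf{Main obstacle.} The conceptual content is entirely in Theorem~\ref{divi1}(ii) and is already in hand; the real work, and the only place errors can creep in, is the \emph{bookkeeping of boundary indices}. The reduction $g_n\leadsto g_r$ via subtracting $m+1$ is only valid for the tail where the recursion holds uniformly, and the sequence $\mathcal R$ was rigged precisely so that \eqref{recupoly} holds for \emph{all} $i\ge 0$; but the graphs $H_n$ only exist for $n\ge 10$, so a divisibility $\phi(P_m)\mid g_r$ with $r<10$ does \emph{not} by itself certify a cospectral-type statement about an actual graph --- it only tells us about the polynomial. Thus when I translate back to the statement "$P_m$ divides $H_n$", I must insist $n\ge 10$ throughout, and carefully determine the smallest admissible $n$ in each arithmetic progression: this is exactly why the final answer reads $k\ge 5$ for $m=1$ (smallest odd $n\ge 11$) and $k\ge 1$ for $m=5$ (smallest $n\equiv5\pmod 6$ with $n\ge 11$, namely $n=11$), rather than the naive "all $n$ in the class". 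I would verify each of these three boundary cases ($(1,11)$, $(2,10)$, $(5,11)$) and the first excluded case for $m=1$ by an explicit polynomial division against the listed $g_{10}$ and $g_{11}$. The degree argument that kills $m\ge 15$ is routine: $\deg\phi(P_m)=m>14\ge\deg g_i$ for all base-window indices $i$ with $\deg g_i\le 14$, and no $g_i$ vanishes identically, so $\phi(P_m)\nmid g_i$; for $m\in\{9,\dots,14\}$ a short case check on the base window finishes it.
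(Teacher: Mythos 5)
Your overall strategy is exactly the paper's: use Lemma~\ref{mink}(ii) (equivalently Theorem~\ref{divi1}(ii)) on the rigged sequence $\mathcal R$ to reduce $\phi(P_m)\mid g_n$ to $\phi(P_m)\mid g_i$ with $i$ the residue of $n$ modulo $m+1$, and then settle the finitely many base cases by inspection of \eqref{ast}. The treatment of $m=1$ ($\lambda\nmid g_0$, $\lambda\mid g_1$), $m=2$ ($\pm1$ roots of $g_0,g_1,g_2$), and $m=5$ ($g_5=-\phi(P_5)(\lambda^4-1)$, giving $n\equiv 5\bmod 6$) matches the paper, and your boundary checks are consistent with $k\geqslant 5$ and $k\geqslant 1$ in the statement.

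There is, however, a concrete hole in your large-$m$ argument. You assert that for $m\geqslant 15$ one has $\deg\phi(P_m)=m>14\geqslant\deg g_i$ for all base-window indices, but the base window for modulus $m+1$ is $\{0,1,\dots,m\}$, and for $i\geqslant 10$ the polynomial $g_i=\phi(H_i)$ has degree $i$, which ranges up to $m$ itself. So the degree bound $14$ only disposes of $i\leqslant 9$ (where indeed $\deg g_i=14-i$ for $i\leqslant 4$ and $\deg g_i<10$ for $5\leqslant i\leqslant 9$). For $10\leqslant i<m$ the degree comparison still works, but for $i=m$ both $\phi(P_m)$ and $g_m=\phi(H_m)$ are monic of degree $m$, so divisibility would force $\phi(P_m)=\phi(H_m)$; ruling this out requires an extra input, and the paper invokes Lemma~\ref{pathHvD} (paths are DS) precisely here. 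Your proposal never uses that lemma, so this case is unaddressed. A second, smaller point: for $m\in\{10,\dots,14\}$ and $0\leqslant i\leqslant 4$ the degrees can actually match ($\deg g_i=14-i$), so "a short case check" must amount to comparing the coefficients of the two highest-degree monomials of $g_i$ against those of $\phi(P_{14-i})$ from Lemma~\ref{path} — the paper does exactly this — rather than a pure degree count. With these two repairs your argument coincides with the paper's proof.
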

\begin{proof}
For $m=1$, note that $\phi(P_1)=\lambda$ does not divide $g_0 \in \mathcal R$, whereas $\lambda \mid g_1 \in \mathcal R$. Therefore, by Lemma~\ref{mink}(ii), $\phi(P_1)$ divides
$g_n$ if and only if $n$ is odd. This implies that $P_1$ divides $H_n$ if and only if $n$($\geqslant 10$) is odd.

 Let now $m=2$. The numbers $1$ and $-1$ are roots of $g_0$, $g_1$ and $g_2$. Again from Lemma~\ref{mink}(ii) we infer that $\lambda^2-1$ is a factor of  all the $g_i$'s. This, in particular, means that $P_2$ divides $H_n$ for all $n \geqslant 10$.

From now on we suppose $m \geqslant 3$. Every integer $n \geqslant 3$ can be written in a unique way as $n=(m+1)k+i$ where $k$ is a suitable nonnegative integer and $0 \leqslant i \leqslant m$. Lemma~\ref{mink}(ii) says that $\phi(P_m)$ divides $g_n$ if and only if  $\phi(P_m)$ divides $g_i$.

After a direct inspection, it turns out that, for $ 3 \leqslant m \leqslant 9$ and $0 \leqslant i \leqslant 9$, $\phi(P_m)$  is a factor of $g_i$ only when $(m,i)=(5,5)$; if this is the case, $g_5=-\phi(P_5)(\lambda^4-1)£$. Lemma~\ref{mink}(ii) now ensures that in the range  $ 3 \leqslant m \leqslant 9$, the path $P_m$ divides  $H_n$ if and only if $m=5$ and $n\equiv 5 \bmod 6$.

The proof will end by showing that  $P_m$ with $m\geqslant 10$ never divides a graph in $\mathscr{H}$.  By  Lemma~\ref{mink}(ii) it suffices to show that, for $m\geqslant 10$, none of the $g_i$'s with $i\leq m$ has $\phi(P_m)$ among its factors.

In fact, by looking at the polynomial degrees, for $0 \leq i \leq 4$, only $\phi(P_{14-i})$ could possibly divide $g_i$, but this is not the case: in fact, we can use Lemma~\ref{pathHvD} and \eqref{ast} to compare the coefficients of the monomials of the second largest degree and conclude that $g_i(\lambda)$ is not proportional to $\phi(P_{14-i})$. For $5 \leq i \leq 9$ we have $\deg g_i<10$; hence, $\phi(P_m) \nmid g_i(\lambda)$.\smallskip

Finally, let $m \geqslant i \geqslant 10$. If $\phi(P_m)$ were a factor of $g_i=\phi(H_i)$, the degree of the former could not be larger than $i=\deg g_i$; then, $m$ should be necessarily equal to $i$, implying $\phi(P_m) = \phi(H_i)$ and contradicting Lemma~\ref{pathHvD} which says that every path is DS.
\end{proof}

\begin{lem}\label{T22m} For the graphs in $\mathscr{T}$  and $\mathscr{H}$ respectively defined in \eqref{tienne} and \eqref{accan}, the following inequalities hold.\smallskip

\noindent $\mathrm{(i)}$ $\rho(T_h) < \rho(T_k)$  for  $6\leqslant h <  k$.\smallskip

\noindent $\mathrm{(ii)}$  $\rho(H_s) > \rho (H_t)$ for $10 \leqslant s <t$.
\end{lem}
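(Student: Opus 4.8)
The plan is to derive both inequalities from the classical monotonicity of the index under vertex additions (Lemma~\ref{proper}) and under subdivision of internal edges (Lemma~\ref{internalpath}), and then to pass to the general case by transitivity.

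For part~(i), I would observe that $T_{h+1}=P_{2;h-1}^2$ is obtained from $T_h=P_{2;h-2}^2$ simply by appending one new pendant vertex at the end of its main path $P_{h-2}$ (in the notation of Fig.~\ref{Fig1}, the vertex labelled $h-3$ acquires a new neighbour, which becomes the vertex labelled $h-2$ of $T_{h+1}$). Hence $T_h\subset T_{h+1}$, and since $T_{h+1}$ is connected, Lemma~\ref{proper} gives $\rho(T_h)<\rho(T_{h+1})$ for every $h\geqslant 6$. Chaining these strict inequalities yields $\rho(T_h)<\rho(T_k)$ whenever $6\leqslant h<k$.

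For part~(ii), the point is that $H_{n+1}=P_{2,2;n-3}^{2,n-6}$ is obtained from $H_n=P_{2,2;n-4}^{2,n-7}$ by subdividing a single edge of the internal path joining the two vertices of degree~$3$ (the vertices labelled $2$ and $n-7$ in Fig.~\ref{Fig1}). Indeed, passing from $H_n$ to $H_{n+1}$ leaves untouched the two pendant paths of length~$2$, the initial segment from the vertex $0$ to the vertex $2$, and the terminal segment of length~$2$, while it lengthens by exactly one edge the segment of the main path joining the two branch vertices; that segment has $(n-7)-2=n-9\geqslant 1$ edges for every $n\geqslant 10$ (when $n=10$ it reduces to the single edge joining the vertices $2$ and $3$, which is itself an internal path because both of its endpoints have degree $3$), so the subdivided edge does lie on an internal path of $H_n$. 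Since every pendant path of $H_n$ has length $2$, the graph $H_n$ is never of the form $P_{1,1;m-2}^{1,m-4}$; hence Lemma~\ref{internalpath}(ii) applies and gives $\rho(H_{n+1})<\rho(H_n)$ for all $n\geqslant 10$. Transitivity then yields $\rho(H_s)>\rho(H_t)$ for $10\leqslant s<t$.

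Neither part presents a real obstacle: once the structural descriptions of $T_n$ and $H_n$ are made explicit, everything reduces to a single application of a standard lemma together with transitivity. The only point that calls for a moment's care is the boundary case $n=10$ in part~(ii), where the internal path to be subdivided is a single edge rather than a longer path; this is covered by the definition of internal path adopted in the paper, and one also has to check (immediately) that $H_n$ never coincides with the exceptional graph $P_{1,1;m-2}^{1,m-4}$ excluded from Lemma~\ref{internalpath}(ii).
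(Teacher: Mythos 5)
Your proof is correct and follows exactly the paper's own argument: part (i) from the subgraph monotonicity of Lemma~\ref{proper} since $T_h\subset T_k$, and part (ii) from Lemma~\ref{internalpath}(ii) since $H_t$ arises from $H_s$ by repeatedly subdividing the internal path. Your extra care about the case $n=10$ and the exceptional family $P_{1,1;m-2}^{1,m-4}$ is a welcome (if implicit in the paper) verification.
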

\begin{proof} Part (i) follows from Lemma \ref{proper} since $T_h \subset T_k$ whenever $h<k$. Part (ii) is a consequence of Lemma \ref{internalpath}(ii) since $H_t$ is obtained from $H_s$ by subdividing the internal path $t-s$ times.
\end{proof}

\begin{lem}\label{third}
For $n \geqslant 10$, $\lambda_3(H_n) < 2$.
\end{lem}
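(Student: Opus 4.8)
The goal is to show $\lambda_3(H_n) < 2$ for all $n \geqslant 10$. The natural strategy is to exhibit, for each $H_n$, two vertices whose deletion leaves a graph all of whose components have index strictly less than $2$, and then invoke interlacing (Lemma~\ref{interlace}). Concretely, recall that $H_n = P_{2,2;n-4}^{2,n-7}$ consists of a long ``spine'' path on the vertices $0,1,\dots,n-5$ with a pendant path of two edges attached at vertex $2$ and another attached at vertex $n-7$. The plan is to delete the two degree-$3$ vertices of $H_n$, namely the spine vertices labelled $2$ and $n-7$. After removing these two vertices, $H_n$ breaks into disjoint paths: a short path $P_2$ (the spine stub $0$--$1$), the two pendant $P_2$'s, a middle path on the spine vertices $3,\dots,n-8$, and the end stub; every component is a path, hence by Lemma~\ref{smith} (or directly Lemma~\ref{path}) each has index $< 2$, so $\rho(H_n - \{2,n-7\}) < 2$.

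Applying interlacing twice (Lemma~\ref{interlace}, once for each deleted vertex) gives
\[
\lambda_3(H_n) \leqslant \lambda_1\bigl(H_n - \{2,n-7\}\bigr) = \rho\bigl(H_n - \{2,n-7\}\bigr) < 2,
\]
which is exactly the claim. To be careful about the small values of $n$: one should check that the labels $2$ and $n-7$ are genuinely distinct and both are degree-$3$ vertices, which requires $n-7 > 2$, i.e.\ $n > 9$; since we assume $n \geqslant 10$ this is fine, though the cases $n = 10, 11$ should perhaps be glanced at by hand (for $n=10$ the ``middle'' spine path $3,\dots,n-8 = 3,\dots,2$ is empty, and one simply gets a disjoint union of a few short paths, still with index $<2$).

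I do not anticipate a genuine obstacle here: the only mild subtlety is making sure the component analysis is stated uniformly across all $n \geqslant 10$ (i.e.\ writing down precisely which paths appear and that none of the degenerate small cases produces a non-path or a path long enough to matter — but all paths have index $< 2$ regardless of length). An alternative, if one prefers not to track the pieces explicitly, is to observe that $H_n - \{2, n-7\}$ is a disjoint union of paths, that a disjoint union of graphs has index equal to the maximum of the indices, and that $\rho(P_k) = 2\cos\frac{\pi}{k+1} < 2$ for every $k$ by Lemma~\ref{path}; this avoids any case distinction entirely. Either way the proof is two lines once the deletion is specified.
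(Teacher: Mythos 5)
Your proof is correct and follows essentially the same route as the paper: the paper also deletes the two degree-$3$ vertices (first the vertex labelled $2$, then the branch vertex of the remaining $T_{n-5}$), applies interlacing twice, and lands on $4P_2 \cup P_{n-10}$, whose index is $<2$. The only cosmetic difference is that the paper treats $n=10$ by direct computation of $\lambda_3(H_{10})$ whereas your uniform component analysis covers it as a degenerate case.
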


\begin{proof}
A direct computation shows that $\lambda_3(H_{10})=1<2$. Let now $n>10$. Let $w'$ be the vertex of $H_n$ whose label is $2$ in  Fig.~\ref{Fig1} an let $w''$ the only vertex of degree $3$ in $T_{n-5}$. From Lemmas~\ref{path} and~\ref{interlace} it follows that
$$\lambda_3(H_n) \leqslant \lambda_2(H_n -
w') = \lambda_2(2P_2 \cup T_{n-5}) \leq \lambda_1(2P_2 \cup
(T_{n-5}-w'')) = \lambda_1(4P_2 \cup P_{n-10}) < 2,$$ as claimed.
\end{proof}

\begin{prop}\label{divi3}
For $n=2k+1>10$, the T-shape graph $T_h$ divides the H-shape graph $H_n$ if and only if $h=k\geqslant 6$.
\end{prop}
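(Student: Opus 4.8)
The plan is to treat the two implications separately; the ``if'' part is an immediate application of Schwenk's formula, and all the work is in the ``only if'' part, whose last step is the main obstacle.

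\emph{The ``if'' direction.} Suppose $h=k\geqslant 6$, so $n=2k+1\geqslant 13$. The backbone $P_{n-4}$ of $H_n$ has the odd number $2k-3$ of vertices, hence a central vertex $v$; since the two attachment points $2$ and $n-7=2k-6$ average to $k-2$, the vertex $v$ is exactly this midpoint, has degree $2$, and $v_i\mapsto v_{2k-4-i}$ is a reflective automorphism of $H_n$ fixing $v$. Deleting $v$ splits $H_n$ into two copies of $P^{2}_{2;k-2}=T_k$, and deleting $v$ together with either of its neighbours leaves $T_k\sqcup T_{k-1}$. Since $H_n$ is a tree, \eqref{schwenky1} applied at $v$ reads
\[
\phi(H_n)=\lambda\,\phi(T_k)^{2}-2\,\phi(T_k)\,\phi(T_{k-1})=\phi(T_k)\bigl(\lambda\,\phi(T_k)-2\,\phi(T_{k-1})\bigr),
\]
so $\phi(T_k)\mid\phi(H_n)$.

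\emph{The ``only if'' direction.} Assume $\phi(T_h)\mid\phi(H_n)$ with $h\geqslant 6$ and $n=2k+1>10$. Degrees force $h\leqslant n$. The cases $h=n$ and $h=n-1$ are eliminated by invariants: $h=n$ would make $H_n$ cospectral to $T_n$, while $h=n-1$ would, via the symmetry of $\mathrm{sp}(H_n)$ (Proposition~\ref{invariant}(ii)), force the extra factor to be $\lambda$, i.e.\ $H_n$ cospectral to $P_1\sqcup T_{n-1}$; each leads to an equality of $2$-matching numbers that fails for $n>10$ (the graphs involved have $2$ and $1$ vertices of degree $3$; use Lemma~\ref{Match}(ii)). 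Hence $h\leqslant n-2$, so $T_h$ embeds as a \emph{proper} subgraph of $H_n$ (its branch vertex onto the vertex $2$ of $P_{n-4}$, its short pendant path onto a pendant $P_2$ of $H_n$, its long arm along $P_{n-4}$), and Lemma~\ref{proper} gives $\rho(T_h)<\rho(H_n)=\lambda_1(H_n)$; since $\rho(T_h)\in\mathrm{sp}(H_n)$ this yields $\rho(T_h)\leqslant\lambda_2(H_n)$. Now $H_n-v=2T_k$ has $\rho(T_k)$ as a \emph{double} largest eigenvalue, so interlacing (Lemma~\ref{interlace}) squeezes $\lambda_2(H_n)=\rho(T_k)$. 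Thus $\rho(T_h)\leqslant\rho(T_k)$; as $\rho(T_5)=\sqrt3<\rho(T_6)$ and $j\mapsto\rho(T_j)$ is strictly increasing for $j\geqslant 6$ (Lemma~\ref{T22m}(i)), this contradicts $h\geqslant 6$ when $k=5$ and forces $h\leqslant k$ (hence $k\geqslant 6$) when $k\geqslant 6$.

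\emph{Excluding $6\leqslant h<k$.} If $h<k$ then $\rho(T_h)<\rho(T_k)=\lambda_2(H_n)$, hence $\rho(T_h)\leqslant\lambda_3(H_n)<2$ by Lemma~\ref{third}; since $\rho(T_7)=2$ (Lemma~\ref{smith1}) and $j\mapsto\rho(T_j)$ is increasing, only $h=6$ survives. The remaining — and, I expect, the hardest — task is to prove $\phi(T_6)\nmid\phi(H_n)$ for all odd $n>13$. One has $\phi(T_6)=(\lambda^{2}-1)(\lambda^{4}-4\lambda^{2}+1)$ with $\lambda^{4}-4\lambda^{2}+1$ irreducible over $\mathbb Q$; since $\lambda^{2}-1=\phi(P_2)$ always divides $\phi(H_n)$ (Proposition~\ref{divi2}) and $\gcd(\lambda^{2}-1,\lambda^{4}-4\lambda^{2}+1)=1$, this reduces to deciding whether $2\cos(\pi/12)$ is an eigenvalue of $H_n$, i.e.\ whether $\lambda^{4}-4\lambda^{2}+1$ divides one of the two factors of $\phi(H_n)=\phi(T_k)\bigl(\lambda\phi(T_k)-2\phi(T_{k-1})\bigr)$. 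The plan is to reduce the recursion $\phi(T_{m+2})=\lambda\phi(T_{m+1})-\phi(T_m)$ modulo $\lambda^{4}-4\lambda^{2}+1$: there the transfer matrix $\bigl(\begin{smallmatrix}\lambda&-1\\1&0\end{smallmatrix}\bigr)$ has eigenvalues $e^{\pm i\pi/12}$ and hence multiplicative order $24$, so the residues $\phi(T_m)\bmod(\lambda^{4}-4\lambda^{2}+1)$ are periodic; one then reads off from the period exactly which $k$ make $2\cos(\pi/12)$ a root of $\phi(T_k)$, or of $\lambda\phi(T_k)-2\phi(T_{k-1})$, and compares with the desired conclusion $k=6$. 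Carrying out this last, essentially number-theoretic, step — and checking that it is consistent with the stated range of $n$ — is the delicate core of the argument.
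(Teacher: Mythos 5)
Your ``if'' direction is correct and essentially equivalent to the paper's: you apply the vertex form of Schwenk's formula at the central vertex $v$ of the backbone and get $\phi(H_{2k+1})=\phi(T_k)\bigl(\lambda\phi(T_k)-2\phi(T_{k-1})\bigr)$, while the paper applies the edge form at the edge $(k-3,k-2)$ and gets $\phi(T_k)\bigl(\phi(T_{k+1})-\phi(T_{k-1})\bigr)$; these agree since $\phi(T_{k+1})=\lambda\phi(T_k)-\phi(T_{k-1})$. Your ``only if'' argument for $h\geqslant 7$ is also correct and is in fact tighter than the paper's: the observation that $H_n-v=2T_k$ has $\rho(T_k)$ as a doubled top eigenvalue, so that interlacing forces $\lambda_2(H_n)=\rho(T_k)$, cleanly yields $\rho(T_h)\leqslant\rho(T_k)$, hence $h\leqslant k$, and then $\rho(T_h)\leqslant\lambda_3(H_n)<2$ together with $\rho(T_7)=2$ rules out $7\leqslant h<k$. (The paper instead argues via the factorization that $\lambda_2(H_{2k+1})=\lambda_1(T_h)$, using the inequality $\lambda_1(T_h)\geqslant 2$ --- which already presupposes $h\geqslant 7$.)

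The step you defer, excluding $h=6$ when $k>6$, is a genuine gap --- and it cannot be filled, because the statement is false there. Carrying out exactly the computation you propose: modulo $p(\lambda)=\lambda^4-4\lambda^2+1$, the recursion $\phi(T_{m+2})=\lambda\phi(T_{m+1})-\phi(T_m)$ has characteristic roots which are primitive $24$th roots of unity in every real embedding of $\mathbb{Q}[\lambda]/(p)$, so the residues are \emph{anti}-periodic with period $12$, i.e.\ $\phi(T_{m+12})\equiv-\phi(T_m)\pmod{p}$. Since $\phi(T_6)=(\lambda^2-1)p(\lambda)$, this gives $p\mid\phi(T_{18})$; equivalently, $\phi(T_{18},\mu)$ is a multiple of $\sin(17\pi/12)-\sin(15\pi/12)-\sin(13\pi/12)=0$ at $\mu=2\cos(\pi/12)$, so $\mu\in\mathrm{sp}(T_{18})$. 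As $(\lambda^2-1)\mid\phi(T_{18})$ and $\gcd(\lambda^2-1,p)=1$, we get $\phi(T_6)\mid\phi(T_{18})\mid\phi(H_{37})$: thus $T_6$ divides $H_{37}$ although $6\neq k=18$ (and the same happens for every $n=24j+13$). So the ``only if'' claim holds only for $h\geqslant 7$ --- which is all your argument, and indeed all the paper's later application in Case 2.2.2.2, actually needs. Note that the paper's own proof has the same blind spot: its key inequality $\lambda_1(T_h)\geqslant 2$ fails precisely for $h=6$, where $\rho(T_6)=2\cos(\pi/12)<2$. You should therefore restate the proposition for $h\geqslant 7$ rather than try to complete the $h=6$ case.
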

\begin{proof} Let $k\geqslant 6$. Applying \eqref{schwenky2} to $H_{n}$ with respect to the edge $\tilde{e}=\tilde{u}\tilde{v}$ where $\tilde{u}$ (resp., $\tilde{v}$)  is the vertex labelled $k-3$ (resp., $k-2$) on the horizontal path of $H_n$ in Fig.~\ref{Fig2}, we obtain
\begin{equation}\label{DKfact} \phi(H_{2k+1})= \begin{cases} \phi(T_6) \,\left( \phi (T_7) - \phi(P_5) \right) \qquad  \;\;\; \text{for $k=6$};\\[.3em]
  \phi(T_k)\left( \phi (T_{k+1}) - \phi (T_{k-1})\right) \quad \text{for $k>6$}.\\
\end{cases}
\end{equation}
This proves the `if' part. Suppose now
\begin{equation}\label{mink3} \phi(H_{2k+1})= \phi(T_h) f(\lambda)
\end{equation} for a suitable polynomial $f(\lambda)$. As a consequence of \cite[Theorem 1]{wang1}, $f(\lambda)$ has a positive degree; therefore,  it makes sense to refer to its $i$-th largest root $\lambda_i(f)$ for $1 \leqslant i \leqslant \deg f(\lambda)$.
We explicitly note that, from the first part of the proof, a situation of type \eqref{mink3} actually occurs at least when $h=k$. From Lemma \ref{bro}, we see that
$\lambda_1(T_h) < {\mathfrak h} < \lambda_1(H_{2k+1})$; therefore,   $\lambda_1(H_{2k+1})=\lambda_1(f)$. Clearly, $\lambda_2(H_{2k+1}) \in \{ \lambda_2(f), \lambda_1(T_h) \}$. The case $\lambda_2(H_{2k+1}) =\lambda_2(f)$ cannot occur, otherwise $$\lambda_3(H_{2k+1}) = \max \left\{\lambda_1(T_h), \lambda_3(f) \right\} \geqslant \lambda_1(T_h) \geqslant
2,$$  which contradicts Lemma \ref{third}.
Thus, $\lambda_2(H_{2k+1}) =\lambda_1(T_h)$. The same argument, starting from \eqref{DKfact} instead of \eqref{mink3} allows us to conclude that $\lambda_2(H_{2k+1})$ is also equal  to $\lambda_1(T_k)$; hence,  $h$ is necessarily equal to $k$  by Lemma
\ref{T22m}.
\end{proof}

\begin{lem}\label{mink4}
For each $n\geqslant 10$, the polynomial $\phi(H_n,\lambda)$ evaluated at $2$ gives $\phi(H_n,2)=9(n-15)$.
\end{lem}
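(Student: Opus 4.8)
The plan is to pass the recursion \eqref{nah} through the evaluation map $\lambda \mapsto 2$. Set $a_n := \phi(H_n,2)$. By Proposition~\ref{p22n-4}, the identity $\phi(H_n)=\lambda\phi(H_{n-1})-\phi(H_{n-2})$ holds for all $n\geqslant 12$, so specializing at $\lambda=2$ gives the numerical recurrence
\[
a_n = 2a_{n-1} - a_{n-2} \qquad (n\geqslant 12).
\]
Its characteristic polynomial is $x^2-2x+1=(x-1)^2$, so every solution is an affine function of $n$; hence it suffices to pin down two consecutive initial values and check that the target expression $9(n-15)$ solves the recurrence.

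First I would compute the two anchors directly from the explicit polynomials $g_{10}=\phi(H_{10})$ and $g_{11}=\phi(H_{11})$ recorded in \eqref{ast}. Substituting $\lambda=2$ into $\lambda^{10}-9\lambda^8+26\lambda^6-30\lambda^4+13\lambda^2-1$ yields $a_{10}=-45$, and substituting into $\lambda^{11}-10\lambda^9+34\lambda^7-48\lambda^5+29\lambda^3-6\lambda$ yields $a_{11}=-36$; these agree with $9(10-15)$ and $9(11-15)$ respectively. Note that $H_{10}$ and $H_{11}$ are exactly the right base cases because \eqref{nah}, and therefore the recurrence for $a_n$, is valid only from $n=12$ onward.

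Finally I would close by a one-line induction: assuming $a_{n-1}=9(n-16)$ and $a_{n-2}=9(n-17)$ for some $n\geqslant 12$, the recurrence gives $a_n = 2\cdot 9(n-16) - 9(n-17) = 9(n-15)$. Equivalently, the sequence $b_n := a_n - 9(n-15)$ satisfies the same homogeneous recurrence with $b_{10}=b_{11}=0$, hence vanishes identically. There is no real obstacle in this argument; the only point requiring care is the (routine) arithmetic in evaluating the two base polynomials at $\lambda=2$, together with the bookkeeping that fixes $n=12$ as the first index for which the recursion applies.
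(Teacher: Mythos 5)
Your proof is correct and is essentially the same as the paper's: the authors also propose an inductive argument based on the recursion \eqref{nah} together with the base values $\phi(H_{10},2)=-45$ and $\phi(H_{11},2)=-36$ read off from \eqref{ast} (they mention an appeal to a lemma of Ghareghani et al.\ only as an alternative). Your arithmetic for the anchors and the verification that $9(n-15)$ solves $a_n=2a_{n-1}-a_{n-2}$ both check out.
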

\begin{proof} We can  either apply \cite[Lemma 4]{gha-omi-tay} or make use of an inductive argument based on  the evaluations $\phi(H_{10},2)=-45$ and $\phi(H_{11},2)=-36$ made available by\eqref{ast} and the identity \eqref{nah}.
\end{proof}

\begin{prop}\label{divi4}

For $n \geqslant 10$ and $m  \geqslant 3$, $C_m$ divides $
H_n$ if and only if $(m,n) \in \left\{ (3,15), (6,15) \right\}$.
\end{prop}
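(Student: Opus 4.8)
The plan is to mimic the strategy used in Proposition~\ref{divi3} but now with cycles playing the role of T-shape graphs. First I would settle the ``if'' direction: by \eqref{035} we already know that $\phi(H_{15}) = \phi(P_{1,2,6}^{1,3} \cup C_6)$, so $\phi(C_6) \mid \phi(H_{15})$; moreover $\phi(C_6,\lambda) = \lambda^6 - 6\lambda^4 + 9\lambda^2 - 4 = (\lambda^2-1)(\lambda^2-4)(\lambda^2-1)/\dots$ — more to the point, $\phi(C_3) = \lambda^3 - 3\lambda - 2 = (\lambda+1)^2(\lambda-2)$ divides $\phi(C_6)$ since $\phi(C_6) = (\lambda-2)(\lambda+1)^2(\lambda+2)(\lambda-1)^2/\dots$; in any case $C_3 \subset C_6$ at the polynomial level because $\phi(C_3) \mid \phi(C_6)$ (both have the eigenvalue $2$ with the Perron structure and $-1$ with multiplicity two), which gives $\phi(C_3) \mid \phi(H_{15})$ as well. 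So both $(3,15)$ and $(6,15)$ genuinely occur, and I would record the factorization $\phi(H_{15}) = \phi(C_6)\cdot q(\lambda)$ explicitly.

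For the ``only if'' direction, suppose $\phi(C_m) \mid \phi(H_n)$ with $m \geqslant 3$ and $n \geqslant 10$. The first observation is a spectral-radius obstruction in the style of Proposition~\ref{divi3}: $\rho(C_m) = 2$ for every $m$, while Lemma~\ref{third} tells us $\lambda_3(H_n) < 2$ and $\lambda_2(H_n) < \mathfrak h$ with $\lambda_1(H_n) > \mathfrak h > 2$ (from Lemmas~\ref{bro} and~\ref{woo} combined with the facts collected about $\mathscr H$). Since $\rho(C_m)=2$ is a root of $\phi(H_n)$ of multiplicity at least the multiplicity it has in $\phi(C_m)$, and since $2$ can be an eigenvalue of $H_n$ at all only when $\phi(H_n,2)=0$, Lemma~\ref{mink4} forces $9(n-15)=0$, i.e.\ $n=15$. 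This is the crucial pinch: it collapses the whole problem to the single graph $H_{15}$. The remaining task is purely finite — determine exactly which cycles divide $\phi(H_{15})$.

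So I would then compute $\phi(H_{15},\lambda)$ (degree $15$; obtainable from \eqref{ast}/\eqref{nah}, or read off $\phi(P_{1,2,6}^{1,3})\cdot\phi(C_6)$ via \eqref{035}) and factor it over $\mathbb Z[\lambda]$. Using that $\mathbb Z[\lambda]$ is a UFD and that $\phi(C_m)$ is a specific monic integer polynomial for each $m$ (Lemma~\ref{path}), divisibility $\phi(C_m)\mid\phi(H_{15})$ is checkable degree by degree: necessarily $m \leqslant 15$, and one simply tests each $m \in \{3,4,\dots,15\}$. A clean way to organize this without brute force: any eigenvalue of $C_m$ is of the form $2\cos(2\pi j/m)$, all lie in $[-2,2]$, and $2 \in \mathrm{sp}(C_m)$ always; so $\phi(C_m)\mid\phi(H_{15})$ requires $2\in\mathrm{sp}(H_{15})$ (consistent with $n=15$) and, more restrictively, requires the minimal polynomial over $\mathbb Q$ of $2\cos(2\pi/m)$ to divide $\phi(H_{15})$. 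Since $\phi(H_{15})$ has bounded degree, only finitely many cyclotomic-type factors are possible, and a short check (the eigenvalues of $H_{15}$ that happen to be of the form $2\cos(2\pi j/m)$) leaves exactly $m=3$ and $m=6$. I would present this as: list the roots of $\phi(H_{15})$ in $(-2,2)$ that are ``cycle eigenvalues,'' observe $\phi(H_{15})=(\lambda-2)(\lambda+2)(\lambda^2-1)^2\,r(\lambda)$ with $r$ having no further such roots except those accounting for $C_3,C_6$, and conclude.

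The main obstacle is really just the bookkeeping in the last paragraph: one must be sure no ``accidental'' cycle polynomial $\phi(C_m)$ for some larger $m\leqslant 15$ slips in as a factor, which means either a careful factorization of the degree-$15$ polynomial $\phi(H_{15})$ or a clean argument (via $\phi(H_{15},2)$-type evaluations at $2\cos(2\pi/m)$, or via counting the multiplicity of the eigenvalue $-1$ and $2$) ruling out every $m\notin\{3,6\}$. Everything before that — reducing to $n=15$ — is immediate from Lemma~\ref{mink4} and Lemma~\ref{third} together with the $\rho(C_m)=2$ observation, exactly parallel to the proof of Proposition~\ref{divi3}.
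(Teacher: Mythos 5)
Your proposal follows essentially the same route as the paper: Lemma~\ref{mink4} together with $2\in\mathrm{sp}(C_m)$ forces $n=15$, and the rest is a finite check of which $\phi(C_m)$ divide $\phi(H_{15})$, which the paper carries out by writing $\phi(H_{15})=\lambda(\lambda^2-4)(\lambda^2-1)^3h(\lambda)$ and locating the two positive roots of $h$ strictly between consecutive values of $2\cos(2\pi/m)$. Only minor caveat: the factorization you sketch, $(\lambda-2)(\lambda+2)(\lambda^2-1)^2r(\lambda)$, has the wrong multiplicities (it is $(\lambda^2-1)^3$ and there is a simple factor $\lambda$, which is what rules out $C_4$), so the ``bookkeeping'' step you defer does need the exact factorization.
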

\begin{proof} From Lemma~\ref{path} or, equivalently, from  Lemma \ref{smith1}, one sees that $(\lambda -2) \mid \phi(C_m)$ for every $m \geqslant 3$, whereas, as a consequence of Lemma~\ref{mink4}, $2 \in {\rm sp}(H_n)$ only if $n=15$. Thus, if $C_m$ divides $H_n$, then $m \leqslant n=15$. By a direct calculation,
$$\phi(H_{15}) =
\lambda(\lambda^2-4)(\lambda^2-1)^3h(\lambda), \qquad \text{where $h(\lambda):= \lambda^6-7\lambda^4+12\lambda^2-2$}.$$
Now, it is clear that $\phi(C_3)= (\lambda-2)(\lambda+1)^2$ and $\phi(C_6)= (\lambda^2-4)(\lambda^2-1)^2$  divide $\phi(H_{15})$. Moreover, $C_m \nmid H_{15}$ for $m \not \in \{3,6\}$, since $\phi(C_4)=\lambda^2(\lambda^2-4)$ and, for $m\not\in\{3,4,6\}$,
$\lambda_2(C_m)=2\cos(2 \pi /m)$ does not belong to ${\rm sp}(H_{15})$. In order to see this, note that
$$0=2 \cos \left( \frac{2\pi}{4} \right)  < \zeta_1 < 2 \cos \left( \frac{2\pi}{5} \right) \qquad  \text{and} \qquad  2 \cos \left( \frac{2\pi}{9} \right) < \zeta_2 < 2 \cos \left( \frac{2\pi}{10}\right),$$
where $\zeta_1 \approx 0.4317$ and $\zeta_2 \approx 1.5718$ are the two positive roots of $h(\lambda)$.
 \end{proof}

In view of the last results in this section concerning the divisibility of the $H_n$'s, we state here two propositions involving  $\rho_n := \rho(H_n)$ ($n \geqslant 10$).
\begin{prop}\label{index} The index $\rho_n$ of $H_n$ belongs to the interval
$\left({\mathfrak h},  \frac{3\sqrt{2}}{2} \right)$
 if and only if $n \geqslant 12$.
\end{prop}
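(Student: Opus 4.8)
The plan is to sandwich the index $\rho_n$ of $H_n$ between $\mathfrak h$ and $3\sqrt2/2$ by combining the monotonicity of the sequence $\{\rho_n\}$ (Lemma~\ref{T22m}(ii)) with the known classification results in Lemmas~\ref{bro} and~\ref{woo}, and then resolving the small cases $n=10,11$ by hand. First I would observe that $H_n$ is an H-shape tree for every $n\geqslant 10$, and that by Lemma~\ref{T22m}(ii) the numbers $\rho_{10}>\rho_{11}>\rho_{12}>\cdots$ form a strictly decreasing sequence converging (by Lemma~\ref{internalpath}(ii), as the internal path is lengthened) to the limit value $3\sqrt2/2$ from above; in particular $\rho_n>3\sqrt2/2$ can never hold \emph{strictly above} — wait, I must be careful about the direction. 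Since $H_t$ is obtained from $H_s$ by subdividing an internal path, $\rho(H_t)<\rho(H_s)$, so the sequence decreases, and its infimum is $3\sqrt2/2$ (the spectral radius of the "limit" infinite caterpillar). Hence $\rho_n>3\sqrt2/2$ for \emph{all} $n\geqslant 10$, which already gives the lower-middle part for free; the real content is deciding for which $n$ one has $\rho_n>\mathfrak h$ versus $\rho_n\leqslant\mathfrak h$, and separately that $\rho_n<3\sqrt2/2$.

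For the upper bound $\rho_n<3\sqrt2/2$: by Lemma~\ref{woo}, every connected graph with spectral radius exceeding $3\sqrt2/2$ must already fail to be an open quipu of the right shape, or more directly, since each $H_n$ is a proper subgraph of $H_{n-1}$'s "parent" — actually the clean argument is: $H_n$ is a subgraph of arbitrarily large members of $\mathscr H$? No. Instead, I would simply exhibit, for each fixed $n$, that $H_n$ is a proper subgraph of some graph whose spectral radius equals $3\sqrt2/2$ — concretely, the one-way-infinite caterpillar $P_\infty$ with two pendant edges, whose finite truncations $H_{n+k}$ satisfy $\rho(H_{n+k})\to 3\sqrt2/2$ and $\rho(H_{n+k})<\rho(H_n)$; hence $\rho(H_n)>3\sqrt2/2$, not $<$. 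So the \emph{correct} reading is that the upper bound in the statement should instead come from: $H_n$ with $n\geqslant 12$ lies in the list $\mathcal H$ of Lemma~\ref{bro} (graphs with index in $(2,\mathfrak h)$)? That contradicts $\rho_n>3\sqrt2/2>\mathfrak h$. So in fact the statement must mean: $\rho_n\in(\mathfrak h,3\sqrt2/2)$ iff $n\geqslant 12$, where the interval is the Woo--Neumaier window of Lemma~\ref{woo}, i.e. $(\mathfrak h,3\sqrt2/2]$ shifted; the point being that for $n\geqslant 12$, $\rho_n$ is strictly below $3\sqrt2/2$. Let me just take that as the claim and argue: for large $n$, $\rho_n\to$ some limit $L\leqslant 3\sqrt2/2$; since the sequence is strictly decreasing, $\rho_n>L$ for all $n$, so I need $L$ and then check $\rho_{12}<3\sqrt2/2$ directly, whence $\rho_n<3\sqrt2/2$ for all $n\geqslant 12$ by monotonicity.

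So the concrete steps I would carry out are: (1) From Lemma~\ref{bro}, identify which $H_n$ lie in $\mathcal H$ — reading off the parametrization $P_{1,1;a+b+c+1}^{a,a+b}$ with $a=2$, $b^{*}(2,c)=c+3$, together with the sporadic graph $P_{1,1;11}^{2,7}=H_{11}$; conclude that $H_{10}$ and $H_{11}$ have index $<\mathfrak h$ (indeed $H_{11}=P_{1,1;11}^{2,7}$ is on the sporadic list, and $H_{10}=P_{1,1;10}^{2,6}$ should have index $\leqslant 2$ or in $(2,\mathfrak h)$ — a direct evaluation $\phi(H_{10},\mathfrak h)$ settles it). (2) For $n\geqslant 12$: use Lemma~\ref{internalpath}(i) or Lemma~\ref{proper} to see $\rho_n>\rho(\text{some graph known to have index }\geqslant\mathfrak h)$; the cleanest is to show $\rho_{12}>\mathfrak h$ by a direct sign check of the characteristic polynomial $\phi(H_{12})$ at $\mathfrak h$ (using $\mathfrak h^2=2+\sqrt5$), and then invoke monotonicity \emph{the wrong way} — since $\rho_n$ decreases, $\rho_{12}>\mathfrak h$ does \emph{not} give $\rho_n>\mathfrak h$ for $n>12$. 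Therefore I must instead argue: the limit $L=\lim\rho_n$ satisfies $L\geqslant\mathfrak h$ because $H_n$ contains as a subgraph a fixed graph $G_0$ with $\rho(G_0)>\mathfrak h$ for every $n$ large — e.g. by Lemma~\ref{woo}, an infinite open quipu with two symmetric pendant $P_2$'s has spectral radius $3\sqrt2/2>\mathfrak h$, and its finite subgraphs $H_n$ approximate it from below, so $\rho_n\uparrow 3\sqrt2/2$ — but this contradicts decreasing. The resolution, and the step I expect to be the main obstacle, is getting the monotonicity direction and the limiting value exactly right: I would pin down $\lim_{n}\rho(H_n)$ via the transfer-matrix / Schwenk-recursion $\phi(H_n)=\lambda\phi(H_{n-1})-\phi(H_{n-2})$ of Proposition~\ref{p22n-4}, whose characteristic equation $t^2-\lambda t+1=0$ forces the limit index to be the largest $\lambda$ for which the "boundary" factor vanishes appropriately, and a short computation identifies this limit as $\mathfrak h$ itself — so $\rho_n>\mathfrak h$ for all $n$ where the sequence is decreasing toward $\mathfrak h$, giving the lower bound $\rho_n>\mathfrak h$ automatically, and the upper bound $\rho_n<3\sqrt2/2$ needs only the single verification $\rho_{12}<3\sqrt2/2$ plus monotonicity, while $H_{10},H_{11}$ are excluded because their indices are $\leqslant\mathfrak h$ as read off from Lemma~\ref{bro}. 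Thus the final write-up will be: for $n\in\{10,11\}$ cite Lemma~\ref{bro} (or compute $\phi(H_n,\mathfrak h)\leqslant 0$); for $n\geqslant 12$, show $\phi(H_{12},3\sqrt2/2)$ has the sign forcing $\rho_{12}<3\sqrt2/2$, apply Lemma~\ref{T22m}(ii) to get $\rho_n\leqslant\rho_{12}<3\sqrt2/2$ for all $n\geqslant 12$, and show $\rho_n>\mathfrak h$ either from the recursion's limiting value or by a direct sign check $\phi(H_n,\mathfrak h)<0$ using the explicit polynomials in \eqref{ast} for small $n$ and the recursion for the rest.
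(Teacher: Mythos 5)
Your write-up contains a fatal misreading of which endpoint of the interval excludes $H_{10}$ and $H_{11}$. You propose to read off from Lemma~\ref{bro} that $H_{11}=P_{1,1;11}^{2,7}$ is on the sporadic list in $\mathcal H$ and hence that $\rho_{10},\rho_{11}\leqslant\mathfrak h$. But $H_{11}=P_{2,2;7}^{2,4}$ has two pendant paths of length $2$, whereas every member of $\mathcal H$ in Lemma~\ref{bro} is of type $P_{1,1;\cdot}^{\cdot,\cdot}$, i.e.\ has pendant \emph{edges}; no $H_n$ belongs to $\mathcal H$ (nor to $\mathscr{G}_{<2}\cup\mathscr{G}_2$), and this is precisely how the paper proves $\rho_n>\mathfrak h$ for \emph{every} $n\geqslant 10$, combined with the observation that $\mathfrak h$ cannot be a graph eigenvalue. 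Numerically $\rho_{10}\approx 2.170$ and $\rho_{11}\approx 2.136$, both well above $3\sqrt2/2\approx 2.121>\mathfrak h\approx 2.058$; your own suggested check "$\phi(H_n,\mathfrak h)\leqslant 0$" would in fact prove the opposite of what you want, since a monic polynomial that is negative at $\mathfrak h$ has a root larger than $\mathfrak h$. The true reason $n=10,11$ are excluded is that $\rho_{10}$ and $\rho_{11}=\sqrt{10+2\sqrt{17}}/2$ exceed the \emph{upper} endpoint $3\sqrt2/2$, which the paper verifies by direct computation.

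A second unresolved error is your opening claim that $\inf_n\rho_n=3\sqrt2/2$ and hence $\rho_n>3\sqrt2/2$ for all $n\geqslant 10$; this would make the proposition false for every $n$ and contradicts the computed value $\rho_{12}\approx 2.115$. The limit of the decreasing sequence $\{\rho_n\}$ is $\mathfrak h$, not $3\sqrt2/2$. You then gesture at recovering the lower bound $\rho_n>\mathfrak h$ from the recursion $\phi(H_{n+2})=\lambda\phi(H_{n+1})-\phi(H_n)$ by "identifying the limit as $\mathfrak h$," but that computation is left as an assertion, so the lower bound is not actually established in your argument either. The only step that survives and matches the paper is the upper bound for $n\geqslant 12$: check $\rho_{12}<3\sqrt2/2$ directly and invoke the monotonicity of Lemma~\ref{T22m}(ii) to get $\rho_n\leqslant\rho_{12}<3\sqrt2/2$ for all $n\geqslant 12$.
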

\begin{proof} The inequality $\rho_n >{\mathfrak h}$ holds for every $n \geqslant 10$ because of the following two facts:\\
$\bullet$ by Lemmas~\ref{smith}-\ref{bro}, $H_n \not\in {\mathscr G}_{<2} \cup {\mathscr G}_{2} \cup {\mathscr G}_{(2,{\mathfrak h})};$\\
$\bullet$ the number $\mathfrak h$ is not a graph eigenvalue, since its minimal polynomial has nonreal roots.

By a direct computation,
$$
\rho_{10} > 2.17008 >
\rho_{11}=\frac{\sqrt{10+2\sqrt{17}}}{2} >\frac{3}{\sqrt{2}} \quad \text{and} \quad
\rho_{12}< 2.115 < \frac{3}{\sqrt{2}}.$$
Now, by Lemma \ref{T22m}(ii) we have
$
\rho_n<
\rho_{12}< 3/{\sqrt{2}}$ for all $n > 12$.
\end{proof}

\begin{prop}\label{C122S}
Let $n \geqslant 12$. If $\rho(C_{1,2; 2s}^{0,s}) \leqslant
\rho_n$, then $s \geqslant 7$.

\end{prop}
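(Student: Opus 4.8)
The plan is to bound $\rho(C_{1,2;2s}^{0,s})$ from below by exhibiting a convenient subgraph or, better, by using the subdivision/monotonicity machinery already set up, and to bound $\rho_n = \rho(H_n)$ from above using Proposition~\ref{index}, which tells us that $\rho_n < 3/\sqrt2$ for every $n\geqslant 12$. So it suffices to show that $\rho(C_{1,2;2s}^{0,s}) > 3/\sqrt2$ as soon as $s \leqslant 6$, i.e.\ to check the six graphs $C_{1,2;2s}^{0,s}$ for $s=1,\dots,6$ and verify that each has index at least $3/\sqrt2$. Note that $C_{1,2;2s}^{0,s}$ is the closed quipu obtained from an even cycle $C_{2s}$ by attaching a pendant edge at one vertex and a pendant path of length $2$ at the diametrically opposite vertex; its order is $2s+3$, so for $s=1,\dots,6$ these are graphs on $5,7,9,11,13,15$ vertices.

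First I would handle the six small cases directly. For each $s \in \{1,2,3,4,5,6\}$ one computes $\phi(C_{1,2;2s}^{0,s})$ — for instance via the Schwenk formula \eqref{schwenky2} applied to an edge of the cycle, which reduces the closed quipu to an open quipu plus a cycle term — and evaluates it at $\lambda = 3/\sqrt2$. Since $3/\sqrt2$ exceeds the index of every graph in $\mathscr G_{<2}\cup\mathscr G_2\cup\mathscr G_{(2,\mathfrak h)}$ and these closed quipus lie outside those families (they contain a cycle and have the requisite degree pattern), it is enough to certify $\phi(C_{1,2;2s}^{0,s},\,3/\sqrt2) \leqslant 0$, which together with $\phi \to +\infty$ forces a root $\geqslant 3/\sqrt2$; equivalently one may just quote numerical values of $\rho(C_{1,2;2s}^{0,s})$ and check each is $> 3/\sqrt2 \approx 2.1213$. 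A quicker route for most of these: $C_{1,2;2s}^{0,s}$ properly contains the lollipop $C_{1;2s}^{0}$ (delete the pendant path of length $2$ and... more care is needed) — but cleanest is Lemma~\ref{internalpath}: the graph $C_{1,2;2s}^{0,s}$ is obtained from the smaller member $C_{1,2;2(s-1)}^{0,s-1}$ by subdividing an edge lying on an internal path of the cycle, and since these graphs are not in $\{P_{1,1;n-2}^{1,n-4}\}$, subdivision strictly decreases the index; hence $\rho(C_{1,2;2s}^{0,s})$ is strictly decreasing in $s$, and it suffices to show $\rho(C_{1,2;12}^{0,6}) > 3/\sqrt2$, a single computation on a $15$-vertex graph.

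Assembling: combining the monotonicity in $s$ just described with the single inequality $\rho(C_{1,2;12}^{0,6}) > 3/\sqrt2$ (verified by a direct computation of the characteristic polynomial at $3/\sqrt2$), we get $\rho(C_{1,2;2s}^{0,s}) > 3/\sqrt2$ for all $s \leqslant 6$. On the other hand, Proposition~\ref{index} gives $\rho_n < 3/\sqrt2$ for all $n \geqslant 12$. Therefore $\rho(C_{1,2;2s}^{0,s}) \leqslant \rho_n$ with $n \geqslant 12$ is impossible when $s \leqslant 6$, which is exactly the contrapositive of the claim. I expect the only real obstacle to be bookkeeping: confirming that $C_{1,2;2s}^{0,s}$ genuinely is not in $\{P_{1,1;n-2}^{1,n-4} \mid n\geqslant 6\}$ (so Lemma~\ref{internalpath}(ii) applies) and correctly identifying an edge of the cycle as belonging to an internal path — both are routine once one draws the picture, but they must be stated to make the subdivision argument valid. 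If one prefers to avoid the monotonicity step entirely, the fallback is simply six explicit evaluations $\phi(C_{1,2;2s}^{0,s},\,3/\sqrt2)$ for $s=1,\dots,6$, each a finite and elementary calculation.
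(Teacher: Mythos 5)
Your proposal is correct and follows essentially the same route as the paper: monotonicity of $\rho(C_{1,2;2s}^{0,s})$ in $s$ via Lemma~\ref{internalpath}(ii), reducing everything to the single check $\rho(C_{1,2;12}^{0,6}) \approx 2.1215 > 3/\sqrt{2} > \rho_n$, the last inequality being Proposition~\ref{index}. The only correction needed is that passing from $C_{1,2;2s}^{0,s}$ to $C_{1,2;2(s+1)}^{0,s+1}$ requires subdividing \emph{two} edges of the cycle (incident to the same vertex of degree $3$, so that the cycle stays even and the two attachment points stay antipodal), not one; this does not affect the conclusion since the lemma applies to each subdivision.
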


\begin{proof}
For each $s\geqslant 2$, the graph $C_{1,2; 2(s+1)}^{0,s+1}$ is obtained from  $C_{1,2; 2s}^{0,s}$ by subdividing two edges on the cycle incident to the same vertex of degree $3$;
therefore, $\rho(C_{1,2; 2s}^{0,s})>\rho(C_{1,2; 2(s+1)}^{0,s+1})$ by Lemma \ref{internalpath}(ii). Thus,
$$\rho(C_{1,2;4}^{0,2}) > \rho(C_{1,2;6}^{0,3}) > \rho(C_{1,2;8}^{0,4}) > \rho(C_{1,2;10}^{0,5}) >
\rho(C_{1,2;12}^{0,6}) \approx 2.1215 > 3/\sqrt{2} >
\rho_n,$$ where the last inequality comes from Proposition~\ref{index}.
\end{proof}

\begin{lem}\label{C0k}{\rm \cite[Theorem 4.1]{wjf}} $\rho(C_{s,t;g}^{0,k}) < \rho(C_{s,t;g}^{0,k-1})$ whenever
$\displaystyle 2 \leqslant k \leqslant \left\lfloor \frac{g}{2} \right\rfloor$ and $(s,t) \in \mathbb N \times \mathbb N$.
\end{lem}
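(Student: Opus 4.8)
The plan is to prove the equivalent strict inequality $\rho(C_{s,t;g}^{0,k}) < \rho(C_{s,t;g}^{0,k-1})$ by locating a root: setting $G := C_{s,t;g}^{0,k-1}$ and $G' := C_{s,t;g}^{0,k}$ and $\rho' := \rho(G')$, I will show $\phi(G,\rho') < 0$. Since $\phi(G,\lambda)$ is monic with largest real root $\rho(G)$, a negative value at $\rho'$ forces $\rho' < \rho(G)$, which is exactly the assertion. (Here I assume $s,t\geqslant 1$; for $s=0$ or $t=0$ the two graphs coincide up to isomorphism and there is nothing to prove.)

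First I would introduce the base graph $B := C_{s;g}^{0}$, the $g$‑cycle with a pendant $s$‑path at vertex $0$, so that both $G$ and $G'$ arise from $B$ by coalescing one endpoint of $P_{t+1}$ with the cycle vertex $k-1$, resp. $k$. Applying the coalescence form of Schwenk's formula \eqref{schwenky1} together with the path recursion \eqref{path-poly} gives the identities
\[
\phi(G)=\phi(B)\phi(P_t)-\phi(B-(k-1))\phi(P_{t-1}),\qquad
\phi(G')=\phi(B)\phi(P_t)-\phi(B-k)\phi(P_{t-1}),
\]
so that $\phi(G)-\phi(G')=\bigl(\phi(B-k)-\phi(B-(k-1))\bigr)\phi(P_{t-1})$. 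Deleting the cycle vertex $j$ from $B$ turns the cycle into a path, hence $B-j$ is the tree consisting of three paths of edge–lengths $g-1-j$, $j-1$ and $s$ sharing a common endpoint (vertex $0$); write it $S(g-1-j,j-1,s)$. Thus $B-(k-1)=S(g-k,\,k-2,\,s)$ and $B-k=S(g-k-1,\,k-1,\,s)$, two such trees that differ by moving one vertex from the long leg to the short leg — here $k\leqslant\lfloor g/2\rfloor$ guarantees $g-k\geqslant k$.

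Next I would evaluate this ``rebalancing'' difference. Realizing $S(a,b,s)$ as the coalescence of $P_{a+b+1}$ (at its vertex at distance $a$ from an end) with $P_{s+1}$, the coalescence formula gives $\phi(S(a,b,s))=\phi(P_{a+b+1})\phi(P_s)+\phi(P_a)\phi(P_b)\bigl(\phi(P_{s+1})-\lambda\phi(P_s)\bigr)$; the first summand is unchanged under $(a,b)\mapsto(a-1,b+1)$, so
\[
\phi(S(a,b,s))-\phi(S(a-1,b+1,s))=\bigl(\phi(P_a)\phi(P_b)-\phi(P_{a-1})\phi(P_{b+1})\bigr)\bigl(\phi(P_{s+1})-\lambda\phi(P_s)\bigr).
\]
Now $\phi(P_{s+1})-\lambda\phi(P_s)=-\phi(P_{s-1})$ by \eqref{path-poly}, and the Chebyshev‑type identity $\phi(P_m)\phi(P_n)-\phi(P_{m-1})\phi(P_{n+1})=\phi(P_{n-m})$ — with the conventions $\phi(P_{-1})=0$ and $\phi(P_{-r})=-\phi(P_{r-2})$ for $r\geqslant 2$, which follows from \eqref{path-poly} by an easy induction — applied with $m=g-k$, $n=k-2$ gives $\phi(P_{g-k})\phi(P_{k-2})-\phi(P_{g-k-1})\phi(P_{k-1})=-\phi(P_{g-2k})$. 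Assembling all of this yields $\phi(B-(k-1))-\phi(B-k)=\phi(P_{g-2k})\phi(P_{s-1})$, hence the compact formula
\[
\phi(G)-\phi(G')=-\,\phi(P_{g-2k})\,\phi(P_{s-1})\,\phi(P_{t-1}).
\]

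Finally, I would evaluate this at $\lambda=\rho'$. Since $G'$ is connected and (containing a cycle) is not a path, each of $P_{g-2k}$, $P_{s-1}$, $P_{t-1}$ is a proper subgraph of $G'$ when it has at least two vertices, so Lemma~\ref{proper} gives $\rho'>\rho(P_{g-2k}),\rho(P_{s-1}),\rho(P_{t-1})$; consequently $\phi(P_{g-2k},\rho')$, $\phi(P_{s-1},\rho')$, $\phi(P_{t-1},\rho')$ are all strictly positive (and in the degenerate cases $k=2$, $s=1$, $t=1$ or $g=2k$ the relevant factor is $\phi(P_0)=1$ or $\phi(P_1,\rho')=\rho'>0$, so positivity is immediate). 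As $\phi(G',\rho')=0$, the displayed identity gives $\phi(G,\rho')=-\phi(P_{g-2k},\rho')\phi(P_{s-1},\rho')\phi(P_{t-1},\rho')<0$, and therefore $\rho(C_{s,t;g}^{0,k})=\rho'<\rho(G)=\rho(C_{s,t;g}^{0,k-1})$. I expect the only real obstacle to be bookkeeping — correctly reading off the leg lengths of $B-j$, checking the degenerate boundary values $k=2$, $s=1$, $t=1$, $g=2k$, and pinning down the sign convention for $\phi(P_n)$ at negative indices in the Chebyshev identity; the conceptual skeleton (reduce a cycle‑rotation to a two‑leg rebalancing of a spider by two coalescence identities, then use a root‑location argument) is otherwise routine.
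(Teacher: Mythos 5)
Your argument is correct, and it is worth pointing out that the paper itself offers no proof of this lemma: it is imported verbatim from \cite[Theorem 4.1]{wjf}, so there is no in-text argument to compare against. Your derivation is a clean, self-contained proof in the style standard for such ``which attachment point minimizes the spectral radius'' statements: reduce everything to the difference of characteristic polynomials and locate a root. The two coalescence identities are right, the three-legged spider $B-j=S(g-1-j,\,j-1,\,s)$ is read off correctly, and the Chebyshev-type identity $\phi(P_m)\phi(P_n)-\phi(P_{m-1})\phi(P_{n+1})=\phi(P_{n-m})$ with the conventions $\phi(P_{-1})=0$, $\phi(P_{-r})=-\phi(P_{r-2})$ is a valid extension of Lemma~\ref{P1-P2} (it can be checked via $\phi(P_n)(2\cos\theta)=\sin((n+1)\theta)/\sin\theta$). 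The resulting formula
\[
\phi\bigl(C_{s,t;g}^{0,k-1}\bigr)-\phi\bigl(C_{s,t;g}^{0,k}\bigr)=-\,\phi(P_{g-2k})\,\phi(P_{s-1})\,\phi(P_{t-1})
\]
checks out on small cases (e.g.\ $g=4$, $k=2$, $s=t=1$ gives $-1$), and since $\rho(C_{s,t;g}^{0,k})>2$ exceeds the spectral radius of every path, each factor on the right is strictly positive there, so the sign argument closes the proof. The only caveats are the ones you already flag: strictness genuinely requires $s,t\geqslant 1$ (otherwise the two graphs are isomorphic by rotating the cycle), and the hypothesis $k\leqslant\lfloor g/2\rfloor$ is exactly what guarantees $g-2k\geqslant 0$ so that $\phi(P_{g-2k})$ is the characteristic polynomial of an honest (possibly null) path. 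In short: a correct and arguably more explicit proof than a bare citation; it could serve as a replacement for the reference to \cite{wjf} if self-containment were desired.
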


\begin{prop}\label{divi5} Let $n \geqslant 12$, No triples $(t,s,k)$ exist with
$t > 1$, $s > 3$ and $ 1< k \leqslant s$ such that
$\phi(C_{1,t,2s}^{0,k})= (\lambda^2-1)^{-1} \phi(H_n)$.
\end{prop}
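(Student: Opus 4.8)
The plan is to reason about degrees and about the largest eigenvalues, exploiting the results already accumulated in Section 3. Suppose, for contradiction, that a triple $(t,s,k)$ exists with $t>1$, $s>3$, $1<k\leqslant s$ and
$\phi(C_{1,t,2s}^{0,k})=(\lambda^2-1)^{-1}\phi(H_n)$, i.e.\ $(\lambda^2-1)\,\phi(C_{1,t,2s}^{0,k})=\phi(H_n)$. First I would count vertices: the left-hand graph has $1+t+2s$ vertices, so $3+t+2s=n$, which fixes $t=n-2s-3$; since $t>1$ and $n\geqslant 12$ this already constrains $s$. Next, since $P_2$ has characteristic polynomial $\lambda^2-1$, the hypothesis says precisely that $H_n$ is cospectral with the disjoint union $P_2\sqcup C_{1,t,2s}^{0,k}$; by Proposition~\ref{invariant} the number of edges must also match, which it does automatically once the vertex counts agree (both sides have $n-1+1=n$ edges counting the extra $P_2$), so no new information comes from edges alone.

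The heart of the argument is spectral. By Proposition~\ref{index}, for $n\geqslant 12$ the index $\rho_n=\rho(H_n)$ lies in $\left({\mathfrak h},\,\tfrac{3\sqrt2}{2}\right)$. On the other side, $\rho\bigl(P_2\sqcup C_{1,t,2s}^{0,k}\bigr)=\max\{\rho(P_2),\rho(C_{1,t,2s}^{0,k})\}=\rho(C_{1,t,2s}^{0,k})$ since $\rho(P_2)=1<{\mathfrak h}$. Thus a necessary condition is $\rho(C_{1,t,2s}^{0,k})=\rho_n\in\left({\mathfrak h},\,\tfrac{3\sqrt2}{2}\right)$. Now I would invoke Lemma~\ref{C0k}: since $1<k\leqslant s=\lfloor 2s/2\rfloor$, we have $\rho(C_{1,t,2s}^{0,k})\leqslant \rho(C_{1,t,2s}^{0,2})$, and in fact it suffices to bound the case $k=2$. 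For $k=2$, the graph $C_{1,t,2s}^{0,2}$ contains (after possibly relabelling) a subgraph of the form $C_{1,2;2s}^{0,s}$ or is otherwise comparable to it via edge subdivisions along internal paths; using Lemma~\ref{internalpath}(ii) as in the proof of Proposition~\ref{C122S}, one shows $\rho(C_{1,t,2s}^{0,2})\leqslant \rho(C_{1,2;2s'}^{0,s'})$ for an appropriate $s'\geqslant 2$, and then Proposition~\ref{C122S} forces $s'\geqslant 7$, which combined with $t=n-2s-3>1$ and $\rho_n<3/\sqrt2$ yields a contradiction — more precisely, Proposition~\ref{C122S} already tells us that $\rho(C_{1,2;2s}^{0,s})\leqslant\rho_n$ implies $s\geqslant 7$, so I need to push all the relevant $C_{1,t,2s}^{0,k}$'s below the threshold $3/\sqrt2$ except in a finite, explicitly checkable range.

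The cleanest route, and the one I would pursue, is: (a) use $t=n-2s-3$ to rewrite everything in terms of $n$ and $s$; (b) observe that $C_{1,t,2s}^{0,k}$ is obtained from $C_{1,2;2s}^{0,s}$ (or a close relative with index $<3/\sqrt2$) by subdividing internal-path edges, hence by Lemma~\ref{internalpath}(ii) its index is $\leqslant\rho(C_{1,2;2s}^{0,s})$, which by Proposition~\ref{C122S} is $<3/\sqrt2\leqslant\rho_n$ as soon as $s\geqslant 6$; (c) for the finitely many remaining small values of $s$ (namely $s\in\{4,5,6\}$, with correspondingly few $k$ and, once $n$ is also small, few $t$), compare the two characteristic polynomials directly — e.g.\ match the coefficient of $\lambda^{n-2}$, which by Lemma~\ref{path}-type formulas records the edge count, and the coefficient of $\lambda^{n-4}$, which records $M_2$, using Lemma~\ref{Match}: the H-shape tree side has $M_2(H_n)=\tfrac{n^2-5n}{2}+3-2$ while the closed-quipu side has $M_2(C_{1,t,2s}^{0,k})=\tfrac{(n-1)^2-3(n-1)}{2}-2$ plus the contribution of the appended $P_2$, and these cannot be reconciled. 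Alternatively, the lowest-term invariant $\vartheta$ from Proposition~\ref{DK-low} can be played against the $\vartheta$ of the unicyclic side.

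The main obstacle I anticipate is step (c): handling the low-$s$ exceptional cases cleanly. Here $\rho(C_{1,t,2s}^{0,k})$ need not be below $3/\sqrt2$, so the soft index argument fails and one must actually compare polynomials (or sufficiently many spectral moments / matching numbers). The bipartiteness and closed-walk invariants of Proposition~\ref{invariant}, together with the $M_2$ and $M_3$ formulas of Lemma~\ref{Match} and Proposition~\ref{ahah} (note $C_{1,t,2s}^{0,k}$ is bipartite with girth $2s\geqslant 8$, so it has no quadrangles or hexagons, and $H_n$ is a tree, so Proposition~\ref{ahah} applies), should suffice to rule out each remaining case, but this is the part that requires care rather than a one-line appeal to an earlier lemma.
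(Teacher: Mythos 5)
Your setup (the vertex count, the identity $\rho(C_{1,t;2s}^{0,k})=\rho_n$, and the relevance of Lemma~\ref{C0k} and Proposition~\ref{C122S}) matches the paper's, but the core of your argument runs in the wrong direction and your fallback does not work. Lemma~\ref{C0k} says the index \emph{decreases} as $k$ grows, so the useful chain is a \emph{lower} bound,
$\rho_n=\rho(C_{1,t;2s}^{0,k})\geqslant\rho(C_{1,t;2s}^{0,s})\geqslant\rho(C_{1,2;2s}^{0,s})$
(the last step because $t\geqslant 2$ makes $C_{1,2;2s}^{0,s}$ a subgraph), and Proposition~\ref{C122S} then forces $s\geqslant 7$. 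Your step (b) instead tries to \emph{upper}-bound the index: but passing from pendant length $2$ to pendant length $t$ is a supergraph operation (the index goes up, by Lemma~\ref{proper}), not an internal-path subdivision; Proposition~\ref{C122S} provides no upper bound at all for large $s$; and the asserted inequality ``$<3/\sqrt2\leqslant\rho_n$'' contradicts Proposition~\ref{index}, which gives $\rho_n<3\sqrt2/2$. Your step (c) is vacuous: both sides are bipartite with girth at least $8$, so Proposition~\ref{ahah} forces $M_2$ and $M_3$ to agree under the cospectrality hypothesis --- indeed a correct computation with Lemma~\ref{Match} (the quipu has order $n-2$, not $n-1$) gives $M_2(P_2\cup C_{1,t;2s}^{0,k})=\tfrac{n^2-5n+2}{2}=M_2(H_n)$ identically --- and Proposition~\ref{lollipop-low} only covers lollipops, not closed quipus with two pendant paths, so the $\vartheta$-invariant is not available here either.

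The idea you are missing is the finishing move after $s\geqslant 7$ is secured. Girth $2s\geqslant 14$ means the spanning tree $P_{2;2s+1}^{s+1}\subset C_{1,2;2s}^{0,s}$ contains $P_{2;15}^{8}$, whose index $2.0904+$ is then strictly below $\rho_n$; since $\rho_{15}\approx 2.08397$ and $\rho_n$ decreases in $n$, this forces $n\in\{12,13,14\}$, whence $\nu_{CQ}=n-2\leqslant 12$, which is incompatible with a girth of at least $14$. Without some such quantitative clash between the girth bound and the order of $CQ$, your outline does not close.
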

\begin{proof} Assume by contradiction that there exists a closed quipu $CQ:=C_{1,t;2s}^{0,k}$ with $t>1$, $s>3$ and $1<k\leqslant s$  such that
$\phi(H_n) = \phi(P_2) \phi(CQ)$. Clearly,
$
\rho_n= \rho(CQ)$. By  Lemma~\ref{C0k} and interlacing we obtain
$$
\rho_n =  \rho(CQ) \geqslant \rho(C_{1,t;2s}^{0, s}) \geqslant \rho(C_{1,2;2s}^{0, s}),$$  implying from Proposition \ref{C122S} that $s \geqslant 7$ and consequently, the girth $2s$ of $CQ$ is at least $14$.  Thus, $P_{2;15}^8 \subseteq P_{2;2s+1}^{s+1}$ which, together with $P_{2;2s+1}^{s+1} \subset C_{1,2;2s}^{0, s}$ and the above inequality, yields
\begin{equation}\label{P1214}
\rho_n> \rho(P_{2;2s+1}^{s+1}) \geq \rho(P_{2;15}^8) = 2.0904+.
\end{equation}
Note that, for $n\geqslant 15$, $
\rho_n \leqslant
\rho_{15} \approx 2.08397$; therefore, \eqref{P1214}
is only possible for $n \in \{12,13,14 \}$. Consequently, $CQ$ should have at most  $12$ vertices, contradicting the lower bound detected above for its girth.
\end{proof}

\section{Eigenvalues location for  the tree $H_n$ and some related closed quipus}\label{S4}
In Section~\ref{S3} we already achieved some results related to the location of specific eigenvalues of $H_n$  (see Lemma~\ref{third} and Proposition~\ref{index}).
We list below the approximated values of $\rho_n:=\rho(H_n)$ up to $n=20$.
Coherently with Proposition~\ref{index}, for $n \geqslant 12$ the values in \eqref{kekkaz} belong to the interval $\left({\mathfrak h}, 3/\sqrt{2}\right)$.
\begin{align}
 \nonumber \rho_{10} \approx 2.17009  &&\nonumber  \nonumber \rho_{13} \approx 2.10100 &&\rho_{16} \approx 2.07852 && \rho_{19} \approx 2.06843 \\
\label{kekkaz}  \rho_{11}  \approx 2.13578 && \rho_{14} \approx 2.09118  &&  \rho_{17} \approx 2.07431 &&  \rho_{20} \approx 2.06637. \\
\nonumber  \rho_{12} \approx 2.11491 && \rho_{15} \approx  2.08397  && \rho_{18} \approx  2.07103
\end{align}
The following proposition concerns the location of $\lambda_2(H_n)$. Thereafter, we discuss the relation between the spectral radii of $H_n$ and  some closed quipus.

\begin{prop}\label{second}
Let $\lambda_2(H_n)$ be the second largest eigenvalue of $H_n$. Then,

$$\lambda_2(H_n) \;
\begin{cases}
< 2 &\mbox{for $ 10 \leqslant n < 15$}; \\
= 2 &\mbox{for  $n = 15$};\\
\in \left(2, {\mathfrak h}\right) &\mbox{for $n > 15$}.
\end{cases}
$$

\end{prop}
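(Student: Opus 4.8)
The plan is to locate $\lambda_2(H_n)$ by combining the divisibility factorization of $\phi(H_{15})$ already computed in Proposition~\ref{divi4}, the recursion \eqref{nah}, and the interlacing/subgraph monotonicity tools from Section~\ref{S2}. First I would settle the boundary case $n=15$: from the explicit factorization $\phi(H_{15}) = \lambda(\lambda^2-4)(\lambda^2-1)^3 h(\lambda)$ with $h(\lambda)= \lambda^6-7\lambda^4+12\lambda^2-2$, one reads off that $2$ is an eigenvalue of $H_{15}$; since the largest root $\zeta_1$-related root of $h$ satisfies $\zeta_2 \approx 1.5718 < 2$ and $\rho_{15}\approx 2.084 > 2$, the value $2$ is precisely the \emph{second} largest eigenvalue, giving $\lambda_2(H_{15})=2$.

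Next I would handle the range $10\leqslant n<15$. Here the natural approach is a deletion/interlacing argument analogous to the proof of Lemma~\ref{third}: deleting a suitable vertex (say the vertex $w'$ labelled $2$ in Fig.~\ref{Fig1}, or alternatively a vertex splitting $H_n$ into a path plus a $T$-shape piece) produces a forest whose largest eigenvalue is strictly less than $2$, so that $\lambda_2(H_n)\leqslant \lambda_1(H_n-w')<2$. One must check the arithmetic works for each of $n=10,11,12,13,14$; since these graphs are small and explicit, and since $H_n-w'$ for these $n$ is a disjoint union of short paths (an analogue of $2P_2\cup T_{n-5}$, then further reduced), the bound $<2$ should follow either directly from Lemma~\ref{path} or from Lemma~\ref{smith}. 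For $n=10,13$ one can also invoke the explicit polynomials $g_{10},g_{11}$ in \eqref{ast}, but a uniform interlacing argument is cleaner.

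For the range $n>15$, I would argue monotonicity in $n$ together with a uniform lower bound $2$ and a uniform upper bound $\mathfrak h$. Monotonicity: $H_{n+1}$ is obtained from $H_n$ by subdividing an edge of the internal path, and by Lemma~\ref{internalpath}(ii) this \emph{decreases} the index; one needs the analogous statement for $\lambda_2$. Since subdivision along an internal path is (up to sign conventions) governed by the same transfer-matrix mechanics, and since $H_n$ for $n>15$ contains $P_{2,2;n-4}^{2,n-7}$ with a long middle path, the eigenvalue $\lambda_2$ should be monotone. The lower bound $\lambda_2(H_n)>2$ for $n>15$ follows from the subgraph relation $H_{15}\subset H_n$ (via Lemma~\ref{interlace} applied repeatedly, or via Lemma~\ref{proper} for a subgraph containing the eigenvector support), since $\lambda_2(H_{15})=2$ and adding the extra path vertices cannot push $\lambda_2$ below $2$; more carefully, one exhibits inside $H_n$ (for $n>15$) two vertex-disjoint subgraphs each with index $\geqslant 2$ — for instance a copy of $P_{1,1;n-2}^{1,n-4}$-type piece or of $C_4$-free graphs from Lemma~\ref{smith1} with index exactly $2$ — which by interlacing forces $\lambda_2(H_n)\geqslant 2$, and strictness comes from Lemma~\ref{proper}. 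The upper bound $\lambda_2(H_n)<\mathfrak h$ follows because $\lambda_2(H_n)\leqslant \lambda_1(H_n-x)$ for an appropriate cut vertex $x$, and $H_n-x$ is a disjoint union whose components are T-shape or H-shape graphs (or paths) with index $<\mathfrak h$ by Lemma~\ref{bro}; one must verify the components lie in $\mathcal T\cup\mathcal H\cup\mathscr G_{<2}\cup\mathscr G_2$.

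The main obstacle is the strict inequality $\lambda_2(H_n)>2$ for $n>15$ together with $\lambda_2(H_n)<\mathfrak h$: proving $\lambda_2>2$ cleanly requires producing two vertex-disjoint index-$\geqslant 2$ subgraphs inside $H_n$ and then upgrading to strictness, which is delicate because the obvious candidates (the two pendant-$P_2$ ``ears'' of $H_n$) have index $<2$, so one instead needs subgraphs that absorb part of the long middle path; identifying the right pair and confirming with Lemma~\ref{smith}/\ref{smith1} is the crux. A convenient alternative for strictness is to combine $\lambda_2(H_{15})=2$ with a perturbation/continuity argument via the recursion \eqref{nah}: evaluating the relevant quotient polynomial at $2$ and checking a sign change (using $\phi(H_n,2)=9(n-15)$ from Lemma~\ref{mink4}, which changes sign exactly at $n=15$) shows that for $n>15$ the polynomial $\phi(H_n)$ has a root slightly above $2$ in the ``second'' position; one then pairs this with the upper bound $\mathfrak h$ from the deletion argument to pin $\lambda_2(H_n)\in(2,\mathfrak h)$. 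I expect the sign-change route via Lemma~\ref{mink4} to be the shortest path to the strict bounds.
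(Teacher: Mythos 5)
Your primary route for $10 \leqslant n < 15$ has a concrete gap: deleting a single vertex of $H_n$ does \emph{not} in general leave a forest of index strictly less than $2$. For $n=14$, whichever vertex of the internal path you remove, one of the resulting components contains (or equals) $T_7=P_{2;5}^2$, whose index is exactly $2$ by Lemma~\ref{smith1}; so interlacing only yields $\lambda_2(H_{14})\leqslant 2$, and you must separately rule out $2\in\mathrm{sp}(H_{14})$ --- which already forces you back to Lemma~\ref{mink4}. Similarly, the ``two vertex-disjoint index-$\geqslant 2$ subgraphs'' plan for $n>15$ only gives $\lambda_2\geqslant 2$ (and requires the union to be an induced subgraph for Cauchy interlacing to apply), so strictness again needs $\phi(H_n,2)\neq 0$. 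In other words, both halves of your main argument quietly depend on the very fact you relegate to an ``alternative.''

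That alternative is precisely the paper's proof, and it settles all three cases at once: since $\rho_n>{\mathfrak h}>2$ (Proposition~\ref{index}) and $\lambda_3(H_n)<2$ (Lemma~\ref{third}), the point $2$ lies in the open interval $(\lambda_3(H_n),\rho_n)$, where $\phi(H_n,\cdot)$ is negative on $(\lambda_2,\rho_n)$ and positive on $(\lambda_3,\lambda_2)$; hence the sign of $\phi(H_n,2)=9(n-15)$ immediately locates $\lambda_2$ below, at, or above $2$ according as $n<15$, $n=15$, $n>15$ --- no case-by-case deletion, no monotonicity of $\lambda_2$ under subdivision (which you assert but do not prove, and which is not needed). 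The only remaining piece is the upper bound $\lambda_2(H_n)<{\mathfrak h}$ for $n>15$, and there your deletion argument matches the paper exactly: removing a degree-$3$ vertex $w$ gives $\lambda_2(H_n)\leqslant\rho(2P_2\cup P_{2;n-7}^2)<{\mathfrak h}$ by interlacing and Lemma~\ref{bro}. So promote your last paragraph to the main argument and drop the rest.
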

\begin{proof} For convenience, let $\lambda_2 := \lambda_2(H_n)$. 
From standard calculus,
$$\phi (H_n, c) \begin{cases} \, >0  & \text{if $c \in (\lambda_3(H_n), \lambda_2)$},\\
  <0 & \text{if $c \in (\lambda_2, \rho_n)$}.
\end{cases}
$$
Since  $\rho_n > {\mathfrak h} >2$ (by Proposition~\ref{index}) and
$\lambda_3(H_n)<2$ (by Lemma~\ref{third}), it suffices Lemma~\ref{mink4} to see that $\lambda_2<2$ or $\lambda_2>2$ according if $n<15$ or $n>15$, and $\lambda_2=2$ for $n=15$.

The upper bound for $\lambda_2$ when $n>15$ is proved as follows: by using interlacing and
Lemma~\ref{bro},  $$\lambda_2(H_n) \leqslant \rho(H_n - w) = \rho(2P_2 \cup P_{2,n-7}^2) < {\mathfrak h},$$ where $w$ is a vertex of degree 3 in $H_n$.
\end{proof}
\begin{prop}\label{T8}
Among the trees of order $n$  of type ${\mathcal T}^{10}$ (see Fig.~\ref{Fig3}), the smallest spectral radius is only achieved by $H_n$.
\end{prop}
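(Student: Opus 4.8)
The plan is to parametrise the trees of type $\mathcal{T}_{10}$ and to reduce an arbitrary one to $H_n$ through a sequence of ``edge shifts'', each strictly decreasing the spectral radius. A tree $T$ of type $\mathcal{T}_{10}$ of order $n$ is described by five integers: the length $\ell\geqslant 2$ of the internal path joining its two vertices $u,v$ of degree $3$, and the lengths $p_1,p_2\geqslant 2$ of the pendant paths at $u$ and $q_1,q_2\geqslant 2$ of the pendant paths at $v$, subject to $p_1+p_2+q_1+q_2+\ell=n-1$. In this language $H_n$ is the tree with $p_1=p_2=q_1=q_2=2$ (whence $\ell=n-9$), and $T\cong H_n$ if and only if all four pendant lengths equal $2$. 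So if $T\not\cong H_n$, then after relabelling we may assume the pendant path of length $p_1$ at $u$ has $p_1\geqslant 3$. For $n\leqslant 11$ there is nothing to prove: a $\mathcal{T}_{10}$-tree has at least $11$ vertices, and for $n=11$ the only one is $H_{11}$; so I would run the argument for $n\geqslant 12$.

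The heart of the matter is the shift itself. Given such a $T$ with $p_1\geqslant 3$, let $T_0$ be obtained from $T$ by deleting the leaf at the far end of the pendant path of length $p_1$ at $u$; then $T_0$ is again a $\mathcal{T}_{10}$-tree (with $p_1$ lowered to $p_1-1\geqslant 2$) and $T_0\subset T$, so Lemma~\ref{proper} gives $\rho(T_0)<\rho(T)$. Next, let $T'$ be obtained from $T_0$ by subdividing a single edge of its internal path joining $u$ and $v$; then $T'$ is a $\mathcal{T}_{10}$-tree of order $n$ with parameters $(p_1-1,p_2,q_1,q_2,\ell+1)$. Since $u$ and $v$ have degree $3$ in $T_0$, the subdivided edge lies on an internal path of $T_0$; moreover $T_0$ is not one of the graphs $P_{1,1;m-2}^{1,m-4}$, because all its pendant paths have length $\geqslant 2$. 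Hence Lemma~\ref{internalpath}(ii) applies and yields $\rho(T')<\rho(T_0)$. Combining the two steps, $\rho(T')<\rho(T)$.

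To finish, I would iterate the construction: the nonnegative quantity $(p_1-2)+(p_2-2)+(q_1-2)+(q_2-2)$ drops by exactly $1$ at each shift and vanishes precisely when the tree is $H_n$ (once the four pendant lengths are $2$, the internal length $\ell=n-9$ is forced). Therefore, after finitely many shifts we arrive at $H_n$, having strictly decreased the spectral radius at every step, so $\rho(T)>\rho(H_n)$ for every $\mathcal{T}_{10}$-tree $T$ of order $n$ not isomorphic to $H_n$. This is exactly the assertion.

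I do not expect a genuine obstacle: the proof is a routine application of Lemmas~\ref{proper} and~\ref{internalpath}. The two points that need care are (a) that every intermediate graph $T_0$ avoids the exceptional family $\{P_{1,1;m-2}^{1,m-4}\}$ of Lemma~\ref{internalpath}(ii), which is automatic since membership in $\mathcal{T}_{10}$ forces every pendant path to have length at least $2$; and (b) that the reduction really terminates at $H_n$ rather than some other configuration, which holds for the reason just noted. If one prefers to avoid the deletion step, the inequality $\rho(T_0)<\rho(T)$ can instead be obtained from Lemma~\ref{internalpath}(i), since $T$ arises from $T_0$ by subdividing a pendant edge, which lies on no internal path.
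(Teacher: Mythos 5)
Your proof is correct and follows essentially the same route as the paper's: both reduce an arbitrary $\mathcal{T}_{10}$-tree to $H_n$ by shortening the pendant paths to length $2$ (Lemma~\ref{proper}) and transferring the removed edges to the internal path (Lemma~\ref{internalpath}(ii)), the only difference being that you perform the transfer one edge at a time with an explicit termination argument while the paper does it in a single bulk step. Your added check that the intermediate trees avoid the exceptional family $P_{1,1;m-2}^{1,m-4}$ is a detail the paper leaves implicit.
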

\begin{proof} Let $G$ be a graph with $n$ vertices in ${\mathcal T}_{10} \setminus \{H_n\}$: Since $G$ is a tree, then $\varepsilon_G= \varepsilon_{H_n}=n-1$. Recall that the length of each dotted line in Fig.~\ref{Fig3} is at least 2.

Let $G'$ be the graph obtained from $G$ by leaving exactly two edges on each of its four pendant paths, and let $G''$ be the the graph obtained from $G'$ by inserting the deleted edges in the internal path of $G'$. Clearly $G''=H_n$, and
$\rho(G) >\rho(G') >
\rho_n$. The former inequality comes from Lemma~\ref{proper}, the latter from
Lemma~\ref{internalpath}.
\end{proof}
  \begin{prop}\label{C1g0} Among all the closed quipus of order $\nu$, the lollipop $C_{1,\nu-1}^0$
has the smallest spectral radius. Moreover, ${\mathfrak h}  <
\rho(C_{1,\nu-1}^0) \leqslant \rho_{10}$, where $\rho(C_{1,\nu-1}^0) = \rho_{10}$ (resp., $\rho_{11}$) if and only if $\nu=4$ (resp., $\nu=5$).
\end{prop}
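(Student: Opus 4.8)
The plan is to prove the statement in two parts: first that among all closed quipus on $\nu$ vertices the lollipop $C^0_{1,\nu-1}$ (equivalently $C_{1;\nu-1}^0$, the cycle $C_{\nu-1}$ with a single pendant vertex) minimizes the spectral radius, and then to pin down the numerical bounds by comparing with the already-computed values $\rho_{10}$ and $\rho_{11}$. For the minimization part, recall that every closed quipu on $\nu$ vertices is a unicyclic graph of maximum degree $3$ whose degree-$3$ vertices lie on the unique cycle; its girth $g$ satisfies $3 \le g \le \nu$. Starting from an arbitrary closed quipu $Q$ on $\nu$ vertices, I would perform a sequence of graph transformations, each of which does not increase (and strictly decreases unless we are already at the lollipop) the spectral radius, until we reach $C^0_{1,\nu-1}$. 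Concretely: (a) if $Q$ has at least two pendant paths, or one pendant path of length $\geq 2$ together with extra cycle length, use Lemma~\ref{internalpath}(ii) (edge subdivision on an internal path decreases $\rho$, the excluded family $P_{1,1;n-2}^{1,n-4}$ being a tree, not a closed quipu) and Lemma~\ref{proper} (passing to a proper subgraph strictly decreases $\rho$) to ``slide'' edges off the cycle and consolidate all off-cycle edges into a single pendant path; (b) once $Q$ has exactly one pendant path attached at a single vertex, repeatedly apply Lemma~\ref{internalpath}(ii) to move edges from the cycle into that pendant path, shrinking the girth toward $3$; this is legitimate because the cycle edges not incident to the attachment vertex lie on an internal path. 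Carrying this out shows $\rho(Q) \ge \rho(C^0_{1,\nu-1})$ with equality only when $Q = C^0_{1,\nu-1}$.

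Next I would establish the monotonicity $\rho(C^0_{1,\nu-1}) $ in $\nu$, or rather directly that $\rho(C^0_{1,\nu-1}) \le \rho_{10}$ with the two equality cases. For $\nu = 4$ we have $C^0_{1,3} = P^2_{2;5} = P_{1,1;6}^{1,3}$... wait — more carefully, $C_{1;3}^0$ is the triangle with a pendant edge, which appears in Lemma~\ref{smith1}'s list only if its index is $2$; it is not, so its index exceeds $2$, and a direct computation (or the identity $\phi(H_{10}) = \phi(P_2 \cup C_{1,1;6}^{0,3})$ from \eqref{035} together with the divisibility machinery of Section~\ref{S3}) should identify $\rho(C^0_{1,3})$ with $\rho_{10}$. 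Indeed, to get the equalities ``$\nu = 4 \Leftrightarrow \rho = \rho_{10}$'' and ``$\nu = 5 \Leftrightarrow \rho = \rho_{11}$'', I would invoke the cospectrality relations hinted at in \eqref{035}: $\phi(H_{10}) = \phi(P_2 \cup C^{0,3}_{1,1;6})$ gives $\rho_{10} = \rho(C^{0,3}_{1,1;6})$, and one checks $C^{0,3}_{1,1;6}$ is itself reducible or directly that the relevant lollipop has this index; similarly for $H_{13}$ via $\phi(H_{13}) = \phi(P^1_{2;4} \cup C^0_{1;6})$. Actually the cleanest route: compute $\phi(C^0_{1,\nu-1})$ via Schwenk's formula \eqref{schwenky1} applied at the pendant vertex, obtaining $\phi(C^0_{1,\nu-1}) = \lambda\,\phi(C_{\nu-1}) - \phi(P_{\nu-2})$, and then use Lemma~\ref{path} to evaluate at candidate points; checking $\phi(C^0_{1,\nu-1}, \rho_{10}) $ and the sign pattern for larger $\nu$ gives the monotone decrease.

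For the lower bound $\rho(C^0_{1,\nu-1}) > {\mathfrak h}$: by Lemma~\ref{smith}, Lemma~\ref{smith1} and Lemma~\ref{bro}, the graphs with index at most ${\mathfrak h}$ are explicitly listed and consist only of paths, certain small trees, cycles, and T-/H-shape trees — no lollipop $C^0_{1,\nu-1}$ with $\nu \ge 4$ appears (cycles $C_n$ have index exactly $2$, but the pendant edge strictly raises the index above $2$ by Lemma~\ref{proper}, and no closed quipu sits in $\mathscr{G}_{(2,{\mathfrak h})} = \mathcal{T}\cup\mathcal{H}$). Since ${\mathfrak h}$ itself is not a graph eigenvalue (its minimal polynomial $\lambda^4 - 4\lambda^2 - 1$ has non-real roots, a fact already used in the proof of Proposition~\ref{index}), we conclude $\rho(C^0_{1,\nu-1}) > {\mathfrak h}$ strictly.

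The main obstacle I anticipate is the sliding/consolidation argument in the minimization step: one must be careful that the edge being subdivided genuinely lies on an internal path (so that Lemma~\ref{internalpath}(ii) applies and $\rho$ strictly drops) rather than on a pendant path (where subdivision would \emph{increase} $\rho$), and that the excluded family in Lemma~\ref{internalpath}(ii) never interferes — which holds here since that family is a family of trees while closed quipus are unicyclic. Organizing the transformations so that they terminate at exactly $C^0_{1,\nu-1}$ (rather than merely at ``some lollipop of possibly smaller order'') requires tracking that the vertex count is preserved at each step; moving an edge from the cycle to the pendant path preserves $\nu$, so this is fine, but it should be stated explicitly. The numerical equality cases are then routine given the data in \eqref{kekkaz} and the explicit small-order computations.
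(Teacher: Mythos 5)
Your overall architecture (reduce an arbitrary closed quipu to the lollipop by $\rho$-monotone moves; get the lower bound from Lemmas~\ref{smith}, \ref{smith1}, \ref{bro} plus the fact that $\mathfrak h$ is not a graph eigenvalue; pin down the equality cases by direct divisibility checks and monotonicity in $\nu$) is the same as the paper's, and the last two parts are essentially fine. But there is a genuine error in the reduction step: your move (b) goes in the wrong direction. You propose to ``move edges from the cycle into the pendant path, shrinking the girth toward $3$.'' That terminates at $C_{\nu-3;3}^0$, not at $C_{1;\nu-1}^0$, and each such move \emph{increases} the spectral radius rather than decreasing it: deleting a cycle edge's subdivision vertex is the reverse of an internal-path subdivision (so it raises $\rho$ by Lemma~\ref{internalpath}(ii) read backwards), and lengthening a pendant path also raises $\rho$ (Lemma~\ref{proper}, or Lemma~\ref{internalpath}(i)). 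A quick check confirms the danger: $\rho(C_{2;3}^0)>\rho(C_{1;4}^0)=\rho_{11}$, so among order-$5$ closed quipus the \emph{long-cycle} lollipop wins, consistent with the statement and opposite to where your procedure lands.

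The correct reduction is the one the paper borrows from Proposition~\ref{T8}: starting from a closed quipu $Q$ of order $\nu$, delete edges so that only a single pendant edge survives off the cycle (this passes to a proper subgraph, so $\rho$ strictly drops by Lemma~\ref{proper}), and then reinsert the deleted edges as subdivisions of the cycle, which is a closed internal path, so $\rho$ strictly drops again by Lemma~\ref{internalpath}(ii); the vertex count is restored and the terminal graph is exactly $C_{1;\nu-1}^0$. In short: shortest possible pendant path, longest possible cycle. With that correction the rest of your argument goes through; for the equality cases the paper simply verifies $C_{1;3}^0\mid H_{10}$ and $C_{1;4}^0\mid H_{11}$ with $\rho(C_{1;3}^0)=\rho_{10}$, $\rho(C_{1;4}^0)=\rho_{11}$, and then uses internal-path subdivision once more to get $\rho(C_{1;\nu-1}^0)<\rho_{11}<\rho_{10}$ for all $\nu>5$; your detour through $\phi(H_{10})=\phi(P_2\cup C_{1,1;6}^{0,3})$ identifies $\rho_{10}$ with $\rho(C_{1,1;6}^{0,3})$, not with $\rho(C_{1;3}^0)$, so you would still need the direct check.
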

\begin{proof}
For the first conclusion, one can use Lemmas~\ref{proper} and~\ref{internalpath}, precisely as done in the proof of Proposition~\ref{T8}; therefore, there is no need to write down the details.

The quickest way to deduce ${\mathfrak h}  < \rho(C_{1,\nu-1}^0)$ is  to look at Lemmas~\ref{smith}, \ref{smith1}, and~\ref{bro}, checking that the set ${\mathscr G}_{<2} \cup {\mathscr G}_{2} \cup {\mathscr G}_{(2,{\mathfrak h})}$ does not contain closed quipus (recall that no graph has $\mathfrak h$ is its spectrum).

For the remaining part of the statement, a direct check shows that $C_{1,3}^0 \mid H_{10}$ and $C_{1,4}^0 \mid H_{11}$; furthermore, $\rho(C_{1,3}^0)=\rho_{10}$ and $\rho(C_{1,4}^0)=\rho_{11}$. Thus, Lemma~\ref{internalpath} implies that
\[\rho(C_{1,\nu-1}^0) < \rho(C_{1,4}^0)= \rho_{11} < \rho(C_{1,3}^0) = \rho_{10} \qquad \text{for all $\nu>5$}. \qedhere \]
\end{proof}

\begin{prop}\label{CP}
Let $\mathcal C \mathcal Q_{2s}$ the set of all closed quipus with girth $2s$. If there exists a graph $G \in \mathcal C \mathcal Q_{2s}$ such that $\rho(G)=
\rho_n$ for $n \geqslant 12$ and $n \neq 13$, then, $s\geqslant 4$.
\end{prop}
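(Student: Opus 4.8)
The plan is to argue by contradiction: suppose $G \in \mathcal{CQ}_{2s}$ has $\rho(G) = \rho_n$ with $n \geqslant 12$, $n\neq 13$, and $s \leqslant 3$, and derive an inequality on $\rho_n$ that is incompatible with the values recorded in \eqref{kekkaz} together with the monotonicity in Lemma~\ref{T22m}(ii). Since the girth $2s$ of $G$ is even (closed quipus here have cycle length $2s$), the only cases to rule out are $s=2$ (girth $4$) and $s=3$ (girth $6$). For each such $G$ we want to exhibit a subgraph whose spectral radius we can control, and then push it below $\rho(C^0_{1,\nu-1})$-type extremal configurations or simply below the numerical thresholds in \eqref{kekkaz}.

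First I would dispose of $s=2$. A closed quipu with girth $4$ contains $C_4$, so by Lemma~\ref{proper} (applied since $C_4 \subset G$ as $G$ has at least one pendant path, being cospectral to a tree — actually $G$ need only be a proper supergraph of $C_4$) we would get $\rho(G) > \rho(C_4) = 2$. More to the point, such a $G$ contains as a subgraph one of the minimal quadrangle-containing quipus, and I would use Lemma~\ref{internalpath} to compare $\rho(G)$ with $\rho(C^0_{1,\nu-1})$ for the relevant order, invoking Proposition~\ref{C1g0}: every closed quipu of order $\nu$ has spectral radius at least $\rho(C^0_{1,\nu-1})$, and among girth-$4$ ones the smallest index is attained at the lollipop $C^0_{1,\nu-1}$ with girth... wait — $C^0_{1,\nu-1}$ has girth $3$, not $4$. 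So for girth $4$ I instead compare against the lollipop $C^0_{2,\nu-2}$ (or the smallest girth-$4$ closed quipu) and observe, via Lemma~\ref{internalpath}(ii), that subdividing cycle edges decreases the index, so $\rho(G) \geqslant \rho$ of some small fixed girth-$4$ quipu, which exceeds $3/\sqrt 2 > \rho_n$ for $n\geqslant 12$ (using Proposition~\ref{index}). The cleanest route: a girth-$4$ closed quipu with a pendant path of length $\geqslant 1$ contains $C^{0}_{1;4}$-type graphs whose index I can bound below by $\rho(P^2_{2;5})$ or similar, which by Lemma~\ref{smith1} equals $2$, and then a finer comparison forces $\rho(G) \geqslant 3/\sqrt 2$. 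I would tabulate the handful of smallest girth-$4$ closed quipus explicitly and check their indices numerically.

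For $s=3$ (girth $6$), the same philosophy applies but the estimate is tighter, which is why the statement only excludes $n=13$ on the "good" side. A closed quipu of girth $6$ contains $C_6$ and at least one pendant path; the relevant extremal object is $C^0_{1,\nu-1}$ restricted to girth $6$, i.e.\ the smallest such quipu, whose index I would compute and compare with $\rho_{13}$. Since $\rho(H_{13}) \approx 2.10100$ is relatively large, a girth-$6$ quipu can match it, which is exactly why $n=13$ is excluded; but for $n = 12$ and for $n \geqslant 14$ we have $\rho_n < \rho_{12} \approx 2.11491$ or $\rho_n \leqslant \rho_{14} \approx 2.09118$, and I would show the smallest girth-$6$ closed quipu has index $> 2.11491$ or, for larger $n$, that every girth-$6$ closed quipu of the appropriate order has index bounded below by something exceeding $\rho_n$. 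Concretely, I expect to use that subdividing a pendant path of a girth-$6$ closed quipu only marginally lowers the index (by Lemma~\ref{internalpath}(i) it actually raises it when the edge is not internal), so the infimum over girth-$6$ closed quipus of unbounded order is approached from above and stays above $2.0839\ldots$; combined with $\rho_n \to $ something below that for large $n$, only finitely many $n$ survive, and those are handled by direct numerical comparison against \eqref{kekkaz}.

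The main obstacle I anticipate is the $s=3$, $n=12$ and $n=14,15$ borderline: here the numerical gap between $\rho_n$ and the smallest girth-$6$ closed-quipu index is small, so I would need a careful lower bound for $\rho$ of the relevant lollipop-like graph — most safely via an explicit equitable-partition / quotient-matrix computation or via Lemma~\ref{internalpath} starting from a small fixed graph whose characteristic polynomial I factor by hand — rather than a soft argument. A secondary subtlety is making sure the subgraph relation "$G$ contains $C_{2s}$ plus a pendant edge" is legitimate for every $G\in\mathcal{CQ}_{2s}$ realizing $\rho(G)=\rho_n$: since $\rho_n < 3/\sqrt 2$, Lemma~\ref{woo} places $G$ among open/closed quipus and daggers, and $G$ being a closed quipu with $\rho(G)=\rho_n > {\mathfrak h}$ forces it to have at least one pendant path (a bare cycle $C_{2s}$ has index exactly $2 < {\mathfrak h}$), which is what we need to invoke Lemma~\ref{proper} and Proposition~\ref{C1g0}.
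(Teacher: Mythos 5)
Your overall plan --- rule out girth $4$ and girth $6$ by exhibiting small subgraphs whose indices exceed $\rho_n$, then compare with the numerical values in \eqref{kekkaz} --- is the same as the paper's, and your girth-$4$ argument is essentially right: every $G\in\mathcal C\mathcal Q_4$ contains $C^0_{1;4}$, whose index is $\rho_{11}\approx 2.13578>3/\sqrt2>\rho_n$ for $n\geqslant 12$ by Proposition~\ref{index}. The genuine gap is in the girth-$6$ case at $n=12$. You propose to ``show the smallest girth-$6$ closed quipu has index $>2.11491$'', but the smallest girth-$6$ closed quipu is the lollipop $C^0_{1;6}$, and its index is $\rho_{13}\approx 2.10100$, which is \emph{below} $\rho_{12}\approx 2.11491$. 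So no careful lower bound on that lollipop (equitable partitions, hand factorization, etc.) can close this case: the danger is not that $C^0_{1;6}$ itself matches $\rho_{12}$ (it cannot, since $\rho_{13}\neq\rho_{12}$), but that some \emph{larger} girth-$6$ closed quipu, whose index is merely known to exceed $\rho_{13}$, lands exactly on $\rho_{12}$. Subgraph monotonicity alone gives $\rho(G)>\rho_{13}$ for $G\in\mathcal C\mathcal Q_6\setminus\{C^0_{1;6}\}$, and that does not exclude $\rho(G)=\rho_{12}$.

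The missing idea is a ``second layer'' of minimal subgraphs: every girth-$6$ closed quipu other than $C^0_{1;6}$ has either a pendant path of length at least $2$ or two pendant paths attached at distance $1$, $2$ or $3$ on the hexagon, hence contains one of $C^0_{2;6}$, $C^{0,1}_{1,1;6}$, $C^{0,2}_{1,1;6}$, $C^{0,3}_{1,1;6}$; a direct check shows all four of these have index greater than $2.13578>\rho_{12}$. This creates a gap in the set of attainable indices of girth-$6$ closed quipus --- nothing lies strictly between $\rho_{13}$ and $2.13578$ --- and that gap is exactly what kills $n=12$. Your proposal has the right tabulation instinct for girth $4$ but never identifies these four graphs for girth $6$, so the $n=12$ case remains open; your fallback heuristics (``the infimum is approached from above and stays above $2.0839$'') are also muddled, since the infimum is attained at $C^0_{1;6}$ with value $\approx 2.101$. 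For $n\geqslant 14$ your argument does work as intended: $\rho_n\leqslant\rho_{14}\approx 2.09118<\rho(C^0_{1;6})\leqslant\rho(G)$ for every girth-$6$ closed quipu $G$.
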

\begin{proof} We show that no graphs in $\mathcal C \mathcal Q_{4} \cup \mathcal C \mathcal Q_{6}$ satisfy the condition of the statement.
Since every $G \in \mathcal C \mathcal Q_{4}$ contains $C_{1;4}^0$ among its subgraphs, by interlacing and Lemma~\ref{internalpath} we obtain
\[ \rho(G) \geqslant \rho(C_{1;4}^0) = \rho_{11} >
\rho_n \quad \text{for $n \geqslant 12$}.\]
A direct check shows that $C_{1;6}^0 \mid H_{13}$ and $\rho(C_{1;6}^0)=
\rho_{13}$ (the latter result also comes from the subsequent Proposition~\ref{C01P}).
Therefore, for every $G \in \mathcal C \mathcal Q_{6} \setminus \{  C_{1;6}^0 \}$, we have
\[ \rho(G) > \rho(C_{1;6}^0)=
\rho_{13} >
\rho_n \quad \text{for $n > 13$}.\]
Moreover, among the subgraphs of $G$ we find at least one graph in the set
\[\left\{C_{2;6}^0, \, C_{1,1;6}^{0,1}, \, C_{1,1;6}^{0,2}, \,C_{1,1;6}^{0,3}\right\}.\] Hence,
$ \rho(G) \geqslant \min \left\{\rho(C_{2;6}^0), \, \rho(C_{1,1;6}^{0,1}), \, \rho(C_{1,1;6}^{0,2}),\, \rho(C_{1,1;6}^{0,3})\right\} >2.13578 >
\rho_{12} \approx 2.11491.$
\end{proof}

\begin{lem}\label{P1-P2}{\rm \cite[Lemma 4.2]{wjf}}
Let $(r_1,r_2)$ and $(s_1,s_2)$ two pairs of nonnegative integers such that $r_1 \leqslant r_2$, $r_1 < s_1 \leqslant s_2$ and $r_1+r_2 = s_1+s_2$.
Then,
$$\phi(P_{r_1})\phi(P_{r_2}) - \phi(P_{s_1})\phi(P_{s_2}) =
-\phi(P_{s_1-r_1-1})\phi(P_{s_2-r_1-1}).$$

\end{lem}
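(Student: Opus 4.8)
The plan is to prove the identity by means of the substitution $\lambda = 2\cos\theta$, which converts the characteristic polynomial of a path into an elementary trigonometric expression. Solving the three-term recurrence \eqref{path-poly} with initial data $\phi(P_0)=1$ and $\phi(P_1)=\lambda$, one checks by induction that $\phi(P_n, 2\cos\theta) = \dfrac{\sin((n+1)\theta)}{\sin\theta}$ for every $n \geqslant 0$ and every $\theta$ with $\sin\theta \neq 0$ (this is the closed form underlying the product expansion in Lemma~\ref{path}; the base cases $n=0,1$ are immediate and both sides obey $f_n = 2\cos\theta\, f_{n-1} - f_{n-2}$). Since two polynomials in $\lambda$ that agree on the infinite set $\{\,2\cos\theta \mid 0<\theta<\pi\,\}$ must coincide, it suffices to verify the claimed identity after this substitution.

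By the product-to-sum formula, $2\sin^2\theta\cdot\phi(P_{r_1})\phi(P_{r_2}) = \cos((r_2-r_1)\theta) - \cos((r_1+r_2+2)\theta)$, and the analogous equality holds for the pair $(s_1,s_2)$. Here is where the hypothesis $r_1+r_2 = s_1+s_2$ enters: the two ``high-frequency'' cosines cancel upon subtraction, leaving $2\sin^2\theta\bigl(\phi(P_{r_1})\phi(P_{r_2}) - \phi(P_{s_1})\phi(P_{s_2})\bigr) = \cos((r_2-r_1)\theta) - \cos((s_2-s_1)\theta)$. Applying the sum-to-product formula to the right-hand side and using the relations $(r_2-r_1)+(s_2-s_1) = 2(s_2-r_1)$ and $(r_2-r_1)-(s_2-s_1) = 2(s_1-r_1)$ — both immediate from $r_1+r_2=s_1+s_2$ — yields $\cos((r_2-r_1)\theta) - \cos((s_2-s_1)\theta) = -2\sin((s_2-r_1)\theta)\sin((s_1-r_1)\theta)$. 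Dividing by $2\sin^2\theta$ and recognizing $\dfrac{\sin((s_1-r_1)\theta)}{\sin\theta} = \phi(P_{s_1-r_1-1},2\cos\theta)$ and $\dfrac{\sin((s_2-r_1)\theta)}{\sin\theta} = \phi(P_{s_2-r_1-1},2\cos\theta)$ gives exactly the asserted identity. Note that the chain $r_1 < s_1 \leqslant s_2$ guarantees $s_1-r_1-1 \geqslant 0$ and $s_2-r_1-1 \geqslant 0$, so both path orders on the right-hand side are legitimate and $\phi(P_0)=1$ covers the extremal cases.

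There is essentially no obstacle; the only point needing a word of care is the passage from a trigonometric identity to a polynomial identity, which the density argument above handles. An alternative, purely algebraic route would be an induction on $r_1$, simultaneously decreasing $s_1$ and increasing $s_2$ so as to preserve the common sum, with base case $r_1=0$ — where the identity reduces to the addition formula $\phi(P_{s_1+s_2}) = \phi(P_{s_1})\phi(P_{s_2}) - \phi(P_{s_1-1})\phi(P_{s_2-1})$, itself a routine consequence of \eqref{path-poly} (indeed a special case of Lemma~\ref{mink}(i)). This works but involves more bookkeeping with the recurrence, so I would keep the trigonometric proof as the primary one.
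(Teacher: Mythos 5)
Your proof is correct. Note that the paper itself offers no argument for this lemma: it is imported verbatim from \cite[Lemma 4.2]{wjf}, so there is no in-paper proof to compare against. Your trigonometric route is a clean, self-contained verification: the closed form $\phi(P_n,2\cos\theta)=\sin((n+1)\theta)/\sin\theta$ is exactly the Chebyshev identity underlying Lemma~\ref{path}, the product-to-sum step correctly isolates the cancellation forced by $r_1+r_2=s_1+s_2$, the arithmetic $(r_2-r_1)\pm(s_2-s_1)=2(s_2-r_1),\,2(s_1-r_1)$ checks out, and the hypotheses $r_1<s_1\leqslant s_2$ guarantee that both indices $s_1-r_1-1$ and $s_2-r_1-1$ on the right-hand side are nonnegative, with $\phi(P_0)=1$ covering the boundary case. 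The passage from an identity on $\lambda\in(-2,2)$ to a polynomial identity is handled properly by the density remark. Your sketched alternative (induction on $r_1$ with base case the addition formula $\phi(P_{s_1+s_2})=\phi(P_{s_1})\phi(P_{s_2})-\phi(P_{s_1-1})\phi(P_{s_2-1})$, itself a specialization of Lemma~\ref{mink}(i)) is closer in spirit to how such identities are usually derived in the cited source, but the trigonometric version you chose as primary is shorter and less error-prone; either would be acceptable.
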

From Lemma~\ref{P1-P2} it follows in particular
that \begin{equation}\label{pippi} \phi(P_{k-5})\phi(P_{k-2}) = \phi(P_{k-4})\phi(P_{k-3})  -\lambda
\quad \mbox{and} \quad \phi(P_{k-6})\phi(P_{k-4}) = \phi^2(P_{k-5})
 -1.
\end{equation}
for all $k\geqslant 5$.

Along the proof of Proposition~\ref{CP} we noted that  $\rho(C_{1;4}^{0})=\rho_{11}$  and  $\rho(C_{1;6}^{0})=\rho_{13}$. Next Proposition shows that these are peculiar cases of a more general phenomenon.
\begin{prop}\label{C01P}
For $s \geqslant 2$ and $k \geqslant 5$,  $\rho(C_{1;2s}^{0})=\rho_{2k+1}$ if and only if $s = k-3$.
\end{prop}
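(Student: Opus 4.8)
The plan is to isolate the ``if'' direction, namely the assertion $\rho(C_{1;2s}^{0})=\rho_{2s+7}$ for every $s\geqslant 2$, and then get the ``only if'' direction for free: if $\rho(C_{1;2s}^{0})=\rho_{2k+1}$, then $\rho_{2s+7}=\rho_{2k+1}$, and since $n\mapsto\rho_n$ is strictly decreasing by Lemma~\ref{T22m}(ii), this forces $2s+7=2k+1$, i.e.\ $s=k-3$.

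For the ``if'' direction, the cases $s=2$ and $s=3$ are already contained in the proofs of Propositions~\ref{C1g0} and~\ref{CP}, where $\rho(C_{1;4}^{0})=\rho_{11}$ and $\rho(C_{1;6}^{0})=\rho_{13}$ are recorded; so I take $s\geqslant 4$ and set $k:=s+3\;(\geqslant 7)$ and $\rho:=\rho(C_{1;2s}^{0})$. By Proposition~\ref{C1g0} we have $\rho>\mathfrak h>2$, whereas every eigenvalue of $H_{2k+1}$ other than the index $\rho_{2k+1}$ is smaller than $\mathfrak h$, since $\lambda_2(H_{2k+1})<\mathfrak h$ by Proposition~\ref{second}. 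Hence it suffices to show that $\rho$ is a root of $\phi(H_{2k+1})$: being then an eigenvalue of $H_{2k+1}$ that exceeds $\lambda_2(H_{2k+1})$, it can only be $\rho_{2k+1}$.

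To verify $\phi(H_{2k+1},\rho)=0$ I would use the factorisation $\phi(H_{2k+1})=\phi(T_{k})\bigl(\phi(T_{k+1})-\phi(T_{k-1})\bigr)$ from \eqref{DKfact}, so that it is enough to prove $\phi(T_{s+4},\rho)=\phi(T_{s+2},\rho)$. The computation becomes transparent under the substitution $\lambda=\mu+\mu^{-1}$ with $\mu>1$ (legitimate because $\rho>2$): writing $c_m:=\mu^{m}+\mu^{-m}$ and recalling $\phi(P_m)=(\mu^{m+1}-\mu^{-m-1})/(\mu-\mu^{-1})$, $\phi(C_m)=\mu^{m}+\mu^{-m}-2$, a Schwenk expansion of $T_m$ at its unique vertex of degree $3$ (Lemma~\ref{formula1}) gives $\phi(T_m)=\phi(P_2)\bigl(c_{m-2}-\phi(P_{m-6})\bigr)$, whence $\phi(T_{s+4})-\phi(T_{s+2})=\phi(P_2)\,(c_{s+2}-c_{s}-c_{s-2})$, using $\phi(P_{s-2})-\phi(P_{s-4})=c_{s-2}$. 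On the other hand a Schwenk expansion of $C_{1;2s}^{0}$ at its pendant vertex gives $\phi(C_{1;2s}^{0})=\lambda\phi(C_{2s})-\phi(P_{2s-1})$, and after clearing denominators and cancelling the nonzero factor $\mu^{2s}-1$ the equation $\phi(C_{1;2s}^{0},\rho)=0$ reduces to $\mu^{2s}(\mu^{4}-\mu^{2}-1)=\mu^{4}+\mu^{2}-1$. This is exactly the relation that makes $c_{s+2}-c_{s}-c_{s-2}=\mu^{-s-2}\bigl(\mu^{2s}(\mu^{4}-\mu^{2}-1)-(\mu^{4}+\mu^{2}-1)\bigr)$ vanish at $\lambda=\rho$; hence $\phi(T_{s+4})-\phi(T_{s+2})$, and therefore $\phi(H_{2k+1})$, vanishes at $\rho$, as required.

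The real work — and the only genuinely non-routine step — is this last identification: recognising that the ``reduced'' polynomial equation characterising $\rho(C_{1;2s}^{0})$ coincides with the vanishing condition for $c_{s+2}-c_s-c_{s-2}$. Everything else is bookkeeping: the two Schwenk expansions, the closed form for $\phi(T_m)$, the path identities of Lemmas~\ref{path} and~\ref{P1-P2} (equivalently \eqref{pippi}), and the check that the auxiliary factors one divides by are nonzero at $\lambda=\rho$ — namely $\mu^{2s}-1$, which vanishes only at $\lambda=2$, and $\mu^{4}-\mu^{2}-1$, which vanishes only at $\lambda=\mathfrak h$, both strictly below $\rho$. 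One could alternatively avoid the Chebyshev variables and run the same reduction by induction on $s$ via \eqref{pippi}, but the substitution is what makes the coincidence visible at a glance.
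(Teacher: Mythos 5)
Your argument is correct, but the ``if'' direction follows a genuinely different route from the paper's. The paper stays at the level of polynomial identities in $\lambda$: iterating \eqref{schwenky2}, \eqref{path-poly} and \eqref{pippi}, it exhibits an explicit common factor, writing $\phi(C^0_{1;2k-6})=\phi(P_{k-4})\,q(\lambda)$ and $\phi(H_{2k+1})=\phi(P_2)\phi(T_k)\,q(\lambda)$ with $q(\lambda)=\phi(P_{k-1})-\lambda\phi(P_{k-4})-\phi(P_{k-3})+\lambda\phi(P_{k-6})$, and then identifies both indices with the largest root of $q$ because the complementary factors correspond to proper subgraphs ($P_{k-4}\subset C^0_{1;2k-6}$ and $P_2\subset T_k\subset H_{2k+1}$) and so have strictly smaller spectral radius by Lemma~\ref{proper}. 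You instead parametrize $\lambda=\mu+\mu^{-1}$, reduce $\phi(C^0_{1;2s},\rho)=0$ to $\mu^{2s}(\mu^{4}-\mu^{2}-1)=\mu^{4}+\mu^{2}-1$ after cancelling the factor $\mu^{2s}-1$ (legitimate since $\rho>2$), recognize this as the vanishing of your $c_{s+2}-c_{s}-c_{s-2}$ and hence of $\phi(T_{s+4})-\phi(T_{s+2})$, and conclude via \eqref{DKfact} that $\rho$ is a root of $\phi(H_{2s+7})$; the identification with the index then goes through $\rho>\mathfrak h>\lambda_2(H_{2s+7})$ (Propositions~\ref{C1g0} and~\ref{second}) rather than through the complementary factors. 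I checked the two computations you lean on --- the closed form $\phi(T_m)=\phi(P_2)\bigl(c_{m-2}-\phi(P_{m-6})\bigr)$ and the factorization of the reduced lollipop equation --- and both are right; restricting to $s\geqslant 4$ correctly keeps you in the range $k>6$ where \eqref{DKfact} takes its uniform form, with $s=2,3$ covered by the direct checks recorded in Propositions~\ref{C1g0} and~\ref{CP}. The ``only if'' step via the monotonicity of $n\mapsto\rho_n$ (Lemma~\ref{T22m}(ii)) is identical to the paper's. What your route buys is a transparent ``minimal equation'' for $\rho(C^0_{1;2s})$ in the variable $\mu$, which makes the coincidence of roots visible at a glance; what the paper's buys is an explicit common polynomial factor $q(\lambda)$ obtained entirely from the identities already set up in Section~3, with no change of variable.
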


\begin{proof}
Let $s=k-3$.
By suitably applying \eqref{schwenky2}, \eqref{path-poly} and  \eqref{pippi},
we obtain the following sequence of equalities:
\begin{align*}
\phi(C_{1;2k-6}^0) &= \lambda\phi(C_{2k-6}) - \phi(P_{2k-7})\\
&= \left(\lambda\phi(P_{2k-6}) - \phi(P_{2k-7})\right) - \lambda\phi(P_{2k-8}) - 2\lambda\\
&= \phi(P_{2k-5}) - \lambda\phi(P_{2k-8}) - 2\lambda \label{waffa} \\
&= \left(\phi(P_{k-4})\phi(P_{k-1}) - \phi(P_{k-5})\phi(P_{k-2})\right) -
   \lambda \left(\phi^2(P_{k-4}) - \phi^2(P_{k-5})\right) -2\lambda\\
&= \phi(P_{k-4})\phi(P_{k-1}) - \lambda\phi^2(P_{k-4}) -
   \phi(P_{k-4})\phi(P_{k-3}) + \lambda\phi(P_{k-6})\phi(P_{k-4})\\
&= \phi(P_{k-4})q(\lambda).
\end{align*}
where $q(\lambda)= \phi(P_{k-1}) - \lambda\phi(P_{k-4}) -
   \phi(P_{k-3}) + \lambda\phi(P_{k-6})$.

Through \eqref{schwenky2}, we similarly decompose $\phi(H_{2k+1})$:
\begin{align*}
\phi(H_{2k+1}) &= \phi(T_k)\left(\phi(T_{k+1})
    - \phi(T_{k-1})\right)\\[.2em]
&= \phi(T_k)\left(\phi(P_2)\phi(P_{k-1}) - \lambda\phi(P_2)\phi(P_{k-4})
    - \phi(P_2)\phi(P_{k-3}) + \lambda\phi(P_2)\phi(P_{k-6})\right)\\[.2em]
&= \phi(P_2)\phi(T_k)q(\lambda).
\end{align*}
So far, we have proved that $\phi (C_{1;2k-6}^0)=  \phi(P_{k-4})q(\lambda)$ and $\phi(H_{2k+1})=   \phi(P_2)\phi(T_k)q(\lambda)$.
We now claim that
$$ \rho (C_{1;2k-6}^0) = \rho_{2k+1}=q_1,$$
where $q_1$ is the largest root of $q(\lambda)$. In fact,
$$ \rho (C_{1;2k-6}^0) > \rho(\phi(P_{k-4})) \qquad \text{and} \qquad
\rho_{2k+1} >\rho(T_k)>\rho(P_2),$$
 since
$P_{k-4} \subset C_{1;2k-6}^0$ and $P_2 \subset T_k \subset H_{2k+1}$. Thus,
the `if' part of the statement is proved. \smallskip

The `only if' part follows easily from the `if' part and Lemma~\ref{T22m}(ii). In fact,
$$\rho(H_{2k'+1}) < \rho(C_{1;2s}^{0}) = \rho(H_{2s+7}) < \rho(H_{2k''+1}) $$
for $k'>s+3$ and $5 \leqslant k'' < s+3$.
\end{proof}

\section{Proof of Theorem \ref{main}}\label{S6}
We start by recalling a very useful lemma by Liu and Huang.
 \begin{lem}\label{LHuang} {\rm \cite[Equation (2) and Lemma 3.1]{liu_huang}}
Let $G$ be a graph cospectral to an H-shape graph with $n$ vertices. Then, the number $n_4(G)$ of quadrangles in $G$ is at most $1$. Moreover, the degree sequence of $G$ is $(1^2, 2^{n-2})$ if $n_4(G)=1$, whereas it
  belongs to $\{ (0^1, 1^1, 2^{n-3}, 3^1), (1^2, 2^{n-6},3^2) \}$ if $n_4(G)=0$.
\end{lem}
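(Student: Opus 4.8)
The plan is to pin down the degree sequence of $G$ from two spectral invariants---the size $\varepsilon_G$ and the number of closed walks of length $4$---and then to list all admissible possibilities by elementary arithmetic. Let $T$ be an H-shape tree on $n\geqslant 6$ vertices cospectral to $G$. From Fig.~\ref{Fig3} one sees at once that every H-shape tree has exactly four leaves, exactly two vertices of degree $3$, and all remaining vertices of degree $2$; hence its degree sequence is $(1^4,2^{n-6},3^2)$, so $\varepsilon_T=n-1$ and $T$ contains no quadrangle. By Proposition~\ref{invariant} we already get $\nu_G=n$ and $\varepsilon_G=n-1$ (and $G$ is bipartite, hence triangle-free, though this last fact will not actually be needed for the conclusion).

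The main step is the count of closed walks of length $4$. This number, which equals $\sum_{i}\lambda_i(G)^4$, is a spectral invariant (Proposition~\ref{invariant}(v)), and for an arbitrary graph an elementary argument gives
\[ \sum_{i}\lambda_i(G)^4 \;=\; 2\sum_{v\in V_G}d(v)^2-2\,\varepsilon_G+8\,n_4(G). \]
Evaluating the right-hand side for $T$, where $\sum_v d(v)^2=4+4(n-6)+18=4n-2$, $\varepsilon_T=n-1$ and $n_4(T)=0$, yields $\sum_i\lambda_i(T)^4=6n-2$; hence
\[ \sum_{v\in V_G}d(v)^2 \;=\; 4n-2-4\,n_4(G). \]
(When $n_4(G)=0$, the same identity also drops out of $M_2(G)=M_2(T)$ via Proposition~\ref{ahah}(i) and Lemma~\ref{Match}, but the closed-walk identity has the advantage of working uniformly in $n_4(G)$.)

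Next I would bound $\Delta_G$. If $\Delta_G=n-1$, then $G=K_{1,n-1}$ because $\varepsilon_G=n-1$; but $\eta(K_{1,n-1})=n-2$, whereas $T$ contains two disjoint edges, so $\eta(T)\leqslant n-4$, and $G$ cannot be cospectral to $T$. Thus $\Delta_G\leqslant n-2$. Among the $n-1$ non-negative integers $\{d(v):v\neq u\}$ ($u$ a vertex of maximum degree), whose sum is $2n-2-\Delta_G$, the sum of squares is minimized when the values are as equal as possible, which gives $\sum_{v\neq u}d(v)^2\geqslant 4n-4-3\Delta_G$. Combined with the previous display this forces $\Delta_G^2+4n-4-3\Delta_G\leqslant 4n-2$, i.e.\ $\Delta_G^2-3\Delta_G-2\leqslant 0$, so $\Delta_G\leqslant 3$. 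Writing $x_k$ for the number of vertices of $G$ of degree $k\in\{0,1,2,3\}$, the system $x_0+x_1+x_2+x_3=n$, $x_1+2x_2+3x_3=2n-2$, $x_1+4x_2+9x_3=4n-2-4n_4(G)$ solves to $x_1=3x_3+4n_4(G)-2$, $x_2=n-2n_4(G)-3x_3$ and, crucially, $x_0+x_3+2n_4(G)=2$. Since $x_0,x_3,n_4(G)$ are non-negative integers this forces $n_4(G)\leqslant 1$; moreover $n_4(G)=1$ gives $x_0=x_3=0$ and degree sequence $(1^2,2^{n-2})$, while $n_4(G)=0$ gives $(x_0,x_3)\in\{(0,2),(1,1)\}$ (the choice $(2,0)$ being excluded by $x_1\geqslant 0$), hence degree sequence $(1^4,2^{n-6},3^2)$ or $(0^1,1^1,2^{n-3},3^1)$, exactly as claimed.

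I expect the only genuinely non-formal ingredient to be the identity for the number of closed $4$-walks, together with its correct evaluation $\sum_i\lambda_i(T)^4=6n-2$ from the degree sequence of $T$; once this is in place, the degree bound and the final enumeration are purely mechanical. The one point requiring a little care is the degenerate case $\Delta_G=n-1$, which is disposed of by the nullity comparison above.
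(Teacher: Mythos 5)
Your derivation is correct. Note first that the paper offers no proof of this lemma at all --- it is imported verbatim from \cite{liu_huang} (``Equation (2) and Lemma 3.1'') --- so there is no internal argument to compare against; what you have produced is a self-contained proof of a cited result. The key identity $\sum_i\lambda_i(G)^4=2\sum_v d(v)^2-2\varepsilon_G+8\,n_4(G)$ is the standard closed-$4$-walk count, its evaluation $6n-2$ for an H-shape tree (degree sequence $(1^4,2^{n-6},3^2)$, so $\sum_v d(v)^2=4n-2$) is right, and your lower bound $\sum_{v\neq u}d(v)^2\geqslant 4n-4-3\Delta_G$ is valid for \emph{all} non-negative integer degrees, since it amounts to $(x-1)(x-2)\geqslant 0$ termwise; that also makes the separate disposal of $\Delta_G=n-1$ strictly unnecessary, though harmless. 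The linear system in $(x_0,x_1,x_2,x_3)$ and the resulting constraint $x_0+x_3+2n_4(G)=2$ are correct, and the enumeration (with $(x_0,x_3)=(2,0)$ killed by $x_1\geqslant 0$) yields exactly the claimed trichotomy. One point worth flagging: in the case $(x_0,x_3)=(0,2)$ you obtain $(1^4,2^{n-6},3^2)$, whereas the lemma as printed reads $(1^2,2^{n-6},3^2)$; the printed sequence accounts for only $n-2$ vertices and is a typo --- the paper itself uses $(3^2,2^{n-6},1^4)$ in Case 2.2 of the proof of Theorem~\ref{main} --- so your version is the intended one.
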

The trees $H_{10}, H_{13}$ and $H_{15}$ are not DS by \eqref{035}.
Thus, the graph of minimal order we need to consider is $H_{11}$. For this specific H-shape tree we state Proposition~\ref{main1}, whose proof is posponed
to Appendix B.
\begin{prop}\label{main1}
The tree $H_{11}$ is  DS.
\end{prop}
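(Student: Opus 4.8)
The goal is to show that $H_{11} = P_{2,2;7}^{2,4}$ has no cospectral mate. My plan is to fix a putative cospectral mate $G$ and gradually pin down its structure using the invariants collected in Section~\ref{S2} together with the list of known eigenvalue constraints we have already established for $H_{11}$. First I would record the relevant numerical data for $H_{11}$: it has $n=11$ vertices, $\varepsilon=10$ edges, is bipartite (being a tree), has no quadrangles or hexagons, and its characteristic polynomial $g_{11}=\phi(H_{11})=\lambda^{11}-10\lambda^9+34\lambda^7-48\lambda^5+29\lambda^3-6\lambda$ is displayed in \eqref{ast}. By Proposition~\ref{invariant}, any cospectral mate $G$ has $11$ vertices and $10$ edges, is bipartite, and shares all closed-walk counts with $H_{11}$; in particular $M_2(G)=M_2(H_{11})$ and, since $H_{11}$ has neither quadrangles nor hexagons and — by Lemma~\ref{LHuang} — $G$ can have at most one quadrangle, I would use Proposition~\ref{ahah} to get $M_3(G)=M_3(H_{11})$ whenever $G$ is quadrangle-free.

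\textbf{Step 1: rule out cycles in $G$.} Since $G$ has $11$ vertices and $10$ edges, $G$ is either a tree or a disjoint union of a tree-part and exactly one unicyclic component (counting components, a graph with $\nu=\varepsilon+1-c+ (\text{number of independent cycles})$, so $G$ connected forces a tree, and in general the cyclomatic number equals $c-1+(\text{\#cycles})$ giving at most one cycle overall). I would first treat the case where $G$ has a cycle $C_m$. Bipartiteness forces $m$ even. The spectral radius $\rho(H_{11}) = \frac{\sqrt{10+2\sqrt{17}}}{2} \approx 2.136$ is strictly between $2$ and $3/\sqrt2$ by Proposition~\ref{index} applied at $n=11$ (or by direct computation), so any component of $G$ containing a cycle has index in $(2,3\sqrt2/2]$, hence by Lemma~\ref{woo} is an open quipu, closed quipu, or dagger; having a cycle it must be a closed quipu. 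A closed quipu on at most $11$ vertices with even girth has girth $4$, $6$, $8$, or $10$; I would eliminate each possibility by combining (a) the lowest-term invariant $\vartheta(H_{11}) = -(2s+2)\lambda = -6\lambda$ from Proposition~\ref{DK-low} with $n=11=4\cdot2+3$, against the lowest terms of lollipops from Proposition~\ref{lollipop-low} and of small closed quipus, and (b) the index estimates: e.g. a girth-$4$ closed quipu on $\le 11$ vertices contains $C_{1;4}^0$, whose index equals $\rho_{11}$ by Proposition~\ref{C01P} (with $k=2$? — more carefully $\rho(C^0_{1;2s})=\rho_{2k+1}$ with $s=k-3$, so $C^0_{1;6}$ matches $\rho_{13}$, not $\rho_{11}$), so I would instead invoke Lemma~\ref{internalpath} and Lemma~\ref{proper} directly to see that any such quipu has index $\ne \rho_{11}$ unless it literally realizes the factorization, which a lowest-term or $\phi(\cdot,2)$ check then excludes. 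The cleanest uniform tool here is Lemma~\ref{mink4}: $\phi(H_{11},2)=9(11-15)=-36\ne 0$, so $2\notin\mathrm{sp}(H_{11})$, whereas $(\lambda-2)\mid\phi(C_m)$ for every $m\ge3$ — therefore $G$ contains no cycle at all. This single observation kills the unicyclic case outright.

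\textbf{Step 2: $G$ is a forest, identify its shape.} Now $G$ is a forest with $11$ vertices and $10$ edges, hence a tree. By Lemma~\ref{LHuang} (with $n_4(G)=0$, automatic since $G$ is a tree) the degree sequence of $G$ is either $(1^2,2^{9})$ — forcing $G=P_{11}$, impossible since $\phi(P_{11})\ne\phi(H_{11})$ by Lemma~\ref{pathHvD} or simply by comparing spectral radii ($\rho(P_{11})=2\cos(\pi/12)<2<\rho_{11}$) — or $(1^2,2^{5},3^{2})$, which means $G$ is an H-shape tree with $11$ vertices. So $G\in\bigsqcup_{i=1}^{12}\mathcal T_i$ on $11$ vertices. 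Here I would use Corollary~\ref{3match1}: $M_3$ takes the value $f(11)+j$ on $\mathcal T$-classes with $j$ determined by the class, and since $M_3(G)=M_3(H_{11})$ and $H_{11}=P_{2,2;7}^{2,4}\in\mathcal T_{10}$ gives $j=4$, we conclude $G\in\mathcal T_{10}\cup\mathcal T_{11}$. Finally Proposition~\ref{T8} (for $\mathcal T_{10}$) says $H_{11}$ uniquely minimizes the spectral radius in its order-$n$ slice of $\mathcal T_{10}$, so any $G\in\mathcal T_{10}$, $G\ne H_{11}$, has $\rho(G)>\rho_{11}$, contradiction; for $\mathcal T_{11}$ I would likewise argue via Lemma~\ref{proper}/Lemma~\ref{internalpath} that its members on $11$ vertices all have index strictly exceeding $\rho_{11}$, or, since there are only finitely many H-shape trees on $11$ vertices, simply enumerate them and check characteristic polynomials directly. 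Either way $G\cong H_{11}$, which is the claim.

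\textbf{Main obstacle.} The delicate point is the finite case-check in Step 1: a priori a closed quipu on $8$ or $10$ vertices with girth $8$ or $10$ could sneak through the coarse index bounds, and one must be sure the ``$2\notin\mathrm{sp}$'' argument really applies — it does, and it is decisive, so the real work is just making sure no unicyclic candidate with girth equal to its full vertex count (a pure cycle $C_{10}$ or $C_8$ plus isolated edges, which would be disconnected and hence not match the connectivity-insensitive spectrum unless $\phi$ factors accordingly) is overlooked; but again every cycle contributes the eigenvalue $2$, absent from $\mathrm{sp}(H_{11})$, so the obstruction dissolves. The only genuinely computational residue is verifying $\phi(P_{11})\ne\phi(H_{11})$ and handling $\mathcal T_{11}$ on $11$ vertices, both of which are short finite checks.
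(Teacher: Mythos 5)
Your Step~2 follows essentially the paper's route (degree sequence via Lemma~\ref{LHuang}, the $M_3$-classification of Corollary~\ref{3match1} to land in $\mathcal T_{10}\cup\mathcal T_{11}$, then a final elimination), but Step~1 contains a genuine gap that breaks the proof. The observation that $\phi(H_{11},2)\neq 0$ while $(\lambda-2)\mid\phi(C_m)$ only excludes components of $G$ that \emph{are} cycles, because $\phi(C_m)$ divides $\phi(G)$ precisely when $C_m$ is an entire connected component; a component that merely \emph{contains} a cycle (a lollipop or a closed quipu) need not have $2$ in its spectrum, since eigenvalues of subgraphs do not persist in supergraphs. This is not a removable technicality: $\phi(C_{1;4}^{0})=\lambda(\lambda^{4}-5\lambda^{2}+2)$ actually divides $\phi(H_{11})=\lambda(\lambda^2-1)^2(\lambda^2-3)(\lambda^4-5\lambda^2+2)$, and $\rho(C_{1;4}^{0})=\rho_{11}$ exactly, so a putative mate of the form $C_{1;4}^{0}\cup(\text{forest})$ passes every test you invoke in Step~1. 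This is precisely where the paper does its real work: Lemma~\ref{lem1} first restricts the possible acyclic components to $P_1$, $P_2$ or $P_5$ (using interlacing against $\lambda_2(H_{11})=\sqrt3$ and the fact that $P_3$, $P_4$ and $K_{1,3}$ do not divide $H_{11}$), which forces the unicyclic component to have $6$, $9$ or $10$ vertices, and then kills each admissible girth $4,6,8,10$ by exact index comparisons --- for instance, a girth-$4$ component on at least $6$ vertices properly contains $C_{1;4}^{0}$ and hence has index strictly greater than $\rho_{11}$. Without an argument of this kind your ``single observation'' does not dispose of the unicyclic case, which is the hardest part of the statement.

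Two smaller points. The reduction from ``$G$ is a tree'' to ``$G$ is an H-shape tree'' is cleaner via Lemma~\ref{lem1zoz} (a connected mate of an H-shape tree is itself an H-shape tree), though your degree-sequence route also works once connectivity is in hand; note, however, that the sequence $(1^2,2^{n-2})$ in Lemma~\ref{LHuang} is tied to $n_4(G)=1$ and so never arises for a tree. For the final $\mathcal T_{11}$ elimination the paper quotes the closed-walk count $cw_6$ from Liu--Huang together with Proposition~\ref{invariant}(v); your proposed index bound for $\mathcal T_{11}$ on $11$ vertices is asserted rather than proved, but your fallback of enumerating the finitely many H-shape trees on $11$ vertices would be an acceptable substitute.
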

In view of  \eqref{035} and Proposition \ref{main1}, from now on we can  assume $n \geqslant 12$ and $n \not\in \{13,15\}$. Our goal is to prove that, for $n$ in that range, $H_n$ is DS.

Let  $G$ be a graph such that $\phi(G) = \phi(H_n)$. Due to  Proposition~\ref{invariant}, $G$ is surely bipartite with $\nu_G = \varepsilon(G) + 1 = n (\geqslant 12)$. By Proposition~\ref{index} and~\ref{second} we infer that $G$  contains exactly one component out of $\mathscr{G} :=\mathscr{G}_{<2} \cup \mathscr{G}_2 \cup \mathscr{G}_{(2,{\mathfrak h})}$. By Lemmas~\ref{smith}--\ref{woo}, every component of $G$ is a tree or a unicyclic graph. This information, together with $\nu_G = \varepsilon_G + 1$, allows us to conclude that one component of $G$, say $T$, is tree, an the remaining ones (if any) are unicyclic.  In addition, no component of $G$ is a cycle, otherwise $\phi(C_k)$ should divide $\phi(G)$ for a suitable $k$, and this never occurs for $n \neq 15$ (see Proposition~\ref{divi4}).

By collecting all this information together, and recalling that $\mathscr{G}$ only contains trees and cycles, we see that there are just two possibilities: either $G=T$ or $G= T \cup CQ$, where $CQ$ is a closed quipu with $\rho(CQ)=\rho_n$. \medskip

{\it Case 1.} $G =T$.   From \cite[Lemma 3.1]{liu_huang}, surely $\Delta_T<4$. Thus, $G=T$ is an open quipu by Lemma~\ref{woo}.
Due to $\nu(T) = n$ and $M_2(T) = M_2(H_{11})$, by the first two parts of Lemma \ref{2match}(ii), $\Delta_T=3$ and $k_T=2$. In other words,  $T$ is an open quipu of type $P_{n_1,n_2,a}^{m_1,m_2}$. By Proposition~\ref{ahah}(ii), $M_3(T) = M_3(H_{11})$; therefore, $T$ belongs to ${\mathcal T}_{10} \cup {\mathcal T}_{11}$ by  Corollary~\ref{3match1}.\medskip

{\it Case 1.1.} $T \in \mathcal{T}_8$.  Proposition~\ref{T8} ensures that $T$ is necessarily equal to $H_{n}$. We shall see that neither of the subsequent cases can actually occur.\medskip

{\it Case 1.2:} $T \in \mathcal{T}_{11}$. In this case, $T=P_{n_{1},n_{2},a}^{1,2}$ for some $n_{1},n_{2} \geqslant 2$ and $a\geqslant 5$. Since $\nu_{T}\geqslant 12$, then $T'':=P_{2,2,5}^{1,2}$ is a proper subgraph of $T$. By using interlacing with respect a vertex of $T$ with degree $3$ , we realize that $\lambda_{2}(H_{11})=\lambda_{2}(T)<2$. Therefore $n \in \{12,14\}$ by Proposition~\ref{second}. After computing $\phi(T'')$ and $\phi(H_{11})$, we see that the spectral radius of both polynomials is the largest root of $\lambda^4-5\lambda^2+2$. Hence, by Lemmas~\ref{proper} and ~\ref{T22m}(ii), we arrive at
\[  \rho_{14} <\rho_{12} < \rho_{11}  = \rho (T'') <\rho(T) \in \{\rho_{12} , \rho_{14} \}, \]
which is contradictory. \medskip

{\it Case 2:} $G=T \cup CQ$ and $\rho(G) = \rho(CQ)$. We set $\nu_1=\nu_T$ and $\nu_2=\nu_{CQ}$. Clearly, $\nu_1+\nu_2=n$. Since $G$ is bipartite, $CQ$ has an even girth, and such girth $g$ is at least $8$ by  Proposition~\ref{CP}.
From Lemma~\ref{LHuang} we deduce that the degree sequence of $G$ is  either $(3^1, 2^{n-3},1^1,0^1)$ or   $(3^2, 2^{n-6},1^4)$.
Before analizying the two possible occurrences, we note that, in any case, $(M_2(G),M_3(G))=(M_2(H_n),M_3(H_n))$ by Proposition~\ref{ahah}(ii). As in Section~\ref{S2}, we set $f(n)= (n^3 - 12n^2 + 35)/6$. \medskip

{\it Case 2.1:} The degree sequence of $G$ is $(3^1, 2^{n-3},1^1,0^1)$. In this case   $G=P_1 \cup C^0_{\ell;n-\ell-1}$. With the aid of Propositions~\ref{Match}(ii), we compute
\[ M_3(G)=\begin{cases} f(n)+1 & \text{if $\ell=1,$}\\[0.2em]
									      f(n)+2 & \text{if $\ell\geqslant 2$.}
\end{cases}
\]
In any case, by Corollary~\ref{3match1},  $M_3(G) < M_3(H_n)=f(n)+4$, against the assumption of cospectrality on $G$ and $H_n$.\medskip

{\it Case 2.2:} The degree sequence of $G$ is $(3^2, 2^{n-6},1^4)$. In this case, the number $k_T$ of vertices of degree $3$ in $T$ is at most $1$. Unfortunately, no restriction on $T$ and $CQ$ comes from $M_2(G)=M_2(H_n)$; therefore we need to compare $M_3(G)$ and $M_3(H_n)$.

\medskip

{\it Case 2.2.1:} $k_T= 0$. The restrictions imposed by the degree sequence and Proposition~\ref{divi2} lead to
\begin{equation*}
G = P_{\nu_1} \cup CQ ,  \qquad \text{where} \quad CQ=C_{n_{1},n_{2};g}^{0, m}, \; \nu_1 \in \{2,5\},   \quad \text{and} \quad  g \geqslant 8.
\end{equation*}
 Let $e_{_{i,j}}$ denote the number of edges of type $uv$ with $d(u) = i$ and $d(v) = j$ in the graph $CQ$.
 Setting $e_{_{3,3}} = x$ and $e_{_{1,2}}=y$, it is clear that  $x \in \{0,1\}$ and $y \in \{0,1,2\}$.
It is not hard to prove that
$$e_{_{1,3}} = 2-y, \qquad e_{_{2,3}} = 4-2x+y \qquad \text{and} \qquad e_{_{2,2}} = \nu_2+x-y-6.$$
A counting argument, together with Parts~(ii) and~(iii) of Lemma~\ref{Match}, leads to
\begin{equation}\label{3Cmathch}
M_3(G) = \left\{\begin{array}{cc}
M_3(CQ) + M_2(CQ) = f(n)+x+y+3\phantom{nullnu} &\mbox{if $\nu_1 = 2$;}\\[1mm]
M_3(CQ) +
4M_2(CQ) + 3\nu_2 = f(n)+x+y+2 &\mbox{if $\nu_1 = 5$}.
\end{array}
\right.\end{equation}

{\it Case 2.2.1.1:}  $\nu_1 = 2$.  The equality $M_3(G)=M_3(H_n)=f(n)+4$ only holds for $(x,y) \in \left\{(1,0), (0,1) \right\}$.
If $(x,y) = (1,0)$, then $CQ = C_{1,1;n-4}^{0,1}$. In this case, $P_{1,1;8}^{3,4} \subset C_{1,1,g}^{0,1}$ and, by Lemma \ref{proper},
$$\rho_n= \rho(CQ) =\rho(C_{1,1,n-4}^{0,1}) > \rho(P_{1,1,8}^{3,4}) = 2.1267+ >\rho_{12} >\rho_n,$$ a contradiction.\medskip

If $(x,y)= (0,1)$, then $CQ \cong C_{1,t;2s}^{0,k}$, where $t >1$, $s\geqslant 4$ and $1 < k \leqslant s$. Yet, the equality
\[ \phi(H_n) = \phi(G) = \phi(P_2)\phi(C_{1,t,2s}^{0,k}) \] contradicts Proposition~\ref{divi5}.\medskip

{\it Case 2.2.1.2:} $\nu_1 = 5$. This time, the equality $M_3(G)=M_3(H_n)$ and \eqref{3Cmathch} both hold only if $(x,y) \in \left\{ (0,2),(1,1) \right\}$.

If $(x,y) = (0,2)$, then $CQ \cong C_{t_1,t_2,2s}^{0,k}$, where $1 < k \leqslant s$ and $t_1,t_2 > 1$.
Thus, the graph $CQ$ contains as a proper subgraph
$$K':= C_{2,2;8}^{0,4} \qquad \text{for $(k,s)=(4,8)$} \qquad \text{or} \qquad K'':= P_{2,2,k+5}^{2,k} \quad \text{otherwise}.
$$
Now, it turns out that $\rho(K') >2.17532 >\rho_n$; moreover, from $\nu_{K''} \leqslant n-5$, Lemma \ref{proper}  and  Lemma \ref{internalpath}(ii),
\[ \rho(K'') > \rho (H_{n-7}) >\rho(H_n).\]
In any case $\rho(G)>\rho_n$ against the initial assumption.

Suppose, now, $(x,y) = (1,1)$. In this case, then $CQ \cong C_{1,t;2s}^{0,1}$, where $s \geqslant 4$ and $t >1$. We note that $K''':=P_{1,2;6}^{2,3} \subset
CQ$. Since  $\phi(K''')= \lambda(\lambda^2-1)(\lambda^2-2)(\lambda^4-5\lambda^2+2)$, we find $\rho(K''')=\rho_{11}$ and, by Lemmas~\ref{proper} and~\ref{T22m}(ii),
$$\rho_n=\rho(G) = \rho(CQ) > \rho(K''') = \rho_{11}>\rho_{12} \geqslant \rho_n,$$ which  is impossible.\medskip

{\it Case 2.2.2:} $k_T= 1$. From $T \in \mathscr{G}$ it follows that $T$ is a T-shape tree of type
$P_{a;b+c+1}^{b}  $ where $a \leqslant b \leqslant c$ and $\rho(T) < \mathfrak h$. Thus,
\begin{equation*}
G = P_{a;b+c+1}^{b} \cup C_{\ell;
2s}^{0}.
\end{equation*}
We denote by $e_{_{i,j}}(T)$ (resp., $e_{_{i,j}}(CQ)$) the number of edges in $T$ (resp., $CQ$) of type $uv$ with
$d(u) = i$ and $d(v) = j$. Once we set $e_{1,2}(T) = x$ and  $e_{1,2}(CQ) = y$, we see that
\[\begin{array}{lllll}
e_{_{1,3}}(T) = 3-x, &&&       e_{_{1,3}}(CQ) = 1-y,              \\
e_{_{2,3}}(T) = x,    &&&  e_{_{2,3}}(CQ) = 2+y,\\
e_{_{2,2}}(T) = \nu_1-x-3, &&& e_{_{2,2}}(CQ) = \nu_2-y-3.
\end{array}
\]
Evidently,  $0 \leqslant x \leqslant 3$ and $0 \leqslant y \leqslant 1$. From Lemma~\ref{Match}, it follows that
\begin{align*}\label{3CCmathch}
M_3(G) &= M_3(T) +
M_3(CQ) + \nu_2M_2(T)+(\nu_1-1)M_2(CQ)\\
&= \frac{(\nu_1 + \nu_2)^3 - 12(\nu_1 + \nu_2)^2 + 35(\nu_1 + \nu_2)
+ 6(x+y)}{6}\\
&= f(n) + x+y.
\end{align*}
Since $M_3(H_n)=f(n)+4$, we arrive at $x+y=4$, which only occurs when  $(x,y)=(3,1)$. The subset of $\mathscr G$ of T-shape graphs with no pendant vertices adiacent to the vertex of degree $3$ is $\mathscr{P}:=\{P_{2;7}^3,\,
P_{2;c+3}^2 \mid c>1 \}$. Thus, $T \in \mathscr{P}$ and $CQ=C_{\ell,2s}^0$ with $\ell > 1$, since $y=1$.\smallskip

Now, surely $T\not=P_{2;7}^3$; in fact, $\phi(P_{2,7}^3) = \phi(P_3)(x^6-6x^4+8x^2-2)$, but $P_3$ does not divide $H_n$
by Proposition~\ref{divi2}. The only other possibility is
\begin{equation}\label{G4}
G = P_{2,c+3}^2 \cup C_{\ell, 2s}^{0}  \qquad \text{with $c>1$, $s \geqslant 4$ and $\ell>1$.}
\end{equation}

{\it Case 2.2.2.1:} the integer $n$ is even. From Proposition~\ref{DK-low} it follows that $\vartheta(G) =
\vartheta(H_n) \in \{ \pm 1\}$. Therefore, if \eqref{G4} holds, the lowest term of $T=P_{2,c+3}^2$ must be a nonzero constant. By Proposition~\ref{T22c-low} we deduce that $\vartheta(T) \in \{ \pm 1\}$ and $\nu_1$ is even. Now, by $n = \nu_1 + 2s + \ell$ we obtain that $\ell$ is also even; hence, by Proposition \ref{lollipop-low}, $\vartheta(CQ)$ should be
in $\{\pm 4\}$, whereas, from \eqref{G4}, $\vartheta(CQ)= \vartheta(G)/\vartheta(T)$ should be in $\{ \pm  1\}$, proving that this case cannot occur.\medskip

{\it Case 2.2.2.2:} the integer $n$ is odd. Let $n = 2k+1$. Since $\nu_1\geqslant 5$ and $\nu_2\geqslant 10$, and $n\neq 15$, then $k \geqslant 8$.   From $\phi(T) \mid \phi(H_n)$ and Proposition \ref{divi3} we obtain $c = k-5$ or, equivalently, $T = P_{2,k-2}^2$. It follows that $\nu_2= k+1$ and, consequently, $2s < k$.
In our hypotheses, $k \geq 8$ and $\ell>1$. Then, $2s<k<2k-6$ and $ C_{1, 2s}^{0} \subset C_{\ell,2s}^{0}$. Together with Lemma~\ref{internalpath}(ii) and Proposition~\ref{C01P} we arrive at

$$\rho(H_{2k+1})=\rho(C_{\ell, 2s}^{0}) > \rho(C_{1,2s}^{0}) > \rho(C_{1,2k-6}^{0}) = \rho(H_{2k+1}),$$ which is false.

The end of this long case analysis also marks the end of the proof. In fact, we have proved that if $G$ and $H_n$ are cospectral with $n \not\in \{10, 13, 15 \}$, then $G=H_n$.

\section{Conclusions}
In this paper we achieved several results concerning the divisibility of recursive graphs, focussing on the family of  H-shape trees  $H_n$  ($n \geqslant 10$) and studying which  of them is determined by its adjacency spectrum.
The investigation of divisibility in the realm of simple graphs could be useful to shed some light on a quite intriguing general problem, namely: {\it which monic polynomials with coefficients in $\mathbb Z$ are characteristic polynomials of simple graphs?}

Many necessary conditions are known. For instance, there exists a graph $G$ such that  $ \phi(G) = p(\lambda) =\lambda^n + \sum_{i=1}^n a_i\lambda^{i} \in {\mathbb Z}[\lambda]$ only if $a_1=0$, $-a_2$ is nonnegative and $-a_3$ is even and nonnegative (in fact, $a_1$, $-a_2$ and $-a_3$ respectively give the trace of $A(G)$, $\varepsilon_G$, and the numbers of the triangles in $G$ multiplied by $2$); moreover, all the roots of $p(\lambda)$ must be real (since $A(G)$ is symmetric).

Even taken together, all these conditions are far from being sufficient: the polynomial
\[ q(\lambda) = \lambda^6-6\lambda^2+7\lambda^2-2\]
is not the characteristic polynomial of a simple graph. Instead of running a program computing the characteristic
polynomial of all trangle-free simple graphs with order and size both equal to $6$, we can use Proposition \ref{main1}:
since $q(\lambda)\phi(P_5)=\phi(H_{11})$, (as a matter of fact, we used Propositions \ref{divi2} to determine $q(\lambda)$)  the existence of a graph $G$ such that $\phi(G)=q(\lambda)$ would be against the fact that $H_{11}$ is determined by its spectrum.

More generally, a polynomial $p(\lambda)$ cannot be  the characteristic polynomial of a simple graph if there exist
a pair of graphs $(G',G)$ such that $\phi(G')p(\lambda)=\phi(G)$ and $G$ is DS.

\section*{Acknowledgements}
The first two authors are supported by National Natural Science Foundation of China (No. 11971274).
The third and the fourth authors acknowledge the support of INDAM-GNSAGA.\\


\bigskip
\noindent {\bf \Large Appendices}
\appendix
\section{The lowest terms of some graphs}\label{SA}
From the equality $\phi(P_0)=1$ and Lemma \ref{path}, we easily obtain the following proposition.
\begin{prop}\label{veryzoz}
For each nonnegative integer $k$ and $\epsilon \in \{0,1,2,3\}$,
\begin{equation}\label{path-low} \scalemath{1}{\vartheta(P_{4k+\epsilon}) =
\begin{cases}
\phantom{-}1 & \text{if  $\epsilon =0$;} \\
\phantom{-}(2k+1)\lambda &  \text{if  $\epsilon =1$;} \\
-1, &  \text{if  $\epsilon =2$;}  \\
-2(k+1)\lambda  & \text{if  $\epsilon =3$.}
\end{cases}
}
\end{equation}
Moreover, for $k \in \mathbb N$ or $(k,\epsilon)=(0,3)$.
\begin{equation}\label{cycle-low} \scalemath{1}{
\vartheta(C_{4k+\epsilon}) =
\begin{cases}
-4k^2\lambda^2 & \text{if $\epsilon =0$};\\
-2 & \text{if $\epsilon\in \{1,3\}$};\\
-4  &  \text{if $\epsilon =2$}.
\end{cases}
}
\end{equation}
\end{prop}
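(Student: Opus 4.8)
The plan is to extract both families of lowest terms directly from the closed forms for $\phi(P_n)$ and $\phi(C_n)$ recorded in Lemma~\ref{path}, treating each residue class modulo~$4$ separately and, in every case, checking that the candidate monomial of smallest degree really has a non-zero coefficient.

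For the paths I would start from $\phi(P_n,\lambda)=\sum_{r=0}^{\lfloor n/2\rfloor}(-1)^r\binom{n-r}{r}\lambda^{n-2r}$ (which also yields $\phi(P_0)=1$). When $n=2m$ the smallest exponent occurring is $0$, attained at $r=m$ with coefficient $(-1)^m\binom{m}{m}=(-1)^m\neq0$; this gives $\vartheta(P_{4k})=1$ and $\vartheta(P_{4k+2})=-1$. When $n=2m+1$ the smallest exponent is $1$, attained at $r=m$ with coefficient $(-1)^m\binom{m+1}{m}=(-1)^m(m+1)\neq0$; this gives $\vartheta(P_{4k+1})=(2k+1)\lambda$ and $\vartheta(P_{4k+3})=-2(k+1)\lambda$.

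For the cycles the one genuine subtlety is the isolated summand $-2$ in $\phi(C_n,\lambda)=-2+\sum_{r=0}^{\lfloor n/2\rfloor}(-1)^r\frac{n}{n-r}\binom{n-r}{r}\lambda^{n-2r}$. If $n$ is odd, every exponent $n-2r$ in the sum is odd, hence at least $1$, so the constant term of $\phi(C_n)$ is exactly $-2$; this settles $\vartheta(C_{4k+1})=\vartheta(C_{4k+3})=-2$ and is the only place where $k=0$ is admissible (namely $C_3$). If $n=2m$, the sum contributes to degree $0$ the coefficient $(-1)^m\frac{2m}{m}\binom{m}{m}=2(-1)^m$, so the constant term of $\phi(C_n)$ is $2(-1)^m-2$: this equals $-4\neq0$ when $m$ is odd (i.e.\ $n=4k+2$), giving $\vartheta(C_{4k+2})=-4$, and it vanishes when $m$ is even (i.e.\ $n=4k$). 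In the latter case one passes to the degree-$2$ coefficient, which occurs at $r=m-1=2k-1$ and, using $\binom{2k+1}{2k-1}=\binom{2k+1}{2}=k(2k+1)$, equals $(-1)^{2k-1}\frac{4k}{2k+1}\,k(2k+1)=-4k^2$; since $k\geq1$ this is non-zero, whence $\vartheta(C_{4k})=-4k^2\lambda^2$.

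The step that most deserves care is exactly this last cancellation in the case $n\equiv0\bmod4$: one must be certain that once the two constant contributions annihilate each other the next surviving coefficient is non-zero, so that nothing of lower degree has been missed. Beyond that, the whole argument reduces to the elementary identities $\binom{m}{m}=1$, $\binom{m+1}{m}=m+1$ and $\binom{2k+1}{2k-1}=k(2k+1)$, so I anticipate no further obstacle.
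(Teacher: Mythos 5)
Your proposal is correct and follows exactly the route the paper intends: the paper derives Proposition~\ref{veryzoz} directly from $\phi(P_0)=1$ and the closed formulas of Lemma~\ref{path}, which is precisely your residue-class-by-residue-class read-off of the lowest nonvanishing coefficient. Your computations (including the cancellation of the constant term for $C_{4k}$ and the resulting $-4k^2\lambda^2$) all check out.
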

We now give
the proofs of Propositions~\ref{lollipop-low}, \ref{T22c-low} and  \ref{DK-low}. The technique is essentially the same for all of them: we just use one of the Schwenk formul\ae \
given in Lemma~\ref{formula1} together with Proposition~\ref{veryzoz}. Recall that a vertex $v$ is said to be {\it quasi-pendant} if $N_G(v)$ contains a pendant vertex.\smallskip

\noindent{\bf Proof of Proposition \ref{lollipop-low}}. For $n=4s+2\epsilon_1$ and $g=4t-2\epsilon_2$, let $u$ be the only vertex of degree $3$ in $G=C_{n,g}^0$, and let $v$ be the only vertex in $N_G(u)\setminus C_g$.
Applying \eqref{schwenky2} with respect to the edge $e = uv$, we obtain
$
\phi(C_{n,g}^0) = \phi(C_g)\phi(P_n) - \phi(P_{g-1})\phi(P_{n-1}).
$
Therefore, $\vartheta(C_{n,g}^0)$ is given by the lowest term of the polynomial
\begin{equation}\label{zoz1} \vartheta(C_g)\vartheta(P_n) -
\vartheta(P_{g-1})\vartheta(P_{n-1}).
\end{equation}
Now, the statement follows once we plug into \eqref{zoz1} the values of $\vartheta(C_g)$, $\vartheta(P_n)$,  $\vartheta(P_{g-1})$ and $\vartheta(P_{n-1})$ as read in the several  rows of
Table~\ref{TAB1}, each of which has been filled in with the aid of~\eqref{path-low} and~\eqref{cycle-low}.\hfill{$\Box$}\smallskip

\begin{table}[h]
  \centering
\scalebox{0.9}{\begin{tabular}{cccccccccc}
      \toprule
& && \\[-.45cm]
$(\epsilon_1, \epsilon_2)$&&& $\vartheta(C_g)$&&$\vartheta(P_n)$ && $\vartheta(P_{g-1})$ &&$\vartheta(P_{n-1})$  \\[.1cm]
\midrule
&&&&\\[-.3cm]
 $(0,0)$    &&& $-4t^2\lambda^2$  && $\phantom{-}1$ && $-2t\lambda$ &&  $-2s\lambda$\\[.15cm]
 $(0,1)$    &&& $-4$  && $\phantom{-}1$ && $(2t+1)\lambda$ &&  $-2s\lambda$\\[.15cm]
 $(1,0)$     &&& $-4t^2\lambda^2$  && $-1$ && $-2t\lambda$ &&  $(2s-1)\lambda$\\[.15cm]
 $(1,1)$  &&& $-4$  && $-1$ && $(2t+1)\lambda$ &&  $(2s-1)\lambda$\\ [.15cm]
 \bottomrule
 \hline
\end{tabular} }
\caption{\small Values needed to compute  $\vartheta(C_{4s-\epsilon_1,4t+\epsilon_2}^0)$}\label{TAB1}
\end{table}

\noindent{\bf Proof of Proposition \ref{T22c-low}}. Let $u$ be the only vertex with degree $3$ in $T_n$, and let $v$ be a quasi-pendant vertex in $N_{T_n}(u)$.
Applying the Schwenk formula~\eqref{schwenky2} with respect to the edge $uv$, we obtain $\phi(T_n) = (\lambda^2-1) \left(\phi(P_{n-2}) - \lambda\phi(P_{n-5})\right)$. Thus, $\vartheta(T_n)$ is equal to the lowest term of $ \lambda\vartheta(P_{n-5})-\vartheta(P_{n-2})$. Now, by Proposition~\ref{veryzoz},
\[ \scalemath{1}{ \lambda\vartheta(P_{n-5}) -\vartheta(P_{n-2} ) =
\begin{cases}
-2(s-1)\lambda^2 +1 & \text{if  $n =4s$,} \\
\phantom{-}(2s+1)\lambda &  \text{if  $n =4s+1$,} \\
\;\;\;(2s-1)\lambda^2-1 &  \text{if  $n=4s+2$,}  \\
-2(s+1)\lambda  & \text{if  $n=4s+3$,}
\end{cases}
}
\]
from which \eqref{tetateta} immediately follows.  \hfill{$\Box$}

\medskip
\noindent{\bf Proof of Proposition \ref{DK-low}}.
For $n=10$, Equation~\eqref{DKDK} holds  by a direct computation.  We assume now $n=4s+\epsilon > 10$. Let $u$ be one of the two  vertices with degree $3$ in $H_n$ and let $v$ be a quasi-pendant vertex in $N_{H_n}(u)$.
 Applying the Schwenk formula~\eqref{schwenky2} with respect to  the edge $uv$, we have
\begin{equation*}
\phi(H_n) = (\lambda^2-1) \left( \phi \left(T_{n-2}\right)-\lambda \phi \left(T_{n-5}\right) \right)
\end{equation*}
(for the definition of the several $T_n$'s see \eqref{tienne}).
Therefore, $\vartheta(G)$ is the lowest term of the polynomial
\[ p(\lambda):= \lambda \vartheta \left(T_{n-5}\right) -\vartheta \left(T_{n-2}\right) =
\begin{cases}
-2(s-1)\lambda^2 +1 & \text{if  $n =4s$;} \\
\phantom{-}(2s+1)\lambda &  \text{if  $n =4s+1$;} \\
\;\;\;(2s-1)\lambda^2-1 &  \text{if  $n=4s+2$;}  \\
-2(s+2)\lambda  & \text{if  $n=4s+3$.}
\end{cases}
\]
The polynomial $p(\lambda)$ has been computed with the aid of Proposition~{\ref{T22c-low}}. \hfill{$\Box$}

\section{The tree $H_{11}=P_{2,2,7}^{2,4}$ is DS}\label{SB}
\begin{lem}\label{lem1zoz}
If a connected graph $G$ is cospectral to an H-shape graph, then $G$ is itself an H-shape graph.
\end{lem}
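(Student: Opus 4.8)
The plan is to reduce the statement to a fact about the degree sequence of $G$, and then read off the structure of $G$ from the structural characterization of open quipus recalled in Section~\ref{S1}.

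First, I would extract the elementary consequences of cospectrality. Let $H$ be an H-shape tree with $\phi(G)=\phi(H)$ and set $n:=\nu_H$. By Proposition~\ref{invariant}(i) we have $\nu_G=n$ and $\varepsilon_G=n-1$; since a connected graph on $\nu_G$ vertices with $\nu_G-1$ edges is a tree, $G$ is a tree. In particular $G$ is acyclic, so it contains no quadrangle and $n_4(G)=0$.

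Then I would apply Lemma~\ref{LHuang}: with $n_4(G)=0$ the degree sequence of $G$ is either $(0^1,1^1,2^{n-3},3^1)$ or $(1^4,2^{n-6},3^2)$ (the latter written in the form consistent with $\nu_G=n$ that also appears in the proof of Theorem~\ref{main}). The first possibility is ruled out at once, since a vertex of degree $0$ would disconnect $G$, contrary to hypothesis (note $n\geqslant 10$, so $G$ has more than one vertex). Hence $G$ is a tree of maximum degree $3$ with exactly two vertices of degree $3$.

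Finally, I would conclude by the characterization of open quipus from \cite{woo} recalled in Section~\ref{S1}: a tree of maximum degree $3$ is an open quipu precisely when all its degree-$3$ vertices lie on a common path. Since $G$ has only two vertices of degree $3$, they trivially lie on the path joining them, so $G$ is an open quipu with $t=2$, i.e.\ an H-shape tree. I do not expect a genuine obstacle: once Lemma~\ref{LHuang} is available the argument is short bookkeeping, the only delicate point being the elimination of the degree sequence with an isolated vertex by connectedness. If one prefers not to invoke \cite{woo}, the last step can be carried out directly: removing the two degree-$3$ vertices of $G$ leaves a disjoint union of paths, and reattaching them exhibits $G$ as $P_{n_1,n_2;r}^{m_1,m_2}$ for suitable parameters.
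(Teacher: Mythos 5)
Your proof is correct, but it takes a different route from the paper's: the paper disposes of this lemma in one line by citing \cite[Lemma 3.3]{liu_huang} directly, whereas you derive it from Lemma~\ref{LHuang} (i.e.\ \cite[Eq.~(2) and Lemma~3.1]{liu_huang}), which is already restated in the paper, together with elementary facts. The gain of your route is self-containedness relative to the paper's own toolbox: connectedness plus $\varepsilon_G=\nu_G-1$ forces $G$ to be a tree, hence $n_4(G)=0$; the degree sequence with an isolated vertex is excluded by connectedness; and a tree whose degree sequence is $(1^4,2^{n-6},3^2)$ is forced to be an open quipu with $t=2$, i.e.\ an H-shape tree, since its two degree-$3$ vertices trivially lie on a common path (or, as you note, by the direct decomposition into the $u$--$v$ path and four pendant paths). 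You also correctly noticed that the second degree sequence in the paper's statement of Lemma~\ref{LHuang} is misprinted as $(1^2,2^{n-6},3^2)$ and should read $(1^4,2^{n-6},3^2)$, consistently with what is used in Case~2.2 of the proof of Theorem~\ref{main}; handling that explicitly is a point in favour of your write-up. The only trade-off is that your argument is slightly longer than the paper's bare citation, but it makes the logical dependence on \cite{liu_huang} narrower and more transparent.
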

\begin{proof} Immediate from  \cite[Lemma 3.3]{liu_huang}.
\end{proof}
\begin{lem}\label{lem1}
	If $\phi(G) = \phi(H_{11})$,  then $G$ is connected.
\end{lem}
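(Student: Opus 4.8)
The plan is to argue by contradiction, assuming $\phi(G)=\phi(H_{11})$ but $G$ disconnected.

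First I would fix the coarse shape of $G$. By Proposition~\ref{invariant}, $\nu_G=11$, $\varepsilon_G=10$, $G$ is bipartite and $\rho(G)=\rho_{11}$. Since $\rho_{11}>2$ by Proposition~\ref{index} while $\lambda_2(H_{11})<2$ by Proposition~\ref{second}, at most one component of $G$ can have index $\geqslant 2$; hence exactly one component $G_1$ has $\rho(G_1)=\rho_{11}$, and every other component has index $<2$, so by Lemma~\ref{smith} it belongs to $\mathscr{G}_{<2}$ and is a tree. Let $F$ denote the union of those other components; then $F$ is a nonempty forest, and comparing $\varepsilon_G=\nu_G-1$ with the edge count of a forest gives $\varepsilon_{G_1}\geqslant\nu_{G_1}$, so $G_1$ is not a tree and, being bipartite, contains an even cycle; in particular $\nu_{G_1}\geqslant 4$.

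Next I would exploit the divisibility $\phi(F)\mid\phi(H_{11})$. A direct computation (or Proposition~\ref{divi2} together with Lemma~\ref{path}) yields the factorisation into $\mathbb{Q}$-irreducibles
\[
\phi(H_{11})=\lambda\,(\lambda^2-1)^2\,(\lambda^2-3)\,(\lambda^4-5\lambda^2+2),
\]
in which $\lambda^4-5\lambda^2+2$ is irreducible and is the minimal polynomial of $\rho_{11}=\tfrac12\sqrt{10+2\sqrt{17}}$; consequently $\lambda^4-5\lambda^2+2$ divides $\phi(G_1)$ with multiplicity one. Among the graphs of $\mathscr{G}_{<2}$ only $P_1,P_2,P_5$ have characteristic polynomial dividing $\phi(H_{11})$ (for each of the others either some root, or the multiplicity of the eigenvalue $0$, fails to match), so $F$ is a disjoint union of copies of $P_1,P_2,P_5$. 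Hence $\phi(G_1)=(\lambda^4-5\lambda^2+2)\,\lambda^{a}(\lambda^2-1)^{b}(\lambda^2-3)^{c}$ with $\nu_{G_1}=4+a+2b+2c\leqslant 10$, where $(a,b,c)$ runs over the seven triples for which $\phi(H_{11})/\phi(G_1)$ is realised as such a product; this leaves seven candidates for $\phi(G_1)$ to be eliminated, and this elimination is the main obstacle.

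For the five candidates with $\nu_{G_1}$ even ($\nu_{G_1}\in\{4,6,8,10\}$), the decisive remark is that $\phi(G_1,0)=\pm\det A(G_1)$ and, $G_1$ being bipartite, $|\det A(G_1)|$ is a perfect square (possibly $0$); but in these cases $|\phi(G_1,0)|\in\{2,6\}$, which is impossible. For the two candidates with $\nu_{G_1}\in\{7,9\}$ the constant term vanishes and this remark is useless; here I would bring in Lemma~\ref{LHuang}, which (for $n=11$) restricts the degree sequence of $G$ to $(1^2,2^9)$, $(0^1,1^1,2^8,3^1)$, or $(1^4,2^5,3^2)$. When $\nu_{G_1}=7$ one gets $F=P_2\sqcup P_2$, so $G$ has four vertices of degree $1$; the only admissible degree sequence is then $(1^4,2^5,3^2)$, forcing $G_1$ to be a bipartite bicyclic graph on $7$ vertices with degree sequence $(2^5,3^2)$, necessarily the theta-graph $\theta_{2,2,4}$. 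When $\nu_{G_1}=9$ one gets $F=P_2$, and the same lemma together with $\varepsilon_{G_1}=9$ leaves only the possibility that $G_1$ has degree sequence $(2^9)$ — whence $G_1=C_9$, not bipartite — or $(1^2,2^5,3^2)$, in which case a short analysis (using, e.g., the count of closed walks of length $4$) shows $G_1$ is unicyclic with a hexagonal cycle, hence one of $C_{1,2;6}^{0,1}$, $C_{1,2;6}^{0,2}$, $C_{1,2;6}^{0,3}$, or $C_6$ with a cherry attached. In each remaining case the candidate $\phi(G_1)$ has $\sqrt{3}$ as a root (the factor $\lambda^2-3$), whereas a quick evaluation shows $\sqrt{3}$ is not an eigenvalue of the graph in question; this final contradiction shows that $G$ must be connected.
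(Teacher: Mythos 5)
Your proof is correct, but it takes a genuinely different route from the paper's. The paper argues structurally: it first shows that only one component $G_1$ can contain cycles, proves $G_1$ is unicyclic (no quadrangle, since $C_{1;4}^0$ already has index $\rho_{11}$, and two hexagons sharing edges would force a larger index), pins down the second component as one of $P_1,P_2,P_5$ by interlacing plus three divisibility checks, and then runs a case analysis on the girth of $G_1$ via spectral-radius comparisons with forbidden subgraphs ($C_{1;6}^0$, $C_{1,1;6}^{0,k}$, the set $\mathcal S$), ending with the observation that $\sqrt2\in{\rm sp}(C_{1,1;8}^{0,4})\setminus{\rm sp}(G)$. You instead organize everything around the factorisation of $\phi(H_{11})$ into $\mathbb Q$-irreducibles, enumerate the seven admissible forests $F$, and dispose of the five candidates with $\nu_{G_1}$ even by the arithmetic observation that a bipartite graph has $\lvert\det A\rvert$ a perfect square while $\lvert\phi(G_1,0)\rvert\in\{2,6\}$ --- a clean obstruction that appears nowhere in the paper and eliminates those cases with no graph enumeration at all. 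For the two odd cases you lean on Lemma~\ref{LHuang} to fix degree sequences (note that the paper's $(1^2,2^{n-6},3^2)$ is evidently a typo for $(1^4,2^{n-6},3^2)$, which you silently and correctly use), and you finish by checking that $\sqrt3$ is not an eigenvalue of $\theta_{2,2,4}$ nor of the four girth-$6$ candidates; I verified these evaluations (e.g.\ $\phi(\theta_{2,2,4},\sqrt3)=-8\sqrt3$, and the four unicyclic candidates give nonzero multiples of $\sqrt3$), and your enumerations of the bipartite bicyclic graph on $7$ vertices and of the girth-$6$ unicyclic graphs with degree sequence $(1^2,2^5,3^2)$ are complete, so the argument closes. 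The trade-off: the paper needs fewer explicit characteristic-polynomial computations but more subgraph/index comparisons, while your version rests on two finite verifications that you assert rather than carry out (that only $P_1,P_2,P_5$ among the graphs of $\mathscr G_{<2}$ divide $\phi(H_{11})$, and the $\sqrt3$-evaluations), both of which do hold. One small shortcut you missed: $\theta_{2,2,4}$ contains a quadrangle, so Lemma~\ref{LHuang} itself (the degree sequence $(1^4,2^5,3^2)$ forces $n_4(G)=0$) already excludes it without any eigenvalue computation.
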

\begin{proof} By a direct computation, \[
		\phi(H_{11}) =
\lambda		(\lambda-1)^2(\lambda+1)^2(\lambda^2-3) (\lambda^4-5\lambda^2+2).
	\]
	Thus, the two largest eigenvalues in ${\rm sp} (H_{11})$ are $\rho_{11}= (\sqrt{10+2\sqrt{17}})/2$ and $\lambda_2(H_{11}) = \sqrt{3}$.

 Let  $G_1, G_2, \dots,  G_i $ be the connected components of $G$ and suppose by contradiction that $i>1$. It is not restrictive  to assume
$\rho(G_1) \geqslant \rho(G_2) \geqslant \cdots \geqslant \rho(G_i)$.
 By Proposition \ref{invariant} we know that $G$ is bipartite with
$\nu_G = \varepsilon_G+ 1 = 11$. Then, $G$ only contains cycles with even girth not exceeding $10$. By interlacing (i.e.\ Lemma~\ref{interlace}) $\sqrt{3}=\lambda_2(G) \geqslant \lambda_1(G_2)$. Therefore, only $G_1$ contains cycles (otherwise $\rho(G_2)$ would be larger than $2$).
We first show that $G_1$ is necessarily unicyclic.
If $G_1$ contains two cycles, then $i\geqslant 3$ and only $G_i$ is possibly an isolated point, since $\eta(G)=1$. Therefore, $\nu_{G_1}\leqslant 8$.
Furthermore, none of cycles of $G_1$ can be a quadrangle, otherwise $C_{1;4}^0 \subset G_1$, implying $\rho_{11}= \rho(G_1)> \rho(C_{1;4}^0)=\rho_{11}$, which is false.
This means that the bipartite non-unicyclic $G_1$ should contain two exagons sharing $3$ or $4$ edges; yet, in both cases, its spectral radius would be larger than $\rho_{11}$.

So far, we have proved that if $G$ is not connected, then $G=G_1 \cup G_2$, where
$G_1$ is unicyclic and $G_2$ is a tree.
 Moreover, always by interlacing, $G_2$ is either the star $K_{1,3}$ or a path $P_r$ with $r \leqslant 5$. It follows that
$G_2$ belongs to $\{P_1, P_2, P_5\}$, since $P_3,P_4$ and $K_{1,3}$ do not divide $H_{11}$. Consequently $\nu_{G_1} \in \{6, 9, 10\}$.
Let now $\mathfrak g$ be the girth of $G_1$. We already noted that  $\mathfrak g \in \{4,6,8,10\}$.
\smallskip

{\it Case 1.} ${\mathfrak g} = 4$. This case does not occur since $G_1$,
which has at least six vertices, would properly contain $C_{1;4}^0$; and we have already explained above why it is not possible. \smallskip

{\it Case 2.} ${\mathfrak g} = 6$. Surely $\nu_{G_1} \in \{9, 10\}$, since $G_1\not=C_6$, whose index is $2$. Since $\lambda_1(C_{1;6}^0)= \rho_{11}$, by  Lemma~\ref{proper}  $C_{1;6}^0$ cannot be a proper subgraph of $G_1$.  It follows that $G_1$ should contain  $C_{1,1,6}^{0,k}$ for a suitable $ k \in \{1,2,3\}$.
Yet, this is impossible, since $\min \left\{ \lambda_1(C_{1,1,6}^{0,k}) \; \big\vert \; 1 \leqslant k \leqslant 3 \right\} > 2,17 > \rho_{11}$.\smallskip

	{\it Case 3.} ${\mathfrak g} \in \{8, 10\}$. The unicyclic graphs with order $9$ or $10$ and girth in $\{8,10\}$ are in the set
$$ {\mathcal S}= \{ C_{1;8}^0,  C_{2;8}^0, C_{10} \} \cup \{ C_{1,1;8}^{0,k} \mid 1 \leqslant k \leqslant 4 \}.$$
The only graph in $\mathcal S$ whose index is $\tilde{\rho}$ is $C_{1,1;8}^{0,4}$; yet, $G_1$ could not be equal to $C_{1,1;8}^{0,4}$, since
$\sqrt{2}$ belongs to ${\rm sp} (C_{1,1;8}^{0,4}) \setminus {\rm sp}(G)$; hence,  $C_{1,1;8}^{0,4}$ does not divide $G$.
\end{proof}

We now have at our disposal all tools to prove Proposition \ref{main1}. Let $cw_6(G)$ denote the number of closed walks of length $6$ in a graph $G$, and let
 $\widetilde{G}$ be a graph cospectral to $H_{11}$. Clearly, $\widetilde{G}$ has $11$ vertices like $H_{11}$, which is a graph of type $\mathcal T_{10}$ (see Fig.~\ref{Fig3}).
Lemmas~\ref{lem1zoz} and~\ref{lem1} ensure that $\widetilde{G}$ is an H-shape graph. From Proposition~\ref{ahah}(ii) and Corollary~\ref{3match1}, we see that $G \in {\mathcal T}_{10} \cup {\mathcal T}_{11}$.
In \cite[pp. 6-7]{liu_huang} it is shown that  $cw_6(G') \not= cw_6(G'')$ for every pair $(G',G'') \in {\mathcal T}_{10} \times   {\mathcal T}_{11}$ such that $\nu_{G'}=\nu_{G''}$. Thus, by Proposition~\ref{invariant}(v)  we deduce $\widetilde{G} \in {\mathcal T}_{10}$. The proof ends by  observing  that the only graph with $11$ vertices in ${\mathcal T}_{10}$ is  $H_{11}$. \hfill{$\Box$}

\end{document}